\theoremstyle{plain}%
\newtheorem{theorem}{Theorem}
\newtheorem{lemma}{Lemma}
\newtheorem{corollary}{Corollary}
\theoremstyle{remark}%
\newtheorem{remark}{Remark}%
\theoremstyle{definition}%
\newtheorem{definition}{Definition}%
\newcommand{\inn}[2]{\left\langle#1,\,#2\right\rangle}
\DeclareMathOperator{\ran}{ran}
\DeclareMathOperator{\spn}{span}
\newcommand{\grad}{\nabla}
\newcommand{\Lap}{\Delta}
\newcommand{\di}{\partial}
\newcommand{\si}{\sigma}
\newcommand{\eps}{\epsilon}
\newcommand{\ls}{\lesssim}
\newcommand{\g}{\gamma}
\newcommand{\al}{\alpha}
\newcommand{\Si}{\Sigma}
\renewcommand{\b}{\bar}
\newcommand{\Rb}{\mathbb{R}}
\newcommand{\Lc}{\mathcal{L}}
\newcommand{\W}{\mathcal{W}}
\newcommand{\Sc}{\mathcal{S}}
\newcommand{\Bc}{\mathcal{B}}
\newcommand{\Ac}{\mathcal{A}}
\newcommand{\om}{\omega}
\renewcommand{\l}{\lambda} 
\newcommand{\Sb}{\mathbb{S}} 
\renewcommand{\b}{\bar} 
\newcommand{\abs}[1]{\left\lvert#1\right\rvert}
\newcommand{\norm}[1]{\left\lVert#1\right\rVert}
\newcommand{\br}[1]{\left\langle#1\right\rangle}
\newcommand{\Set}[1]{\left\{#1\right\}}
\newcommand{\pd}[2]{\ensuremath{\tfrac{\partial#1}{\partial#2}}}
\newcommand{\md}[6]{\ensuremath{
		\ifinner
		\tfrac{\partial{^{#2}}#1}{\partial{#3^{#4}}\partial{#5^{#6}}}
		\else
		\tfrac{\partial{^{#2}}#1}{\partial{#3^{#4}}\partial{#5^{#6}}}
		\fi
}}
\newcommand{\del}[1]{\left(#1\right)}
\newcommand{\thmref}[1]{Theorem~\ref{#1}}
\newcommand{\defnref}[1]{Definition~\ref{#1}}
\newcommand{\secref}[1]{Section~\ref{#1}}
\newcommand{\lemref}[1]{Lemma~\ref{#1}}
\newcommand{\remref}[1]{Remark~\ref{#1}}
\newcommand{\corref}[1]{Corollary~\ref{#1}}
\begin{document}
	\title{On the Stability of Cylindrical Singularities of the Mean Curvature Flow}
	\author{Jingxuan Zhang
	}
	\maketitle
\begin{abstract}
	We study the rescaled mean curvature flow (MCF) of hypersurfaces that are
	global graphs over a fixed cylinder of arbitrary dimensions.
	We construct   an explicit stable manifold for the rescaled MCF
	of finite codimensions in a suitable
	configuration space. For any initial
	hypersurface from this stable manifold, we
	construct a unique global solution to the rescaled MCF, 
	and derive precise asymptotics for these solutions that are valid for all 
	time.
	Using these asymptotics, we prove asymptotic stability of 
	cylindrical singularities of arbitrary dimensions under 
	generic initial perturbations.
	
	As a by-product,
	for any flow of hypersurfaces evolving
	according to the MCF
	that enters this
	stable manifold at any time
	and first develops a singularity at
	a subsequent time,
	we give a simple proof of the uniqueness of tangent flow,
	first established by Colding and Minicozzi. 
	Moreover, in this case
	we show the unique singularity
	profile is determined by 
	the hypersurface profile
	when the flow enters the stable manifold.

	For all results in this paper, 
	there is no symmetry or solitonic assumption.
\end{abstract}

\section{Introduction}\label{sec:1}
	Consider the mean curvature flow (MCF) for a family of hypersurfaces given
	by immersions $X(\cdot, t):\Rb^{n-k}\times \Rb^{k+1} \to \Rb^{n+1}$, satisfying
	\begin{equation}\label{MCF}
		\di_t X=-H(X)\nu(X).
	\end{equation}
	In this paper, we are interested in the dynamical behaviour 
	of a solution $X$ to \eqref{MCF}, which first develops a  singularity
	at $0\in \Rb^{n+1},\,t=T>0$.

	Equation \eqref{MCF} is invariant under rotation, translation, and parabolic rescaling.
	Motivated by these symmetries, we consider the following time-dependent rescaling for a solution to \eqref{MCF} as follows:
	\begin{equation}
		\label{1.1}
		X(x,\om,t)=\lambda(t)g(t)Y(y(x,t),\om,\tau)+(0,z(t)),
	\end{equation}
	where the immersion $Y$ is defined through this relation, and 
	\begin{align}
		\label{1.2.1}
		&g(t)\in SO(n+1),\quad g(0,x')=(0,x') \text{ for every $x'\in\Rb^{k+1}$},\\
		\label{1.2.2}
		&z(t)\in\Rb^{k+1},\\
		\label{1.3}
		&\lambda(t):=\left(2\int_t^T a(t')\,dt'\right)^{1/2}, \quad a(t)>0,\\
		\label{1.4}
		&y(x,t):=\l(t)^{-1}x,\\
		\label{1.5}
		&\tau(t):=\int_0^t \lambda(t')^{-2}\,dt'.
	\end{align}
	Notice that we do not fix the parameter $\si:=(g,z,a)$, but rather regard
this as a path  to be determined in the manifold 
\begin{equation}\label{Si}
	\Si:= SO(n+1)\times \Rb^{k+1}\times \Rb_{>0}.
\end{equation}

	The condition \eqref{1.3} shows that $\lambda(t)$ is uniquely
	determined by the path $a(t)>0$. Indeed, this $\lambda(t)$ is the unique solution to the 
	Cauchy problem
	$$\dot\lambda(t)\lambda(t)=-a(t),\quad\l(T)=0.$$
	Here and in the remaining of this section, 
	the dot denotes differentiation w.r.t. the fast time $t$-variable. 
	By definition, the terminal condition on $\l$ ensures the rescaling
	\eqref{1.1} gives rise to a tangent flow $Y=Y(y,\om,\tau)$
	in the microscopic variable $y$ and slow time variable $\tau$.

	Direct computation shows $X$ of the form \eqref{1.1} solves \eqref{MCF} if and only if the pair  
	$(\si,Y)$ solves 
	\begin{equation}
		\label{1.6}
		\di_\tau Y=-H(Y)\nu(Y)-a\inn{y}{\grad_y} Y+a Y-g^{-1}\di_\tau g Y -\l^{-1} g^{-1}\di_\tau z.
	\end{equation}
	To get \eqref{1.6}, one uses the relations $\l\dot \l=-a$, $\l^2\dot y=ay$, and $\dot \tau=\l^{-2}$, which follow from \eqref{1.3}-\eqref{1.5}
	respectively.
	We call \eqref{1.6} \textit{the rescaled mean curvature flow.}
		

	The rescaled MCF \eqref{1.6} has the following family of stationary solutions:
	\begin{align}
		\label{1.7}
	&	Y_{a_0}\equiv \del{y,\sqrt{\frac{k}{a_0}}\om},\\
	\label{1.7.1}
	&\si_0\equiv (g_0,z_0,a_0)\in \Si.
	\end{align}
	Geometrically, the three components in $\si_0$ consist of
	a 
	rotation $g_0$ of the cylindrical axis, 
	a transversal translation $z_0$, 
	and a dilation by a factor of $\sqrt{k/a_0}$. 
	The pair $(\si,Y)$ corresponds to a cylinder 
	with unit radius along the $y$ axis, transformed 
	by the symmetry $\si_0$ as in \eqref{1.1}.

	\subsection{Main Results}

		In the remaining of this paper, 
	we seek maximal solution $X$ to \eqref{MCF} on the spatial domain $\Rb^{n-k}\times \Sb^k$ and the time interval $0\le t<T$, of the form
	\begin{equation}
		\label{1.8'}
		X(x,\om,t)=\l(t)g(t)\underbrace{\left(y(x,t),\del{\sqrt{\frac{k}{a(t)}}+\xi(y(x,t),\om,\tau(t))}\om \right)}_{\text{as $Y$ in \eqref{1.1}}}+(0,z(t)).
	\end{equation} 

	In terms of the blow-up variables $y,\,\tau$ from \eqref{1.4}-\eqref{1.5},
	this amounts to finding a pair $(\si,Y)$ that solves
	the rescaled MCF \eqref{1.6} for all $\tau\ge0$.
	Here $\si=\si(\tau)$ is a path of parameters. $Y=Y(\xi)$ is a flow
	of graphs over  a 
	fixed cylinder,  parametrized by a path
	of functions $\xi(\cdot,\tau):\Rb^{n-k}\times \Sb^k\to\Rb_{\ge0},\,\tau\ge0$,
	as in \eqref{1.8'}.

	With this convention as well as the rescaling \eqref{1.2.1}-\eqref{1.5} understood,
	the main result of this paper is the following assertions about the rescaled MCF \eqref{1.6}:
	\begin{theorem}
		\label{thm1}
		Let $X^s(a),\,s\ge2,\,a>0$ be the weighted Sobolev  space defined in \eqref{X}.
		There exists $0<\delta\ll1$ s.th. the following holds:
		\begin{enumerate}
			\item (Global existence)
		For every $a_0\ge1/2+2\delta$, there exists
		 a subspace 
		$\Sc\subset X^s(a_0)$ with finite codimensions, together with
		a map 
		$$\Phi:\Bc_\delta\cap \Sc\to X^s\equiv X^s(1/2),\quad \Bc_\delta,\Sc\text{ as in \defnref{defn3.2}},$$
		satisfying
		\begin{align}
		\label{1.9.1}
\norm{\Phi(\eta_0)}_s&\ls \norm{\eta_0}_{X^s}^2,\\
\label{1.9.2}
\norm{\Phi(\eta_0)-\Phi(\eta_1)}_{X^s}&\ls \delta
\norm{\eta_0-\eta_1}_{X^s},
		\end{align} for every $\eta_0,\,\eta_1\in \Bc_\delta\cap\Sc$,
		as well as the following properties: 
		
		For every $\eta_0\in \Bc_\delta\cap \Sc$, 
		there exists a unique global solution
		$(\si,Y)$ to the rescaled MCF \eqref{1.6}
		with initial configuration $$Y\vert_{\tau=0}=(y,(\sqrt{k/a_0}+\eta_0+\Phi(\eta_0)\om)\quad$$
		on $\Rb^{n-k}\times \Sb^{n-k}\times \Rb_{\ge0}$. 
		
		Moreover, this solution $Y$ is uniquely determined by  the decomposition
		\begin{equation}\label{1.9.3}
			Y=(y,(\sqrt{k/a}+\xi(y,\om,\tau))\om)\quad (y\in\Rb^{n-k},\,\om\in\Sb^k),
		\end{equation}
		where $a=a(\tau)$ is a component of the path $\si(\tau)=(g(\tau),z(\tau),a(\tau))\in\Si$, and $\xi=\xi(\cdot,\tau)$ 
		is a path of  functions on $\Rb^{n-k}\times \Sb^{n-k}$.
		The path $(\si,\xi)$ lies in the space
		$$(\si,\xi)\in Lip([0,\infty),\Si)\times (C([0,\infty),X^s)\cap C^1([0,\infty),X^{s-2})).$$
		
		\item (Effective dynamics)
		Moreover, 
		the path  $\si$ evolves according to the
		following  system of ODEs:
		\begin{align}
			\label{1.11}
			&\di_\tau \si=\vec F(\si)\xi+\vec M(\si,\xi)\quad \text{ for a.e. } \tau,\\
			\label{1.12}
			&\si(0)=(\delta_{ij},0,a_0)\in\Si,
		\end{align}
		where the vector fields $\vec F,\,\vec M$ are as in \eqref{3.5}.
		
		\item (Dissipative estimates)
		Moreover, the path $\si(\tau)$ dissipates to $\si(0)$, 
		with the decay estimate 
		\begin{equation}\label{1.13}
		\abs{\si(\tau)-\si(0)}\ls\delta\br{\tau}^{-1},\quad \tau\ge0.
		\end{equation}
		Here and below, we write $\br{\cdot}:=(1+\abs{\cdot}^2)^{1/2}$.

		Moreover, the function $\xi$ in the decomposition \eqref{1.9.3}
		is non-negative for all $\tau$, and dissipates to zero, 
		with the decay estimate 
		\begin{equation}
			\label{1.14}
			\norm{\xi(\cdot,\tau)}_{X^s}\le\delta\br{\tau}^{-2},
			\quad \tau\ge0.
		\end{equation}

		\end{enumerate}
		\end{theorem}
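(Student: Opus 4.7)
My approach combines modulation along the finite-dimensional symmetry group $\Si$ with a Lyapunov--Perron construction of the center--stable manifold for the rescaled MCF. Substituting the graph decomposition \eqref{1.9.3} into \eqref{1.6} and projecting onto the scalar radial perturbation $\xi$ yields a quasilinear parabolic equation of the schematic form
\begin{equation*}
\di_\tau \xi + L_a \xi = N(\xi,\grad_y\xi,\grad_y^2\xi) + G(\si,\di_\tau\si),
\end{equation*}
where $L_a$ is a version of the Colding--Minicozzi stability operator on the cylinder $\Rb^{n-k}\times\Sb^k$, $N$ collects the genuinely nonlinear contributions, and $G$ is the inhomogeneity produced by the motion of the modulation path $\si(\tau)$.

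\medskip

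The linearization $L_{a_0}$ acting on the Gaussian-weighted Sobolev space $X^s(a_0)$ has pure-point spectrum, with eigenfunctions given by Hermite polynomials in $y$ tensored with spherical harmonics in $\om$. This induces a direct-sum decomposition $X^s(a_0)=X_+\oplus X_0\oplus X_-$ into a finite-dimensional \emph{unstable} subspace $X_+$, a \emph{neutral} subspace $X_0$ generated by the infinitesimal action of $\Si$ (rotation, translation, dilation), and a \emph{stable} complement $X_-$ with strict spectral gap. The subspace $\Sc$ in the statement has codimension $\dim X_+$; the map $\Phi$ assigns to $\eta_0\in\Sc$ the $X_+$--correction required so that the trajectory starting from $\eta_0+\Phi(\eta_0)$ decays as $\tau\to\infty$, thereby parametrizing the center--stable manifold of the cylindrical equilibrium.

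\medskip

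Next, I determine the path $\si(\tau)$ by imposing the orthogonality $\xi(\cdot,\tau)\perp X_0$ for every $\tau$; differentiating this condition and substituting the $\xi$--equation yields an algebraic system solvable for $\di_\tau\si$, which is precisely \eqref{1.11}, with $\vec F(\si)\xi$ the piece linear in $\xi$ and $\vec M(\si,\xi)$ the nonlinear remainder. The coupled pair $(\xi,\si)$ is then constructed by a contraction mapping on the weighted space
\begin{equation*}
\Set{(\xi,\si):\ \sup_\tau\br{\tau}^2\norm{\xi(\cdot,\tau)}_{X^s}\le\delta,\ \sup_\tau\br{\tau}\abs{\si(\tau)-\si(0)}\le\delta},
\end{equation*}
propagating the $X_-$--component forward by the parabolic semigroup of $L_{a_0}$ and fixing the $X_+$--component by a backward-integral prescription ruling out growth as $\tau\to\infty$; this prescription is the definition of $\Phi$. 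The estimates \eqref{1.9.1}--\eqref{1.9.2} then follow from the quadratic-in-$\xi$ structure of $N$ and the routine Lipschitz dependence of the fixed point on data, and $\xi\ge 0$ is preserved by a parabolic maximum principle using that graphs parametrized by $\Bc_\delta\cap\Sc$ have nonnegative radial heights.

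\medskip

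The principal difficulty, I expect, is \emph{closing the coupled nonlinear estimates with the sharp rates in \eqref{1.13}--\eqref{1.14}}. The target rate $\br{\tau}^{-2}$ for $\xi$ beats the naive spectral gap, since the slowest stable mode has already been excluded by orthogonality to $X_0$; one must therefore quantitatively exploit the \emph{second} spectral gap of $L_{a_0}$, which typically costs derivative weight in $X^s$ and forces careful bookkeeping of borderline-decaying Hermite--spherical components. Conversely, $\di_\tau\si=O(\br{\tau}^{-2})$ by \eqref{1.11}, so integration yields the $O(\br{\tau}^{-1})$ bound in \eqref{1.13}, which is only marginally compatible with using $\si(\tau)$ as the effective linearization point; the bootstrap must quantify the drift of $L_{a(\tau)}$ from $L_{a_0}$ and absorb it into the fixed point. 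The smallness $\delta\ll 1$ and the separation $a_0\ge 1/2+2\delta$ enter here to keep the Gaussian weight of $X^s(a(\tau))$ uniformly comparable to the reference weight of $X^s\equiv X^s(1/2)$ throughout the evolution.
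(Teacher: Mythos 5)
Your overall architecture (modulation along $\Si$ plus a Lyapunov--Perron/backward-integral prescription for the non-decaying modes, closed by a contraction in the time-weighted norm $\sup_\tau(\br{\tau}^2\norm{\xi}_{X^s}+\br{\tau}\abs{\si-\si(0)})$) is indeed the skeleton of the paper's proof. However, your bookkeeping of the spectral subspaces contains a genuine gap, and it is exactly at the point the paper identifies as the crux. You split $X^s(a_0)=X_+\oplus X_0\oplus X_-$ with ``$X_0$ generated by the infinitesimal action of $\Si$'' and let $\Phi$ supply only the $X_+$-correction, with $\mathrm{codim}\,\Sc=\dim X_+$. This taxonomy is wrong for $L(a)$ on the cylinder: the symmetry-generated modes are not neutral (dilation has eigenvalue $-2a$, transverse translation $-a$; only rotation gives zero modes), and conversely not all non-decaying modes are symmetry-generated. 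The modes that survive modulation are (i) the axial-translation modes $\Si^{(1,0)}\sim y^i$ with eigenvalue $-a$, which come from a symmetry that the cylinder does \emph{not} break, so the corresponding modulation vector field is absent from $\di_\si W$ and no choice of $\si(\tau)$ can absorb them; and (ii) the quadratic shape modes $\Si^{(2,0)}\sim a y^iy^j-\delta_{ij}$ with eigenvalue $0$, which are not generated by any symmetry at all. In your scheme these $\Si^{(2,0)}$ modes sit inside neither $X_+$ (so $\Phi$ does not correct them) nor the symmetry-generated $X_0$ you modulate away, so nothing forces them to decay and the claimed rate \eqref{1.14} fails; likewise, attempting to modulate the axial translation is vacuous. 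The paper's $\Phi$ is precisely a correction along $\mathrm{span}\{\Si^{(1,0)},\Si^{(2,0)}\}$ determined by backward integrals (the Schlag-type renormalization), and $\Sc$ is orthogonal to \emph{all} zero-unstable modes, not just the unstable ones, so its codimension is the full count $n+2+\tfrac{(n-k)(n-k+3)}{2}$.

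A secondary, related point: the mechanism you flag as the main difficulty (``beating the naive spectral gap'' by exploiting a second gap of $L_{a_0}$) is not how the rate $\br{\tau}^{-2}$ arises. In the paper the genuinely stable projection decays exponentially; the rate-limiting components are exactly the corrected modes $\Si^{(1,0)},\Si^{(2,0)}$, whose coefficients are driven by the quadratic nonlinearity through the backward integrals, giving $\abs{\beta_i(\tau)},\abs{\g_{ij}(\tau)}\ls\delta^2\br{\tau}^{-3}$ from $\norm{\xi}_s^2\ls\delta^2\br{\tau}^{-4}$, and hence the self-consistent $\br{\tau}^{-2}$ bound after interpolation between the drifting weights $X^s(a(\tau))$ and the fixed $X^s$. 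So the closing of the bootstrap is an algebraic consistency of the quadratic forcing with the chosen weights, not a refined spectral-gap argument; your plan can be repaired by reassigning the modes as above, at which point it coincides with the paper's proof.
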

	
		The result 
		above are obtained by studying 
		a quasilinear PDE \eqref{2.1}
		in $\xi$, coupled to a system of ODEs, \eqref{2.17}-\eqref{2.15}
		in the parameter $\si$. Up to a rigid motion
		and a dilation, initial configuration \eqref{1.12}
		can be replaced with any $\si_0\in\Si$.

		In \cite{MR2993752}*{Thm. 4.31}, Colding and Minicozzi have shown
		that the cylindrical singularity of the MCF is  $F$-unstable.
		In terms of the PDE \eqref{2.1},
		this means that the static solution
		$\xi=0$ is \textit{linearly unstable.}
		In view of this,
		\thmref{thm1} above states 
		that for a generic (i.e. finite codimensional)
		class of initial perturbations, 
		$\xi=0$ is \textit{asymptotically stable} under the evolution 
		\eqref{2.1}.
		This gives a justification of the 
		generality of cylindrical singularity
		based on the PDE ground.
		
		Indeed, at the intuitive level,
		the fact that cylinders are  $F$-unstable 
		has to do with the symmetry of the (rescaled) MCF.
		The broken symmetries of the cylinder,
		e.g. transversal translation to the axis,
		generates some unstable modes 
		to the linearized operator 
		at the cylinder,
		$$L(a):=-\Lap_y+a\inn{y}{\grad_y(\cdot)}-\frac{a}{k}\Lap_{\om}-2a.$$
		Here $a>0$ corresponds to the cylindrical radius.
		In view of this, the space $\Sc$ in \thmref{thm1},
		defined in \defnref{defn3.2} below,
		removes all the zero-unstable modes of $L(a)$
		from the configuration space.
		
		Then, as customary in the analysis of solitons,
		one would like to write down a number of 
		ODEs, 
		which governs the motion
		of a frame of symmetries
		that eliminates the zero-unstable modes of $L(a)$ for all subsequent time.
		In our situation, this gives rise to the  ODEs \eqref{1.11}-\eqref{1.12} above.
		These ODEs are  called the \textit{modulation equations},
		and their use in the study of
		solitary wave dynamics dates back to \cites{MR1071238,MR1170476}
		
		The twist here is that not all of the zero-unstable modes 
		of $L(a)$ can be eliminated by the modulation method.
		First, there is the  translation  along the cylindrical axis,
		which incurs unstable modes protected by the symmetry of the cylinder.
		Secondly, as pointed out in \cite{MR3374960}*{Sect. 3.2},
		there are certain zero modes that are not due to 
		any symmetries. 
		
		To eliminate
		these,
		following \cite{MR2480603},
		 we introduce
		the second order correction $\Phi$, constructed explicitly in \defnref{Phi} below. 
		In \cite{MR2480603}, Schlag has
		studied a $L^2$-supercritical
		nonlinear Schr\"odinger equation,
		where some unstable mode
		arises which is not due to any symmetry.
		Here we implement  Schlag's method,
		which is essentially a  renormalization technique,
		to study the rescaled MCF \eqref{2.1}.

		Let $\Phi$ be the map constructed in \thmref{Phi}.
		In terms of invariant manifold theory,
					the set
		\begin{equation}\label{M}
			M=M_{\delta,a_0}=\Set{X:(x,\om)\mapsto (x,(\sqrt{k/a_0}+\eta+\Phi(\eta))\om)\mid\eta\in\Bc_\delta\cap \Sc}
		\end{equation}
		can be viewed as a stable manifold for \eqref{MCF},
		or, more appropriately, for the rescaled MCF \eqref{1.6}.	
		We discuss the geometric property of $M$ in \secref{sec:6}.
		In this paper, our focus is on the ``shadow''
		of  a flow generated by 
		an initial configuration in $M$ under the rescaled
		MCF, namely \eqref{1.11}-\eqref{1.12},
		and dissipative estimates of the form \eqref{1.13}-\eqref{1.14}.
		Indeed, $M$ is a stable manifold precisely in the sense that
		the infinite dimensional flow of hypersurfaces is well-approximated by
		the finite system of ODEs \eqref{1.11}-\eqref{1.12}
		in the space $\Si$.

		In physical terms, \thmref{thm1} 
		is an adiabatic theory for MCF near a cylindrical singularity.
		In the physics literature, especially in classical field theory, 
		the method of \textit{adiabatic approximation} involves 
		the decomposition of a  flow 
		into a slowly  evolving main part,
		together with a small fluctuation.
		In our situation, for a rescaled flow starting
		in $M$,
		the adiabatic approximation are the ODEs \eqref{1.11},
		which govern the motion of a main part (a transformed cylinder
		by symmetry)
		moving at the speed of $O(\delta)$.
		The fluctuation is the 
		remainder $\xi$.

In view of the original MCF, 
the adiabatic dynamics \eqref{1.11}-\eqref{1.12} of $\si$ implies the following:
\begin{corollary}[uniqueness of tangent flow, c.f. \cite{MR3374960}*{Thm. 0.2}]
	\label{cor1}
	Fix $0<\delta\ll1$ and $a_0\ge1/2+2\delta$.
	Suppose $X(\cdot,t)$ is 
	 a maximal solution to \eqref{MCF} that first develops
	a singularity at $(0,T)\in\Rb^{n+1}\times\Rb_{>0}$.
	Suppose up to a  dilation by a positive factor,
	at some $t_0<T$
	the hypersurface $X(\cdot,t_0)$
	lies in the stable manifold $M_{\delta,a_0}$ given in \eqref{M}.
	Then
	the tangent flow of $X$ is unique.
	
	Moreover, 
	the limit cylinder is parametrized by 
	$(y,\sqrt{k/a_0}\om)$ with $y\in\Rb^{n-k}$ and $\om\in \Sb^k$.

\end{corollary}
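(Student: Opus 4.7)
The plan is to read off uniqueness of the tangent flow at $(0,T)$ from the long-time dynamics supplied by \thmref{thm1}.

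\emph{Step 1 (Reduction).} After the prescribed positive dilation and a time translation so that $t_0=0$, the hypothesis places $X(\cdot,0)\in M_{\delta,a_0}$, so $X(\cdot,0)$ is of the form $(x,\omega)\mapsto(x,(\sqrt{k/a_0}+\eta_0+\Phi(\eta_0))\omega)$ for some $\eta_0\in\Bc_\delta\cap\Sc$. Applying \thmref{thm1}, I obtain a unique global solution $(\sigma,Y)$ of the rescaled MCF \eqref{1.6} on $\tau\ge 0$, satisfying the dissipative estimates \eqref{1.13}--\eqref{1.14}. Reconstructing the hypersurface $\tilde X$ via \eqref{1.1} and invoking standard uniqueness for the classical MCF from smooth initial data forces $\tilde X\equiv X$ on $[0,T)$, so the forward analysis of $X$ near its singularity reduces to the $\tau\to\infty$ behaviour of $(\sigma,Y)$.

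\emph{Step 2 (Asymptotics).} By \eqref{1.13}, $\sigma(\tau)\to(\mathrm{id},0,a_0)$; by \eqref{1.14} and parabolic regularity for the quasilinear equation \eqref{2.1}, $\xi(\cdot,\cdot,\tau)\to 0$ in $C^\infty_{\mathrm{loc}}$, hence $Y(\cdot,\cdot,\tau)\to(y,\sqrt{k/a_0}\,\omega)$. Integrating $\dot\lambda\lambda=-a$ with $a(\tau)\to a_0$ yields $\lambda(t)^2\sim 2a_0(T-t)$ and $\tau(t)\sim(2a_0)^{-1}\log(1/(T-t))$ as $t\to T^-$; in particular the rescaling \eqref{1.1} differs from Huisken's parabolic rescaling at $(0,T)$ only by a rotation $\to\mathrm{id}$, a scale factor $\to\sqrt{2a_0}$, and a translation $z(t)\to 0$.

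\emph{Step 3 (Tangent flow).} Define $X^{(r)}(\bar x,\omega,\bar t):=r^{-1}X(r\bar x,\omega,T+r^2\bar t)$ for $\bar t<0$. Substituting \eqref{1.1} and Step~2, $X^{(r)}\to(\bar x,\sqrt{-2k\bar t}\,\omega)$ in $C^\infty_{\mathrm{loc}}$ as $r\to 0^+$, independently of the sequence $r_j\to 0^+$. Hence the tangent flow at $(0,T)$ is unique, and at $\bar t=-1/(2a_0)$ it is the stated cylinder $(y,\sqrt{k/a_0}\,\omega)$.

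\emph{Main obstacle.} The delicate point is the translation component $z(t)/r$ in the limit of Step~3. The bound \eqref{1.13} together with $\tau(t)\sim(2a_0)^{-1}\log(1/(T-t))$ only gives $|z(t)|\lesssim 1/|\log(T-t)|$, which is not immediately sharp enough to force $z(t)/\sqrt{T-t}\to 0$. The resolution is to exploit the modulation structure of \thmref{thm1}: the subspace $\Sc$ is chosen in \defnref{defn3.2} precisely to annihilate the translational zero modes of $\xi$, so the $z$-component of $\vec F(\sigma)\xi$ in \eqref{1.11} is effectively quadratic in $\xi$. Bootstrapping the projected ODE with this orthogonality and \eqref{1.14} upgrades $|\dot z(\tau)|$ to $\OO{\br{\tau}^{-4}}$, and iterating through the exponential relation $\lambda\propto e^{-a_0\tau}$ converts this into polynomial decay of $z(t)$ in $T-t$ sharper than $\sqrt{T-t}$, as required to conclude Step~3.
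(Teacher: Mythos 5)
Your Steps 1--3 follow essentially the same route as the paper: reduce via a dilation and time shift to an initial surface in $M_{\delta,a_0}$, invoke \thmref{thm1} to get the unique global pair $(\si,\xi)$ with the asymptotics \eqref{1.13}--\eqref{1.14}, and conclude that the blow-up limit is independent of the rescaling sequence. (The paper is in fact softer than you are: it stops at the sequence-independence and uniqueness of the Cauchy problem \eqref{1.11}--\eqref{1.12} and does not carry out your recentering computation in Step 3, nor the MCF-uniqueness identification of Step 1, which for a noncompact graph controlled only in Gaussian-weighted norms is itself not entirely ``standard''.)

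The genuine problem is the resolution you offer for your self-identified main obstacle. A bound $\abs{\dot z(\tau)}=\OO{\br{\tau}^{-4}}$ integrates (using $z(\tau)\to z(0)=0$ from \eqref{1.13}) only to $\abs{z(\tau)}\ls\br{\tau}^{-3}$, i.e. $\abs{z(t)}\ls\abs{\log(T-t)}^{-3}$: polynomial decay in $\tau$ is merely logarithmic decay in $T-t$, so no ``iteration through $\l\propto e^{-a_0\tau}$'' can turn it into decay of $z$ faster than $\sqrt{T-t}$; moreover the decay of $\xi$ itself is genuinely only $\br{\tau}^{-2}$ (cf. \eqref{4.11}, \eqref{4.15}), so the bootstrap cannot gain further powers from that side either. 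What actually closes the step is a factor of $\l$ that your estimate drops: the transversal translation mode is $\Si^{(0,1)(0,0,l)}(a)=\l^{-1}\om^l$, with $\norm{\Si^{(0,1)}}_{0,a}^2\sim\l^{-2}$ (equivalently, the parameter entering \eqref{1.6} and \eqref{W} is $z/\l$, not $z$). Under the preserved orthogonality \eqref{2.14} one has $\inn{\xi}{\Si^{(0,1)}}_a=0$ and also $\inn{\xi}{\di_\tau\Si^{(0,1)}}_a=a\inn{\xi}{\Si^{(0,1)}}_a=0$ (since $\di_\tau\l=-a\l$), so \eqref{2.16} reduces to $\norm{\Si^{(0,1)}}_{0,a}^2\,\dot z_l=-\inn{N(a,\xi)}{\Si^{(0,1)}}_a$, giving $\abs{\dot z}\ls\l(\tau)\norm{\xi}_s^2\ls\delta^2e^{-a_0\tau}\br{\tau}^{-4}$, which is exponentially small in $\tau$. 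Integrating from $\tau$ to $\infty$ and using $z_\infty=z(0)=0$ yields $\abs{z(\tau)}/\l(\tau)\ls\delta^2\br{\tau}^{-4}\to0$, which is exactly what your Step 3 needs for the rescaled axes to stay centered at the origin. With this correction your argument goes through and is in fact more detailed than the paper's own proof.
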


The uniqueness of tangent flow  is established in
\cites{MR3349836,MR3374960,choi2021ancient} under various assumptions.
In these papers, the authors show that \textit{assuming}
a flow develops a cylindrical singularity of a certain 
profile, then the latter is independent of the tangent flow.

The point of \corref{cor1} is that for a flow starting from $M$,
we have
explicit information of the limit of the tangent flow given the initial profile. 
In fact, the uniqueness of tangent flow 
is a by-product of the equations \eqref{1.11}-\eqref{1.12}.
By assumption, \corref{cor1} applies only to
surfaces that at some $t_0<T$
that are $X^s(a_0)$-close to a cylinder.
This is as expected for an adiabatic theory,
whose validity
relies on the fact that the evolving 
configurations are near equilibira.

		\thmref{thm1} and the corollary are proved in \secref{sec:3.2}.
	
	\subsection{Historical Remarks}
In a series of papers \cites{MR2465296,MR3397388,MR3803553,MR4303943,
		gang2021dynamics,zhou2021nondegenerate},
	Zhou Gang and several co-authors have developed an adiabatic theory for
	cylindrical singularity of certain dimensions.
	Here, as remarked earlier, by \textit{adiabatic theory} we mean
	the method of slow motion approximation.
	Indeed, the key technique in 
	those papers  is the  decomposition of a solution evolving
	according to
	\eqref{MCF} into a slowly evolving main part of the form
	\begin{equation}\label{V}
			V_{a,B}=\sqrt{\frac{k+\tfrac{1}{2}\inn{By}{y}}{a}},
	\end{equation}
	together with a remainder. Here $a(t)>0$, and 
	$B(t)$ is a path of symmetric real matrix of size $n-k$ to be determined.
	(These play the role of $\si$ in \eqref{1.8'}.)
	Then one studies \eqref{MCF} in a suitable configuration space, and tries to derive
	dissipative estimate for the remainder.
	This line of thought is an essential motivation to the present paper.
	
	Ultimately, the 
	adiabatic theory of \cites{MR2465296,MR3397388,MR3803553,MR4303943,
		gang2021dynamics,zhou2021nondegenerate}
	has to do with the zero-unstable modes of 
	the linearized operator at a cylinder of radius $a>0,$
	as mentioned earlier.
Indeed, consider the expression \eqref{V}. 
	This $V$ solves the static equation
	$$-a\inn{y}{\grad_y}v+av-kv^{-1}=0,$$
	which is obtained by ignoring all time derivatives 
	and all second order spatial derivatives in
	the graphical form of the rescaled MCF, \eqref{2.1}.
	(In multi-scale analysis and classical field theory,
	solution of such truncated equation is often called \textit{the adiabatic solution.})
	For  $\abs{B}\ll1$,
	expanding $V$ in $y$ gives $V=\sqrt{k/a}(1+\tfrac{1}{4k}\inn{By}{y}+O(\abs{B}^2\abs{y}^4))$,
	where the first term corresponds to the unstable modes with eigenvalue $-2a$
	of the operator $L(a)$, and the 
	second term, a homogeneous quadratic polynomial in $y$,
	corresponds to the non-symmetry zero modes mentioned earlier. 
	This expansion of $V$ is valid for $\abs{\inn{By}{y}}\ll1$,
	which holds on a bounded region for sufficiently small $\abs{B}$.
	On such a region, it is possible to write down a system of modulation equations
	for $B$ so as to eliminate the effect of the non-symmetry zero modes.
	The analysis of the dynamics of the matrix $B$
	constitutes the most difficult part  in \cites{MR2465296,MR3397388,MR3803553,MR4303943,gang2021dynamics,zhou2021nondegenerate}.
	Then there is another modulation equation for $a$ to eliminate 
	the scaling instability, together with some 
	center manifold analysis to control the rest of zero-unstable modes.
	The authors of the above mentioned papers have developed an extension-improvement
	method (what they call the ``bootstrap machines'') to make this rigorous.
	This method culminates in a proof of the mean convexity conjecture with $\Rb^3\times \Sb^1$ singularity in \cite{MR4303943}.
	
	In the past decade, in a series of papers  \cites{MR2993752,MR3374960,MR3349836,MR3381496,MR3489702,MR3521319,MR3602529,MR3662439,MR4176547,chodosh2020mean,chodosh2021mean,choi2021ancient,choi2021ancient2}, a group of prominent geometers
	have developed 
	some remarkable regularity results for cylindrical singularities,
	using various methods of geometric analysis and geometric measure theory.
	Here we single out \cite{MR3374960}, in which Colding and Minicozzi
	developed the \L ojasiewicz inequalities for the 
	rescaled MCF. Some estimates from that paper are
	used for the developments in Sects. \ref{sec:2}-\ref{sec:6},
	though the \L ojasiewicz inequalities themselves are not used.
	
	The parabolic rescaling \eqref{1.3}-\eqref{1.5} sends a fixed 
	spacetime neighbourhood on which the MCF \eqref{MCF} is posed
	to a growing region in the blow-up
	variables $y\in\Rb^{n-k}$ and $\tau\ge0$, on which \eqref{1.6} is posed.
	This results in some unfavourable issues due to non-compactness
	in the analysis of the latter equation.
	To overcome these issues, many results in \cites{MR2993752,MR3374960,MR3349836,MR3381496,MR3489702,MR3521319,MR3602529,MR3662439,MR4176547,chodosh2020mean,chodosh2021mean,choi2021ancient,choi2021ancient2}
	are set in the Gaussian weighted Sobolev space (e.g. \cites{MR2993752,MR3374960}).
	This, together with the extensive use of geometric 
	measure theory, makes it difficult to apply 
	the regularity theory from these papers 
	to derive precise asymptotics of the form \eqref{1.11}-\eqref{1.14}
	for the rescaled MCF,
	which are explicit and valid uniformly in time.

In \cites{MR2465296,MR3397388,MR3803553,MR4303943,gang2021dynamics,zhou2021nondegenerate},
	in order to derive precise and uniform asymptotics, 
	Zhou Gang and various co-authors study the rescaled MCF
	in the blow-up variables in 
	a weighted $C^k$-space, with a norm of the form 
	$\norm{\phi}=\norm{\br{y}^{-l}\di_y^\al\di_\om^\beta\phi}_{L^\infty}$
	for a function $\phi$ defined on a fixed cylinder, 
	multi-indices $\abs{\al}+\abs{\beta}\le k$,  $\br{y}:=(1+\abs{y}^2)^{1/2}$, and
	some power $l\ge1$.
	Using this norm, these authors are able to obtain
	$C^2$-control over a frowing region of size roughly $\tau^{1/2}$ at time $\tau$
	in the microscopic scale.
	This amounts to $C^2$-control over a fixed spatial neighbourhood in
	the original scale.
	To get such control, the authors of the aforementioned papers
	have developed several novel propagator estimates
	in the polynomially weighted norm given above.
	This kind of propagator estimates are used in 
	the present paper.
	
	In contrast, 
	using the \L ojasiewicz inequalities from \cite{MR3374960},
	one has $C^2$-control only 
	over a region of size $\sqrt{\log\tau}$ on the rescaled hypersurface,
	which amounts to a shrinking region in the original, macroscopic scale.
	This limitation is due to the fact that control
	over Gaussian weighted Sobolev norm
	is, in general, not sufficient to conclude any 
	uniform control, since one loses
	almost all asymptotic data.
	In particular, we lack embedding theorems
	of Gaussian weighted Sobolev spaces into H\"older spaces.
	
	On the other hand, 
	since the adiabatic solution $V$ in \eqref{V}
	is
	not  in fact a stationary solution to the rescaled MCF, 
	the regularity theory of Colding-Minicozzi,
	based on the analysis of Huisken's $F$-functional
	(the Gaussian weighted volome), is not very useful
	for the result in \cites{MR2465296,MR3397388,MR3803553,MR4303943,gang2021dynamics,zhou2021nondegenerate}.
	Indeed, unlike the cylinder, this $V$ from \eqref{V} has nothing to do
	with the $F$-functional.
	This fact results in considerable technical complications in those papers.
	
	In  the present paper, we are only interested in the asymptotic
	singularity profile of \eqref{MCF}, and therefore
	all the analysis in the subsequent sections
	are set in the Gaussian weighted Sobolev space.
	By the first \L ojasiewicz inequality from \cite{MR3374960}*{Thm. 2.54, Thm. 5.3},
	it is not hard to see that the remainder estimate \eqref{1.14}
	implies  $C^2$-control over a region of size $\sqrt{\log\tau}$ on
	the rescaled hypersurfaces,
	but this does not correspond to a fixed region on the
	original scale. Hence, we do not conclude any uniform estimate 
	in this paper, but leave this to some future work.

	Recently, invariant manifold theory for MCF is studied in \cites{sun2021initial,sun2021initial2}. In these papers, 
the authors have constructed stable/central/unstable manifolds
	for MCF near a closed singularity, and obtained some results
	about unstable initial perturbation for conical singularity.
	These works are along the line of \cites{MR1000974,MR2439610,MR1445489,MR1675237,MR1160925},
	in which
	invariant manifold theory for semilinear parabolic
	equations are studid extensively. 
	In this paper we use  different methods and have rather different focus,
	as mentioned earlier.
	
	\subsection{Organization of the Paper}
	The purpose of the present paper is to lay out 
	a framework in which one has
	precise asymptotics and effective dynamics near a 
	cylindrical singularity of arbitrary dimensions.

	Uniform control (in time) of this kind is achieved in the main result, \thmref{thm1}, 
		for a generic class
	of initial configurations. 
	The latter is the 
	 manifold $M$ given in \eqref{M}, constructed explicitly
	in \defnref{Phi}. This is generic in the sense that it has finite
	codimensions in the configuration space. 
	
	In this paper we mostly consider the rescaled equation \eqref{1.6}.
	To keep track of the precise dynamics of the hypersurfaces,
	we restrict ourselves to graphs over 
	cylinder.
		To use some of the regularity theory of  Colding-Minicozzi,
	we study the rescaled MCF in the Gaussian weighted 
	Sobolev space $X^s(a)$.
	The graphical equation thus obtained and the configuration spaces are 
	defined in \secref{sec:2}.
	
	In \secref{sec:2.2}, we collect some known results regarding 
	the linearized operator $L(a)$ at a cylinder. 
	In \secref{sec:2.3},
	we develop the first order correction that eliminates
	the destabilizing effect of  the zero-unstable modes arising
	from broken symmetry. This is done by imposing a certain set
	of modulation equations, namely \eqref{2.17}-\eqref{2.15}.
	
	In \secref{sec:3}, we define the key map $\Phi$
	in \thmref{thm1}. This is defined in
	terms of the unique fixed points of a family of maps
	$\Psi=\Psi(\cdot,\eta_0)$, also introduced in that section.
	In \secref{sec:3.2}, we prove the main theorem and its corollaries,
	assuming the key ingredients from Sects. \ref{sec:4}-\ref{sec:6}.
	
	Indeed, \thmref{thm1} relies on a fixed point scheme 
	for the map $\Psi$. 
	In \secref{sec:4}, we show that this map $\Psi$ goes from a suitable
	configuration space $\Ac_\delta$ into itself. 
	This section contains the heart of the matter, namely an
	appropriate second order correction 
	that eliminates the effect of the remaining zero-unstable modes
	that persist the  modulation.
	These remaining modes
	 correspond to translation symmetry $Y(y,\om)\mapsto Y(y+y',\om)$
	 along the cylindrical axis, which is not broken by the cylinder,
	and the non-symmetry zero modes mentioned earlier.
	
	In \secref{sec:5}, we show that $\Psi$ is a contraction in $\Ac_\delta$.
	In \secref{sec:6}, we prove the estimates \eqref{1.9.1}-\eqref{1.9.2},
	which show that the set $M$  in \eqref{M} forms a Lipschitz graph over $\Bc_\delta\cap \Sc$ of
	size $\delta$, and therefore can be treated as a manifold.
	
	In Appendix \ref{sec:B}, we prove some elementary results that are
	nonetheless crucial for the developments in Sects. \ref{sec:4}-\ref{sec:6}.
	In Appendix \ref{sec:A}, we prove that the map $\Psi$ from \secref{sec:3.1}
	is well-defined.
	In Appendix \ref{sec:C}, we collect certain technical nonlinear estimates.

	 	\subsection*{Notation}
	 Throughout the paper, the notation $A\ls B$ means that there is a constant $C>0$ independent of time and any small parameter 
	 in the problem, such that $A\le CB$. 
	 For two vectors $A,\,B$ in Banach spaces $X,\,Y$ respectively,
	 the notation $A=O_{Y}(B)$ means $\norm{A}_{X}\ls \norm{B}_{Y}$.

	\section{The modulation equations: first-order correction}
	\label{sec:2}
	In this section we develop the first order correction for the
	rescaled  mean curvature flow.
	
	By the relations \eqref{1.1} and \eqref{1.8'}, as far as \thmref{thm1} is concerned, it suffices to consider 
\eqref{1.6} in the unknown pair $(\si,\xi)$,
entering the equation through \eqref{1.8'}.

		There are various advantage of studying \eqref{1.6} instead of \eqref{MCF}.
		This is typical when one is interested in blow-up solutions to 
		evolution equations,  such as in  the study of 
		critical solitary wave dynamics where in general
		solution blows up with various profiles in finite time.
		See for instance \cite{MR2656972}.
		For one, \eqref{1.6} has global solutions in time.
		(Actually, \thmref{thm5.1} below proves global well-posedness
		for \eqref{1.6} for a large class of initial configurations.)
		Thus we will mostly work with \eqref{1.6}
        and only return to \eqref{MCF} in \secref{sec:3.2}, when we derive geometric
		consequences of the effective dynamics for the original flow. 
		Moreover, since \thmref{thm1} only concerns with solutions 
		that are normal graphs over $\Rb^{n-k}\times \Sb^k$ of the form \eqref{1.8'}, 
		when we study \eqref{1.6} below, we reparametrize 
		$Y$ as $Y=(x,v(y,\om,\tau)\om)$ and analyze the behaviour of the 
		path of functions $v(\cdot,\tau):\Rb^{n-k}\times \Sb^k\to \Rb,\,\tau\ge0$.

	To this end, following \cites{MR2993752,MR3374960},
	 we introduce a family of Gaussian weighted Sobolev spaces. 
	\begin{definition}
		\label{defn2.0}
		For $s\ge0,a>0$, define the space
		\begin{equation}
			\label{X}
				X^s(a):=H^s(\Rb^{n-k}_y\times \Sb^k_\om\,,\Rb;\,\rho_a),\quad \rho_a:=e^{-a\abs{y}^2/2}\,d\mu.
		\end{equation}
		Here $d\mu$ is the canonical measure on $\Rb^{n-k}\times \Sb^k$.
		
		The space $	X^s(a)$ is equipped with the weighted inner product
		\begin{equation}
			\label{inn}
			\inn{\phi}{\psi}_{a}=\int \phi\psi \rho_a\quad(\phi,\psi\in X^s(a)).
		\end{equation}
		The induced norm by this inner product on $X^s(a)$
		is denoted by $\norm{\cdot}_{s,a}$.
		For simplicity, we write $X^s\equiv X^s(1/2)$ with norm $\norm{\cdot}_s$.
		The space $X^s$ has standard Gaussian measure $e^{-\abs{y}^2/4}d\mu$ centered at $y=0$. (The choice of a pivot $a=1/2$ can be replaced by any other number.)
		For a linear map $L:X^s(a)\to X^r(b)$, we denote the operator 
		norm of $L$ as $\norm{L}_{s,a\to r,b}$.
	\end{definition}

	For $s\le r$ and  $0< b\le a$, there holds the trivial continuous embedding
	\begin{equation}\label{2.0}
		X^r(b)\subset X^s(a).
	\end{equation}

	In Appendix \ref{sec:B}, we consider the issue of estimating
	the weaker $X^s(b)$-norm in terms of the $X^s(a)$-norm, with $a\ge b$.

	 Following \cites{MR1030675}, 
	 we introduced the weighted volume, or  the $F$-functional, for a normal graph $v=v(y,\om)$ over cylinder  as
	 \begin{equation}\label{F}
	 	F_a(v):=\int_S\rho_a\,d\mu_S,\quad S=S(v):=\Set{(y,v(y,\om)\om):y\in\Rb^{n-k},\,\om\in\Sb^k}.
	 \end{equation}
 	Using Sobolev inequalities, one can show that
 	this functional is $C^2$ as a Fr\'echet-differentiable map on the space $X^s(a)$ with $a>0,\,s\ge2$.
 Indeed, denotes $F_a'(v)$ the $X^0(a)$-gradient of $F_a$ at $v$.
	 Explicitly, we have
\begin{equation}\label{F'}
	F_a'(v)=-\Lap_yv+a\inn{y}{\grad_y}v-v^{-2}\Lap_\om v-av+kv^{-1}+N_1(v),
\end{equation}
where $N_1(v):X^s(a)\to X^{s-2}(a)$ is a quasilinear elliptic operator, given explicitly in \eqref{N1}.
Expression \eqref{F'} can be derived  from the first variation formula in \cite{MR2993752},
and the graphical MCF equation obtained in \cites{MR1100206}.
The nonlinear map $F_a'$ is $C^1$ from $X^s(a)\to X^{s-2}(a)$.

	\subsection{The graphical equations}

	In this subsection we consider \eqref{1.6} in the space $X^s(a)$ with $a>0,\,s\ge2$.
	
\begin{itemize}
	\item 	In the remaining of this paper, unless otherwise stated, 
	the dot denotes $\pd{}{\tau}$.
\end{itemize}
	\begin{lemma}
		\label{lem2.1}
		$X$ of the form \eqref{1.8'} satisfies the MCF \eqref{MCF}  if and only if  
		the pair $(\si,\xi)$ satisfies
		\begin{equation}
			\label{2.1}
			\dot \xi=-F_a'(\sqrt{k/a}+\xi)-\di_\si W(\si)\dot \si,
		\end{equation}
	where $F_a'(v)$ is as in \eqref{F'}, and $W:\Si\to X^s(a)$ is given by
	 \begin{equation}\label{W}
		W(\si):=\sqrt{k/a}+g_{n-k+l,j}\om^ly^j+\inn{z}{\l^{-1} \om},
	\end{equation}

	\end{lemma}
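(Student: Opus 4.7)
The plan is to substitute the ansatz $Y=(y,\,v\omega)$ with $v := \sqrt{k/a(\tau)} + \xi(y,\omega,\tau)$ directly into the rescaled MCF \eqref{1.6} and to project the resulting vector identity in $\Rb^{n-k}\times\Rb^{k+1}$ onto the radial direction $(0,\omega)$, which is the natural normal to the cylinder. Since $y$ and $\omega$ are intrinsic coordinates independent of $\tau$, only $v$ contributes to $\partial_\tau Y$, so the $\omega$-projection of the left-hand side is $\dot v = \dot\xi + (\partial_a\sqrt{k/a})\dot a$.

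On the right-hand side of \eqref{1.6} I would treat three groups of terms separately. First, the $\sigma$-free piece $-H(Y)\nu(Y) - a\inn{y}{\grad_y} Y + aY$ is, up to the scalar factor relating the true unit normal to $(0,\omega)$, the negative $L^2(\rho_a)$-gradient of $F_a$ restricted to normal graphs over the cylinder. A direct computation of the mean curvature of such a graph, together with the first variation formula underlying \eqref{F'}, shows that its $\omega$-component equals $-F_a'(v)$ with $F_a'$ as in \eqref{F'}; the quasilinear term $N_1(v)$ absorbs both the Jacobian factor and the discrepancy between the true unit normal and $(0,\omega)$. Second, since $A:=g^{-1}\dot g \in \mathfrak{so}(n+1)$ is skew-symmetric, the rotation term $-g^{-1}\di_\tau g\,Y$ evaluated at $Y=(y,v\omega)$ and projected on $\omega$ produces a bilinear form in $y$ and $\omega$ whose coefficients are precisely the entries of $\dot g$; under the normalization \eqref{1.2.1} the relevant mixed entries are $\dot g_{n-k+l,j}$, and the projection matches the $\si$-derivative of the second summand of $W$ in \eqref{W} applied to $\dot g$. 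Third, the translation term $-\l^{-1}g^{-1}\di_\tau z$ projected on $\omega$ gives $-\l^{-1}\inn{\dot z}{g\omega}$, which coincides with the $z$-derivative of the third summand of $W$ applied to $\dot z$.

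Collecting the three contributions, the term $(\partial_a\sqrt{k/a})\dot a$ originating from the left-hand side moves to the right-hand side and joins the rotation and translation contributions to form exactly $-\di_\si W(\si)\dot \si$, yielding \eqref{2.1}. The converse is obtained by reading this computation in reverse and invoking the classical fact that, for a normal graph over cylinder, the evolution equation for the graphing function in the radial direction is equivalent to the full rescaled MCF system \eqref{1.6} modulo tangential reparametrization. The main obstacle is the first of the three steps above, namely the identification of the $\omega$-projection of the $\si$-free block with $-F_a'(v)$ and the explicit form of the quasilinear remainder $N_1(v)$; once this is in place, the remaining content of the lemma amounts to the observation that $W$ has been \emph{defined} in \eqref{W} precisely so that its $\si$-derivative collects every normal-direction contribution generated by the moving frame $\si(\tau)$.
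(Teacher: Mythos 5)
Your proposal is correct and follows essentially the same route as the paper: project \eqref{1.6} onto the radial vector $(0,\om)$, identify the geometric block with $-F_a'(v)$ (citing the graphical mean curvature computation, with $N_1$ absorbing the remainder), use the normalization \eqref{1.2.1} to reduce the rotation and translation terms to $\dot g_{n-k+l,j}\om^l y^j$ and $\l^{-1}\inn{\dot z}{\om}$, and move $\di_\tau\sqrt{k/a}$ to the right to assemble $-\di_\si W(\si)\dot\si$. No substantive differences from the paper's argument.
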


	\begin{proof}
		
	Write $v=\sqrt{k/a}+\xi$.
	Recall from the Introduction
	that $X$ of the form \eqref{1.8'} satisfies the MCF \eqref{MCF}  if and only if  
	the rescaled hyersurface
	$Y=(y,v(y,\om)\om,\tau)$ from \eqref{1.1} solves \eqref{1.6}.
	Following \cites{MR2465296,MR3397388,MR3803553,MR4303943,
		gang2021dynamics,zhou2021nondegenerate},
	we derive the evolution of $v$ from \eqref{1.6}.
	
	Taking inner product on both sides of \eqref{1.6} with the vector $(0,\om)\in\Rb^{n+1}$, we find
	\begin{multline}\label{2.2}
		\dot v=-\inn{H(v)\nu(v)}{(0,\om)}-a\inn{y}{\grad_y}v+av
		-\inn{\dot g (y,v\om)}{(0,\om)}-\l^{-1}
		\inn{\dot z}{\om}.
	\end{multline}
	Here $H(v)$ and $\nu(v)$ denotes the mean curvature and the unit normal vector
	at the point $(y,v(y,\om)\om)$ respectively.
	To get \eqref{2.2}, we use the fact that $\inn{g^{-1}(\cdot)}{\cdot}=\inn{\cdot}{g(\cdot)}$ as $g\in SO(n+1)$, and the requirement $g(0,x')=(0,x')$ for every $x'\in \Rb^{k+1}$. 
	
	
	The first term in the r.h.s.  is calculated e.g. 
	in \cite{MR3803553}*{Appendix A}, whose
	 derivation is routine. 
	 This, together with the following two terms, constitute the r.h.s. of \eqref{F'}
	 
	 Differentiating the condition \eqref{1.2.1}, we find
	$\dot g(0,x')=0$. Thus the
	    penultimate term in the r.h.s. of \eqref{2.2} further simplifies as
	$$\inn{\dot g (y,v\om)}{(0,\om)}=\dot g_{n-k+l,j}\om^ ly^j\quad (1\le j\le n-k,\,1\le l \le k+1).$$
	Lastly, moving the $\tau$-derivative of $\sqrt{k/a}$ in the l.h.s. of \eqref{2.2} (recall $v(\tau)=\sqrt{k/a(\tau)}+\xi(\tau))$,
	gives \eqref{2.1}.
	\end{proof}

	\begin{remark}\label{remW}
	For future use, here we note that for every $s\ge0$ and $b>0$,
	the function $W:\Si\to X^s(b)$ is smooth and bounded
	on the region
	$$\Set{(g,z,a)\in \Si: \lambda z\le C, \,a\ge c },$$
	for any fixed  constants $c,\, C>0$. 
	\end{remark}

	\subsection{Linearized operator at $Y_a$}\label{sec:2.2}
	The only nonlinear term in \eqref{2.1} is contained in the map $F'_a$,
	which is the (normal) $X^0(a)$-gradient of the $F$-functional,
	\eqref{F}. 
	In this subsection we linearize this map around the stationary
	solution $v\equiv \sqrt{k/a}$, which corresponds to a cylinder
	with radius $\sqrt{k/a}$ and along the $y$ axis.
	Then we study the zero-unstable modes of the linearized
	operator.

	All of the results in this section are known by far. See for instance 
	\cite{MR2993752}*{Sec.5}, \cite{MR3374960}*{Sec. 3.2}.
	\begin{lemma}
		\label{lem2.2}
		The linearized operator of $F'_a(v)$
		at the critical point $v\equiv \sqrt{k/a}$ is given by
		\begin{equation}\label{2.3}
			L(a):=-\Lc_a-\frac{a}{k}\Lap_{\om}-2a,
		\end{equation}
	where $\Lc_a:=\Lap_y-a\inn{y}{\grad_y(\cdot)}$ is the drift Laplacian on the
	weighted space $X^s(a)$.

	Moreover, the operator $L(a)$ is self-adjoint in $X^s(a)$ w.r.t. the inner product from \eqref{inn}, and is bounded from $X^s(a)\to X^{s-2}(a)$.
	The spectrum of $L(a)$ is purely discrete, 
	and the only non-positive eigenvalues, together with the associated 
	eigenfuncitons, are given by
	\begin{align}
		\label{2.4}
		&-2a,\quad \text{with eigenfunction }\Si^{(0,0)(0,0,0)}(a):=-\frac{\sqrt{k}}{2}a^{-3/2},\\
		\label{2.5}
		&-a,\quad \text{with eigenfunctions }\Si^{(0,1)(0,0,l)}(a):=\l^{-1}\om,\\
		\label{2.6}
		&-a,\quad \text{with eigenfunctions }\Si^{(1,0)(i,0,0)}(a):=\frac{1}{\norm{y^i}_{0,a}^2}y^i,\\
		\label{2.7}
		&0,\quad \text{with eigenfunctions }\Si^{(1,1)(i,0,l)}(a):=y^i\om^l,\quad \\
		\label{2.8}
		&0,\quad \text{with eigenfunctions }\Si^{(2,0)(i,j,0)}(a):=\frac{1}{\norm{ay^iy^j-\delta_{ij}}_{0,a}^2}(ay^iy^j-\delta_{ij}).
	\end{align}
	Here and in the remaining of this paper, the indices $l=1,\ldots,k+1$ and $i,j=1,\ldots,n-k$.
	
	Moreover, the functions in \eqref{2.4}-\eqref{2.7} are mutually orthogonal w.r.t. the inner
	product \eqref{inn}.
	\end{lemma}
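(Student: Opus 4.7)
The plan is to obtain the linearization \eqref{2.3} by direct computation from the formula \eqref{F'}, then read off the spectrum via separation of variables into a drift-Laplacian part in $y$ and a spherical Laplacian part in $\omega$.

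First I would linearize $F'_a$ around $v_0:=\sqrt{k/a}$. Writing $v=v_0+u$ and expanding each term in \eqref{F'} to first order in $u$: the terms $-\Lap_y v$ and $a\inn{y}{\grad_y}v$ are linear, so they contribute $-\Lap_y u+a\inn{y}{\grad_y}u=-\Lc_a u$; the term $-v^{-2}\Lap_\om v$ gives $-v_0^{-2}\Lap_\om u=-\tfrac{a}{k}\Lap_\om u$ since $\Lap_\om v_0=0$; the scalar terms $-av+kv^{-1}$ cancel at $v_0$ (this is precisely the statement that the cylinder is stationary) and their derivatives at $v_0$ give $-a-kv_0^{-2}=-2a$. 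The quasilinear term $N_1(v)$ from \eqref{N1} is at least quadratic in $\xi=v-v_0$, so its linearization at $v_0$ vanishes. Collecting everything yields \eqref{2.3}.

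Next I would verify self-adjointness. The drift Laplacian $\Lc_a$ is (up to a sign) the Ornstein--Uhlenbeck operator, well-known to be self-adjoint and bounded from $X^s(a)\to X^{s-2}(a)$ w.r.t. the weight $\rho_a$ because it is of the form $\rho_a^{-1}\Div(\rho_a\grad\cdot)$; the spherical Laplacian $\Lap_\om$ is self-adjoint on $L^2(\Sb^k)$; and a scalar is trivially self-adjoint. The three pieces act on different variables, so they commute and their sum is self-adjoint and bounded on the tensor product space $X^s(a)$.

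The spectrum is then extracted via separation of variables, using the unitary identification
\[
X^0(a)\cong L^2(\Rb^{n-k},\rho_a)\otimes L^2(\Sb^k).
\]
Hermite polynomials $\{h_\alpha^{(a)}\}_{\alpha\in\Nb^{n-k}}$ form an orthonormal basis of the first factor, with $-\Lc_a h_\alpha^{(a)}=a\abs{\alpha}h_\alpha^{(a)}$; spherical harmonics $\{Y_\ell^m\}$ diagonalize $-\Lap_\om$ with eigenvalues $\ell(\ell+k-1)$. Hence $L(a)$ has pure point spectrum
\[
\mu_{\abs{\alpha},\ell}=a\abs{\alpha}+\tfrac{a}{k}\ell(\ell+k-1)-2a,
\]
and a check case-by-case shows $\mu_{\abs{\alpha},\ell}\le 0$ only for $(\abs{\alpha},\ell)\in\{(0,0),(1,0),(0,1),(1,1),(2,0)\}$, giving the eigenvalues $-2a,-a,-a,0,0$. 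The associated eigenspaces are spanned respectively by the constant, the linear Hermite polynomials $y^i$, the degree-one spherical harmonics $\om^l$, the products $y^i\om^l$, and the degree-two Hermite polynomials $ay^iy^j-\delta_{ij}$; after fixing the normalization conventions prescribed in \eqref{2.4}-\eqref{2.8} one obtains the stated $\Si^{(\cdot,\cdot)(\cdot,\cdot,\cdot)}(a)$. Finally, orthogonality of these functions with respect to $\inn{\cdot}{\cdot}_a$ follows either from their lying in distinct eigenspaces of the self-adjoint operator $L(a)$, or, within a degenerate eigenspace, from the product structure of the measure $\rho_a=e^{-a\abs{y}^2/2}\,d\mu$ and the factorization of the basis. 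The main (but purely computational) obstacle is keeping track of the normalization constants in \eqref{2.4}-\eqref{2.8}; no conceptual difficulty arises.
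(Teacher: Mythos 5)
Your proposal is correct and follows essentially the same route as the paper: the paper likewise treats $L(a)$ as the decoupled sum of the drift Laplacian in $y$ and the (rescaled) spherical Laplacian in $\om$, reads off the non-positive eigenvalues from the known low-lying spectra of the two factors (constants, $y^i$, $ay^iy^j-\delta_{ij}$ for $-\Lc_a$; constants, $\om^l$ for $-\Lap_\om$), and takes products of these as eigenfunctions. The only difference is that you additionally spell out the linearization of $F_a'$ at $v\equiv\sqrt{k/a}$ and the general eigenvalue formula $a\abs{\alpha}+\tfrac{a}{k}\ell(\ell+k-1)-2a$, which the paper leaves to the cited literature.
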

	\begin{remark}
		Notice that we do not normalize \eqref{2.4},\eqref{2.5} and \eqref{2.7}. The point is that
		these correspond to various components of the Fr\'echet derivative
		$\di_\si W(\si)$, where $W$ is as in \eqref{W}.
	\end{remark}
	\begin{proof}
	Here we only remark, after \cite{MR3374960,MR2465296, MR3397388, MR3803553,MR4303943}, that
	the eigenvalues and eigenfunctions in \eqref{2.4}-\eqref{2.8}
	can be deduced from the spectral properties of the 
	two components in \eqref{2.3}.
	Indeed, 
	the drift Laplacian $-\Lc_a\ge0$ has 
	\begin{align}
		\label{2.9}
		&\text{eigenvalue }0,\quad \text{with eigenfunction }\phi^{0,0,0}\equiv 1,\\
		\label{2.10}
		&\text{eigenvalue }a,\quad \text{with eigenfunctions }\phi^{1,i,0}:=y^i,\\
		\label{2.11}
		&\text{eigenvalue }2a,\quad \text{with eigenfunctions }\phi^{2,i,j}:ay^iy^j-\delta_{ij}.
	\end{align}
	The spherical Laplacian $\Lap_{\Sb^k}\ge0$ has 
	\begin{align}
		\label{2.12}
		&\text{eigenvalue }0,\quad \text{with eigenfunction }Y^{0,0}\equiv1,\\
		\label{2.13}
		&\text{eigenvalue }k,\quad \text{with eigenfunctions }Y^{1,l}:=\om^l.
	\end{align}
	Since the operator $L(a)$ in \eqref{2.3} is decoupled, \eqref{2.9}-\eqref{2.13}
	give the following
	eigenfunctions of $L(a)$:
	 $$\Si^{(m,n)(i,j,k)}=\phi^{m,i,j}Y^{n,l}.$$ 
	These correspond to the only non-positive eigenvalues of $L(a)$. 
	\end{proof}

	Thus, with $L_a$ given in \eqref{2.3}, we can rewrite \eqref{2.1} as 
	\begin{equation}
		\label{2.1'}
		\dot \xi= -L(a)\xi-N(a,\xi)-\di_\si W(\si)\dot \si,
	\end{equation}
	where the nonlinearity $N(a,\xi)$ is defined by this expansion.
	This map is calculated
	  explicitly in \eqref{N}.
	 In \secref{sec:C}, we study some key properties of the nonlinearity.

	\subsection{The modulation equation}
	\label{sec:2.3}
	In this subsection we study \eqref{2.1'},
	which is equivalent to \eqref{2.1} and the rescaled MCF, \eqref{1.6}.
	Equation \eqref{2.1'} is the central subject in the remaining sections.

	It is important to note that \textit{some, but not all of} the zero-unstable modes of
	the linearized operator $L(a)$
	are due to broken symmetries. Indeed, for $\Si^{(m,n)}(a)$ 
	with $(m,n)=(0,0),(0,1),(1,1)$, there exists a path $\si(s)\in \Si$
	(the manifold defined in \eqref{Si}) s.th.
	$$  \Si^{(m,n)(i,j,l)}(a) =\di_s\vert_{s=0} T_{\si(s)} Y_a.$$
	Here $T_\si=T_{g,z,a}$ denotes the action of symmetry
	on the graph function,
	 derived from
	\eqref{1.1}, and $Y_a$
	is the cylinder from \eqref{1.7}.
	For example,  take $\si(s)=(g_0,z(s),a_0)$
	with a path $z(s)\in \Rb^{k+1},\,\di_s\vert_{s=0}z(s)=\l^{-1}e^1$.
	Then 
	$$\di_s\vert_{s=0} T_{\si(s)} Y_{a_0}=\di_s\vert_{s=0}  (\sqrt{k/a_0}+z_l(s)\om^l)=\Si^{(0,1),(0,0,1)}(a_0).$$
	Consequently, with a proper choice of $\si$ as a function of $\xi$,
	we can eliminate these modes.
	This is the content of the next lemma.


	\begin{lemma}
		\label{lem2.3}
		Suppose the pair $(\si,\xi)$ is a global solution to \eqref{2.1},
		s.th. $$\inn{\xi(0)}{\Si^{(m,n)}(a(0))}_{a(0)}=0$$ for $(m,n)=(0,0),(0,1),(1,1)$.
		Then $\xi$ satisfies the orthogonality condition
		\begin{equation}
			\label{2.14}
			\inn{\xi(\tau)}{\Si^{(m,n)}(a(\tau))}_{a(\tau)},\quad \tau\ge0,\;(m,n)=(0,0),(0,1),(1,1),
		\end{equation}
		if and only if $\si=(g,z,a)$ satisfies the modulation equations:
		\begin{align}
			\label{2.17}
\begin{split}
				\norm{\Si^{(1,1)(j,0,l)}(a)}^2_{0,a}\dot g_{n-k+l,j}=&-\inn{N(a,\xi)}{\Si^{(1,1)(j,0,l)}(a)}_a,
\end{split}\\
			\label{2.16}
\begin{split}
				\norm{\Si^{(0,1)(0,0,l)}(a)}^2_{0,a}\dot z_l=&a\inn{\xi}{\Si^{(0,1)(0,0,l)}(a)}_a\\&-\inn{N(a,\xi)}{\Si^{(0,1)(0,0,l)}(a)}_a\\&+\inn{\xi}{\di_\tau \Si^{(0,1)(0,0,l)}(a)}_a,
\end{split}
\\
			\label{2.15}
\begin{split}
				\norm{\Si^{(0,0)}(a)}^2_{0,a}\dot a=&2a\inn{\xi}{\Si^{(0,0)}(a)}_a\\&-\inn{N(a,\xi)}{\Si^{(0,0)}(a)}_a+\inn{\xi}{\di_\tau \Si^{(0,0)}(a)}_a.
\end{split}
		\end{align}
	\end{lemma}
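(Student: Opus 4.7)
The plan is to differentiate each orthogonality condition in \eqref{2.14} with respect to $\tau$, substitute \eqref{2.1'} for $\dot\xi$, and identify \eqref{2.17}--\eqref{2.15} as exactly the conditions that make the resulting expression vanish.

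Fix $(m,n)\in\{(0,0),(0,1),(1,1)\}$ and set $I^{(m,n)}(\tau):=\inn{\xi(\tau)}{\Sigma^{(m,n)}(a(\tau))}_{a(\tau)}$. I would compute $\dot I^{(m,n)}$ via the product rule, obtaining contributions from $\dot\xi$, from $\di_\tau\Sigma^{(m,n)}(a)$ through the $a$-dependence of the eigenfunction (which vanishes when $(m,n)=(1,1)$, since $\Sigma^{(1,1)(j,0,l)}=y^j\omega^l$ is $a$-independent), and from differentiating the Gaussian weight $\rho_{a(\tau)}$ in the inner product. Substituting $\dot\xi=-L(a)\xi-N(a,\xi)-\di_\sigma W(\sigma)\dot\sigma$ from \eqref{2.1'} and moving $L(a)$ onto $\Sigma^{(m,n)}$ by self-adjointness (\lemref{lem2.2}) yields $-\inn{L(a)\xi}{\Sigma^{(m,n)}}_a=-\lambda_{(m,n)}\inn{\xi}{\Sigma^{(m,n)}}_a$ with $\lambda_{(m,n)}\in\{0,-a,-2a\}$, which accounts for the $a$ and $2a$ prefactors in \eqref{2.16} and \eqref{2.15}. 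The crucial algebraic fact is that direct differentiation of \eqref{W} gives
\[
\di_a W=\Sigma^{(0,0)}(a),\quad \di_{g_{n-k+l,j}}W=\Sigma^{(1,1)(j,0,l)}(a),\quad \di_{z_l}W=\Sigma^{(0,1)(0,0,l)}(a),
\]
so that $\di_\sigma W(\sigma)\dot\sigma$ is itself a linear combination of precisely the three families of eigenfunctions listed in \eqref{2.14}. By the mutual orthogonality of these families (\lemref{lem2.2}), the projection $\inn{\di_\sigma W(\sigma)\dot\sigma}{\Sigma^{(m,n)(i,j,l)}}_a$ collapses to $\dot\sigma_{(m,n)(i,j,l)}\norm{\Sigma^{(m,n)(i,j,l)}}^2_{0,a}$ with all cross-terms vanishing; solving $\dot I^{(m,n)}=0$ for this $\dot\sigma$-coefficient produces exactly the right-hand sides of \eqref{2.17}--\eqref{2.15}.

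This gives both directions of the equivalence in one computation. If $I^{(m,n)}\equiv 0$ then $\dot I^{(m,n)}\equiv 0$ and rearrangement yields the modulation equations. Conversely, under \eqref{2.17}--\eqref{2.15} the expression for $\dot I^{(m,n)}$ collapses to the scalar linear ODE $\dot I^{(m,n)}=-\lambda_{(m,n)}I^{(m,n)}$, whose unique solution with initial datum $I^{(m,n)}(0)=0$ is identically zero. The main bookkeeping challenge is keeping track of the three sources of $\tau$-dependence in $I^{(m,n)}$ --- the path $\xi(\tau)$, the eigenfunction $\Sigma^{(m,n)}(a(\tau))$, and the Gaussian weight $\rho_{a(\tau)}$ --- and verifying that the combined contribution from $\di_\tau\Sigma^{(m,n)}$ and the measure derivative reproduces precisely the $\inn{\xi}{\di_\tau\Sigma^{(m,n)}(a)}_a$ terms on the right-hand sides of \eqref{2.16}--\eqref{2.15}, with no such term appearing in \eqref{2.17} because $\Sigma^{(1,1)(j,0,l)}$ is $a$-independent.
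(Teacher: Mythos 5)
Your proposal is correct and follows essentially the same route as the paper's proof: differentiate the orthogonality conditions, substitute \eqref{2.1'} for $\dot\xi$, use self-adjointness of $L(a)$ together with the identification of the components of $\di_\si W(\si)$ with the eigenfunctions $\Si^{(0,0)},\Si^{(0,1)},\Si^{(1,1)}$ and their mutual orthogonality, and solve for the $\dot\si$-components. One small correction: since the modulation equations \eqref{2.16}--\eqref{2.15} retain the terms $a\inn{\xi}{\Si^{(0,1)}}_a$ and $2a\inn{\xi}{\Si^{(0,0)}}_a$, substituting them back yields $\dot I^{(m,n)}=0$ rather than $\dot I^{(m,n)}=-\lambda_{(m,n)}I^{(m,n)}$, but your conclusion that $I^{(m,n)}\equiv 0$ follows from zero initial data holds either way.
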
 
\begin{remark}
	\label{rem7}
	The terminology \textit{modulation equations} is common in the study of solitary wave dynamics, e.g. \cites{MR2094474,MR2351368,MR1071238,MR1170476},
	and refers to equations of the form \eqref{2.17}-\eqref{2.15},
	derived from a condition as \eqref{2.14}. We adopt the same terminology
	here. 
\end{remark}
\begin{proof}
	Differentiating both sides of \eqref{2.14} w.r.t. $\tau$, we find that
	\eqref{2.14} holds for all $\tau\ge0$ if and only if \begin{enumerate}
		\item  \eqref{2.14} holds for $\tau=0$;
	\item  For $\tau>0$, there holds
	\begin{equation}\label{2.15'}
		\inn{\dot\xi(\tau)}{\Si^{(m,n)}(a(\tau))}_{a(\tau)}=-\inn{\xi(\tau)}{\di_\tau \Si^{(m,n)}(a(\tau))}_{a(\tau)}.
	\end{equation}
	\end{enumerate}
	The first point is in the assumption.
	For the second point, at time $\tau$, taking the $a(\tau)$-weighted inner product (see \eqref{inn}) of 
	both sides of \eqref{2.1'}, using the spectral property of $L(a)$ shown in \lemref{lem2.2},
	 together with the orthogonality among $\Si^{(m,n)}$ as $L(a)$ is self-adjoint in $X^s(a)$,
	we conclude that  \eqref{2.15'} is equivalent to \eqref{2.15}.
\end{proof}

	From \eqref{2.17}-\eqref{2.15} we see that if we have a priori
	control of $\xi$ in $X^s$-norm, then we can ensure that the path $\si(\tau)$
	stays approximately constant.
	This is the usual low velocity condition for an adiabatic theory.

	\section{The quadratic correction $\Phi$ and its properties}
	\label{sec:3}
	In this section we define the key map $\Phi$ in \thmref{thm1}, which originates 
	from a fixed point scheme. Then we prove \thmref{thm1}, assuming the results in Sects. \ref{sec:4}-\ref{sec:6}. These results are inspired by \cite{MR2480603} and we use
	some of the  notations there.
	
	From now on we will often work with $X^s(a)$ spaces with different 
	$a$. In order to facilitate this effort, we 
	introduce the following notion of admissible paths:
	\begin{definition}
		\label{defn2.1}
		Fix $0<\delta\ll1$. A path $\si(\tau)=(g(\tau),z(\tau),a(\tau))\in \Si$ is admissible if the following holds:
		\begin{enumerate}
			\item The map $\tau \mapsto \si(\tau )$ is continuous and locally Lipschitz.
			\item The entire path $a(\tau)$ lies in the $\delta$-neighbourhood
			of $a(0)$, i.e.
			$$
			\sup_{\tau\ge0} \abs{a(\tau)-a(0)}\le \delta.
			$$
			\item Initially, $a(0)\ge 2\delta+ 1/2$;
			\item The path $z(\tau)<\sqrt{k/a} e^{\int^\tau a}$, or equivalently $z(\tau)/\l(\tau)< \sqrt{k/a}$
			for all $\tau$. 
		\end{enumerate}
	\end{definition} 
	
	Item 1 above is a standard regularity requirement to
	make sense of the velocity of the path $\si$.
	Items 2-3, which are the most important ones,
	are imposed for several reasons.
	First,  we want to study \eqref{2.1}
	in the space $X^s\equiv X^s(1/2)$, while from time to time 
	we consider the stronger norm $\norm{\cdot}_{s,a(\tau)}$.
	These conditions ensure  $a(\tau)\ge1/2$ and therefore $X^s\subset X^s(a(\tau))$ for all $\tau$
	by \eqref{2.0}. 
	Secondly, for some estimates, e.g. to bound a  projection into
	an eigenspace of $L(a)$, we need $a$ to stay within a 
	fixed range, so that these estimates can be made independent of $a$.
	Thirdly, for some other estimates, e.g. the nonlinear estimate \eqref{A.3},
	we need $a(\tau)$ to stay a fixed distance away from the
	pivotal number $1/2$.
	Item 4 above is imposed so that the function $W$ in \eqref{W} remains bounded. 

	\subsection{The definition of $\Psi$}
	\label{sec:3.1}
 	In this subsection, we introduce another key map $\Psi$. This map
 	goes into the definition of the map $\Phi$ in \thmref{thm1}.

	Consider the following linear equation obtained by freezing coefficients in \eqref{2.1'}
	and the modulation equations \eqref{2.17}-\eqref{2.15} at a fixed path
	\begin{equation}\label{3.0}
		(\si^{(0)},\xi^{(0)})\in Lip([0,\infty),\Si)\times (C([0,\infty),X^s)\cap C^1([0,\infty),X^{s-2})),
	\end{equation}
	given as follows:
	\begin{align}
		\label{3.4}
		&\dot \xi=-L(a^{(0)})\xi -N(a^{(0)},\xi^{(0)})-\di_\si W(\si^{(0)})\dot\si,\\
		\label{3.5}
		&\dot \si = \vec F(\si^{(0)})\xi+\vec M(\si^{(0)},\xi^{(0)}),
	\end{align}
	where $\vec F,\,\vec M$ are defined by isolating the linear and nonlinear terms in
	$\xi$ respectively in \eqref{2.17}-\eqref{2.15}. For instance, 
	the last entry of $\vec F(\si^{(0)})\xi$ is given by 
	$$-\frac{1}{\norm{\Si^{(0,0)}(a^{(0)})}_{a^{(0)}}^2}\del{2a^{(0)}\inn{\xi}{\Si^{(0,0)}(a^{(0)})}_{a^{(0)}}+\inn{\xi}{\di_\tau \Si^{(0,0)}(a^{(0)})}_{a^{(0)}}},$$
	and the last entry of $\vec M(\si^{(0)},\xi^{(0)})$ is given by
	$$-\frac{1}{\norm{\Si^{(0,0)}(a^{(0)})}_{a^{(0)}}^2}\inn{N(a^{(0)},\xi^{(0)})}{\Si^{(0,0)}(a^{(0)})}_{a^{(0)}}.$$
	To \eqref{3.4}-\eqref{3.5} we associate the initial configuration
	\begin{align}
		\label{3.6}
		& \si(0)=(\delta_{ij},0,a_0)\quad \text{for some fixed }a_0>1/2,\\
		\label{3.7}
		&\xi(0)= \eta_0+\beta_i \Si^{(1,0)(i,0,0)}(a_0)+\g_{ij}\Si^{(2,0),(i,j,0) }(a_0),
	\end{align}
	where $\eta_0\in X^s(a_0)$ is fixed, and 
	and $\beta_i,\g_{ij}\in\Rb$ are to be chosen later as functions of $\eta_0$.
	
	In the remaining of this paper, the central object is the
	 Cauchy problem \eqref{3.4}-\eqref{3.7}.
	In Appendix \ref{sec:A}, we show the linear system \eqref{3.4}-\eqref{3.5}
	is equivalent to a single linear equation 
	\begin{equation}
		\label{3.8}
		\dot \xi+L(a^{(0)})\xi= -\W (\si^{(0)})\xi- \tilde N(a^{(0)},\xi^{(0)}),
	\end{equation}
	where the maps $\W,\,\tilde{N}$ are defined in \eqref{A.1}.
	We also show the Cauchy problem \eqref{3.4}-\eqref{3.7}
	 has a unique global solution in the space \eqref{3.0}.

	Now, we set up a contraction scheme in the following space:
	\begin{definition}
		\label{defn3}
		Fix $0<\delta\ll1$. 
		The space $\Ac_\delta=\Ac_\delta^\si\times \Ac_\delta^\xi$ consists of  $$(\si,\xi)\in Lip([0,\infty),\Si)\times (C([0,\infty),X^s)\cap C^1([0,\infty),X^{s-2})),$$
		s.th. the following holds:
		\begin{enumerate}
			\item $\si(0)$ is as in \eqref{3.6}, with $a_0\ge \tfrac{1}{2}+2\delta$;
			\item For some fixed $c_0>0$, there hold the decay estimates
		\begin{align}
			\label{3.1} 
			&\abs{\dot\si (\tau)}  \le c_0 \delta \br{\tau}^{-2},\quad \tau \ge0,\\ 
			\label{3.2}
			&\norm{\xi(\tau)}_s\le \delta\br{\tau}^{-2},\quad \tau \ge0;
		\end{align}
	\item 
	Let $b:=\tfrac{1}{2}-4\delta.$ 
	There holds the pivot condition from \lemref{lemB.0}, i.e. 
	\begin{equation}
		\label{3.3'}
		\norm{\xi(\tau)}^2_{s,b}\le c,\quad \tau \ge0,
	\end{equation}
for some fixed $c>0$. 
		\end{enumerate}
	\end{definition}

	Clearly, $\Ac_\delta$ is not empty, as the pair $(\si,\xi) \equiv(\si(0),0)$ lies 
	in this space.
	
	Item 1 above amounts to fixing an initial cylindrical 
	coordinate. Up to a rigid motion in $\Rb^{n+1}$ and an initial dilation,
	\eqref{3.6} can be replaced by any other parameters. 
	The decay conditions in Item 2, \eqref{3.1}-\eqref{3.2},
	are the most important ones here, for the following reasons.
	First, these correspond to the claimed decay
	properties from \thmref{thm1},
	and finding a fixed point in $\Ac_\delta$
	amounts to constructing
	a global solution to \eqref{3.8}.
	Secondly, if $\si\in\Ac_\delta^\si$,  then $\si$ is admissible as in \defnref{defn2.1},
	and we have the convenient properties mentioned earlier.
	Item 4 has to do with the interpolation from \lemref{lemB.0},
	whose importance is explained in \secref{sec:B}.
	Of course, if \eqref{3.3'} holds, 
	then $\xi(\tau)\in X^s(b)$. But we never
	use this fact other than a pivot condition for
	deriving estimates in $X^s$.

	Consider the solution map 
	\begin{equation}
		\label{3.9}
		\Psi: (\si^{(0)},\xi^{(0)})\mapsto \text{ the unique solution $(\si,\xi)$ to \eqref{3.4}-\eqref{3.7}}.
	\end{equation} 
	In \lemref{lemA.1}, we show this map is well-defined.
	Hereafter we want to show that 
	\begin{enumerate}
		\item $\Psi(\Ac_\delta)\subset \Ac_\delta$;
		\item  $\Psi:\Ac_\delta\to \Ac_\delta$ is a contraction w.r.t. a suitable norm on $\Ac_\delta$.
	\end{enumerate}
%
		In \secref{sec:4}, we show the first point holds for appropriately chosen initial
		configurations \eqref{3.7}. 
		In \secref{sec:5} we show the second point holds
	if $\delta\ll1$. 
	
	\begin{remark}\label{remP}
		Consider the initial configuration \eqref{3.7}.
		The constants $\beta_i$ and $\g_{ij}$ are to be determined later
			as a function of $\eta_0\in X^s(a_0)$.
	This way
	the map $\Psi$ depends only on the function $\eta_0$, i.e. $\Psi=\Psi(\cdot,\eta_0)$.
	
		Indeed, if $\eta_0=0$, then it is easy to see that the fixed point
	of $\Psi(\cdot,0)$ is just the vector $(\si,\xi) \equiv(\si(0),0)$ in $\Ac_\delta$.
	This corresponds to the trivial static solution of \eqref{2.1'},
	namely the cylinder of radius $\sqrt{k/a_0}$. 
	On the other hand,
	if $\eta_0\ne0$, then in general $\Psi(0,\eta_0)\ne0$ by \eqref{3.9}.
	
	 For simplicity of notation, most of the time
	we do not write
	$\Psi=\Psi(\cdot,\eta_0)$ but let this be understood.

	\end{remark}

	\subsection{The definition of $\Phi$}\label{sec:3.1.1}
	For a function $\eta$, the map $\Phi$ is defined 
	in terms of the fixed point of $\Psi=\Psi(\cdot,\eta)$ (see \remref{remP}). 
	But first, we need to introduce a suitable parameter space for $\Phi$.
	\begin{definition}\label{defn3.2}
		Let $a_0>1/2$ be as in \eqref{3.6}.
		Let $0<\delta\ll1,\,0<b<1/2$ be as in \defnref{defn3}.
				Let $C>0$ be a large constant depending on
				the absolute, implicit constant in \eqref{5.01}.
		
		The space $$\Bc_\delta\subset X^s(b)\subset X^s\subset  X^s(a_0)$$ consists of all
		functions $\eta=\eta(y,\om)$ s.th.
		\begin{align}
			\label{e1} &\norm{\eta}_s<\delta,\\
			\label{e2}&\norm\eta_{s,b}^2\le c/2, \quad \text{where the number $c>0$ 
		is as in \eqref{3.3'}},\\
	\label{e3}
		&\text{$\eta(y,\om)\ge\delta$ and $\abs{\eta(y,\om)}\ge C\delta^2\abs{y}^2$
		for $\abs{y}^2\ge C^{-1}\delta^{-1}$.}
		\end{align}
		
		The space $\Sc=X^s(a_0)$ 
		is the orthogonal complement to all the zero-unstable modes in \eqref{2.4}-\eqref{2.8}  of 
		the linearized operator $L(a_0)$ w.r.t. the $X^0(a_0)$-inner product defined in \eqref{inn}.
	\end{definition}
	
	We will explain the the meaning of this parameter space below.
	For now, observe
	that the codimension
	of $\Sc$ is the sum of the multipliciteis of all the non-positive 
	eigenvalues of $L(a_0)$, which equals to 
	$$\text{codim}\, \Sc =n+2+\frac{(n-k)(n-k+3)}{2}.$$
	Notice that, as pointed out in \cite{MR3374960}*{Eqn. (3.44)}, not 
	all of the zero-unstable modes in \eqref{2.4}-\eqref{2.8} are linearly independent.
	
		Now we define the key map $\Phi$ in \thmref{thm1}.
	\begin{definition}[the quadratic correction $\Phi$]
		\label{Phi}
		For $\eta_0\in\Bc_\delta\cap \Sc$ as in \defnref{defn3.2},
		define 
		\begin{equation}
			\label{3.10}
			\Phi(\eta_0):= \beta_i(\eta_0) \Si^{(1,0)(i,0,0)}(a_0)+\g_{ij}(\eta_0)\Si^{(2,0),(i,j,0) }(a_0),
		\end{equation}
		where $\beta_i,\,\g_{ij}$ are defined in \thmref{thm5.1}.
	\end{definition}

\begin{remark}\label{remE}
		Consider the requirements from \defnref{defn3.2}.
	In view of the quadratic estimates \eqref{1.9.1},
	for sufficiently small $\delta$, conditions
	\eqref{e1}-\eqref{e2} ensure the compatibility 
	to impose the   initial condition
	$\xi(0)=\eta_0+\Phi(\eta_0)$ for 
	a path $\xi\in\Ac_\delta^\xi$. (I.e. this ensures $\norm{\xi(0)}_s\le\delta$ for
	sufficiently small $\delta$).
	Condition
	\eqref{e3} has to to with the geometric interpretation 
	of \thmref{thm1}.
	Indeed, for sufficiently small $\delta$, by the definition  \eqref{3.10} above
	and the formulae \eqref{2.6}, \eqref{2.8}, condition
	\eqref{e3} ensures the function $\eta_0+\Phi(\eta_0)\ge0$
	on the entire cylinder.
	In this case, we can interpret this function as a normal graph over the 
	the cylinder, parametrizing the hypersurface
	$$Y_0=(y,(\sqrt{k/a_0}+\eta_0+\Phi(\eta_0))\om),\quad y\in\Rb^{n-k},\,\om\in\Sb^k.$$
	By the avoidance principle for MCF, 
	it follows that a flow satisfying \eqref{1.6} 
	with initial configuration $Y_0$ above remains 
	to be a normal graph over the cylinder of radius $\sqrt{k/a_0}$ for all time.
	This justifies the geometric meaning for the analysis of  PDEs
	in terms of the graph function $\xi$
	in the remaining sections. 
\end{remark}

	Using the  modulation equations from \secref{sec:2.3}, we can ensure that
	a flow generated by a function $\eta_0\in \Bc_\delta\cap \Sc$ under
	\eqref{2.1'} remains orthogonal at all $\tau\ge0$ 
	to all of the zero-unstable modes
	that are due to broken symmetry (see a discussion at the beginning
	of \secref{sec:2.3}). 
	Yet, this does not suffice to give any dissipative
	estimate because of the remaining zero-unstable modes that 
	cannot be eliminated by the modulation equations, namely 
	$\Si^{(1,0)}$ (horizontal translation) and $\Si^{(2,0)}$ (shape instability).
	The former is protected by the symmetry of cylinder lying along the $y$-axis.
	The latter is not due to any symmetry.

\subsection{Proof of the Main Results}
\label{sec:3.2}
In this subsection we assume \thmref{thm4.1}-\thmref{thm6.1} hold. 
Then we prove the main results from \secref{sec:1}.

	\begin{proof}[Proof of \thmref{thm1}, assuming \thmref{thm4.1}-\thmref{thm6.1}]
		By construction, the fixed point $(\si,\xi)$ in $\Ac_\delta$ of the map $\Psi$
		solves the graphical rescaled MCF \eqref{2.1'}, coupled to the modulation
		equations \eqref{2.17}-\eqref{2.15}, 
		with initial configuration given by 
		$$\si(0)=(\delta_{ij},0,a_0),\quad \xi(0)=\eta_0+\Phi(\eta_0).$$
		This amounts to a global solution to the rescaled MCF \eqref{1.6} 
		satisfying
		\begin{equation}\label{3.11}
			\dot \si=\vec F(\si)\xi+\vec M(\si,\xi), \quad Y(t)=(y,(\sqrt{k/a(t)}+\xi(t))\om),
		\end{equation}
		together with the initial configuration
		\begin{equation}\label{3.12}
			\si(0)=(\delta_{ij},0,a_0),\quad Y(0)=(y,(\sqrt{k/a_0}+\eta_0+\Phi(\eta_0))\om).
		\end{equation}
		Estimates \eqref{1.9.1}-\eqref{1.9.2} are the content of 
		\thmref{thm6.1}.
		Thus Parts 1-2 of the theorem is proved. 
		The positivity of $\xi$ follows from the fact that 
		$\xi(0)\ge0$, which holds by \eqref{e3}, and the avoidance principle for MCF,
		c.f. \remref{remE}.
		The remainder estimates 
		\eqref{1.13}-\eqref{1.14} come from
		the decay condition in the space $\Ac_\delta$ in which
		$\Psi$ is a contraction, namely \eqref{3.1}-\eqref{3.2}.

			\end{proof}
		
		\begin{proof}[Proof of \corref{cor1}]
	Without loss of generality, suppose $t_0=0$.
	Let  $X$ be a maximal solution to \eqref{MCF},
	and suppose for some $\l_0>0$,
	the hypersurface  $\l_0^{-1}X(\cdot,0)\in M$ with $M$ given in \eqref{M}.
		Then, by \thmref{thm1}, $X$ is of the form \eqref{1.8'}
		for all $t<T$, and $X$ is uniquely determined
		by the pair $\si(\tau(t)),\xi(y(x,t),\om,\tau(t))$
		constructed in \thmref{thm5.1} below.

		In terms of the slow time variable $\tau$,
		the Cauchy problem for $\si$ in \eqref{1.11}-\eqref{1.12}
		uniquely determines the 
		axis 
		and radius of the limit cylinder given the initial profile. 
		This system of ODEs does not 
		depend on taking any particular  sequence of $\l\to\infty$
		in the rescaling procedure,
		and the solution to the Cauchy problem is unique by \thmref{thm5.1}. 
		Thus \corref{cor1} follows.
		\end{proof}

	\section{The mapping property of $\Psi$}\label{sec:4}
	In this section we prove that for appropriately chosen constants
	$\beta_i,\,\g_{ij}$ in \eqref{3.7}
	the $\Psi$ maps from $\Ac_\delta$ into itself.
	
	Recall that $\Psi$ maps a fixed path $(\si^{(0)},\xi^{(0)})\in \Ac_\delta$
	to the global solution to \eqref{3.8} with initial configuration \eqref{3.6}-\eqref{3.7},
	constructed explicitly in \lemref{lemA.1}.
	
	\begin{theorem}
		\label{thm4.1}
		For every $\eta_0\in\Bc_\delta\cap\Sc$ as in \defnref{defn3.2}
		and every fixed path $(\si^{(0)},\xi^{(0)})\in \Ac_\delta$, there exist
		unique coefficients $\beta_i,\,\g_{ij}$, depending on the choice of
		$\si^{(0)},\,\xi^{(0)}$ only,
		s.th. the solution to \eqref{3.4}-\eqref{3.7} lies in
		$\Ac_\delta$.
		
		Moreover, 
		 there hold
		the quadratic estimates
		\begin{align}
			\label{4.1}
			&\abs{\beta_i(\si^{(0)},\xi^{(0)})}\ls \delta^2,\\
			\label{4.1'}
			&\abs{\g_{ij}(\si^{(0)},\xi^{(0)})}\ls \delta^2.
		\end{align}
	
		Moreover, for two given paths $\si^{(m)},\,\xi^{(m)},m=1,2$, there hold
		the Lipschitz estimates
		\begin{align}
			\label{4.2}
			&\abs{\beta_i(\si^{(0)},\xi^{(0)})-\beta_i(\si^{(1)},\xi^{(1)})}\ls \delta\norm{(\si^{(0)},\xi^{(0)})-(\si^{(1)},\xi^{(1)}) },\\
			\label{4.2'}
			&\abs{\g_{ij}(\si^{(0)},\xi^{(0)})-\g_{ij}(\si^{(1)},\xi^{(1)})}\ls\delta \norm{(\si^{(0)},\xi^{(0)})-(\si^{(1)},\xi^{(1)}) },\\
		\end{align}
	where the norm in the r.h.s. is defined in \eqref{5.0}.

	\end{theorem}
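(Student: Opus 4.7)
The plan is to follow Schlag's renormalization strategy: among the non-positive spectrum of $L(a_0)$, the modulation equations \eqref{2.17}-\eqref{2.15} already annihilate the contributions from $\Si^{(0,0)},\Si^{(0,1)},\Si^{(1,1)}$; what remains are the unstable mode $\Si^{(1,0)}$ (eigenvalue $-a$, axial translation) and the zero mode $\Si^{(2,0)}$ (the non-symmetry shape mode). I would determine $\beta_i$ and $\g_{ij}$ as implicit functions of $(\si^{(0)},\xi^{(0)})$ by demanding that the $\Si^{(1,0)(i,0,0)}(a_0)$- and $\Si^{(2,0)(i,j,0)}(a_0)$-components of the resulting solution to the frozen linear problem \eqref{3.8} decay at the $\br{\tau}^{-2}$-rate dictated by membership in $\Ac_\delta$.

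Concretely, let $U(\tau,s)$ denote the two-parameter propagator associated with $L(a^{(0)}(\tau))+\W(\si^{(0)}(\tau))$ constructed in \lemref{lemA.1}, so that the Duhamel formula reads
$$\xi(\tau)=U(\tau,0)\xi(0)-\int_0^\tau U(\tau,s)\,\tilde N(a^{(0)}(s),\xi^{(0)}(s))\,ds.$$
Since $\si^{(0)}\in\Ac_\delta^\si$ forces $\abs{a^{(0)}(\tau)-a_0}\le\delta$ and $\abs{\dot\si^{(0)}}\ls\delta\br{\tau}^{-2}$, the propagator $U(\tau,s)$ coincides with $e^{-(\tau-s)L(a_0)}$ up to an $O(\delta)$ correction, controllable by a second Duhamel iteration. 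Taking the $X^0(a_0)$-inner product with the eigenmodes $\Si^{(1,0)(i,0,0)}(a_0)$ and $\Si^{(2,0)(i,j,0)}(a_0)$, and using $\eta_0\in\Sc$ (so the $\eta_0$-contributions to these projections vanish), the $\Si^{(1,0)}$- and $\Si^{(2,0)}$-components of $\xi(\tau)$ become affine functions of $\beta_i$ and $\g_{ij}$ with forcing determined by $\tilde N$ and by the correction $(\W(\si^{(0)})-\W(\si(0)))\xi$.

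For the unstable mode, eliminating the $e^{a_0\tau}$-growing part produces the shooting identity
$$\beta_i\norm{\Si^{(1,0)(i,0,0)}(a_0)}_{0,a_0}^2=\int_0^\infty e^{-a_0 s}\bigl\langle \tilde N(a^{(0)}(s),\xi^{(0)}(s))+\text{corr}(s),\,\Si^{(1,0)(i,0,0)}(a_0)\bigr\rangle_{a_0}\,ds,$$
whose residue then decays exactly like $\br{\tau}^{-2}$. For the zero mode the analogous expression holds without the exponential weight, namely
$$\g_{ij}\norm{\Si^{(2,0)(i,j,0)}(a_0)}_{0,a_0}^2=-\int_0^\infty\bigl\langle \tilde N(a^{(0)}(s),\xi^{(0)}(s))+\text{corr}(s),\,\Si^{(2,0)(i,j,0)}(a_0)\bigr\rangle_{a_0}\,ds.$$
Because $\tilde N$ is at least quadratic in $\xi^{(0)}$ (as isolated in Appendix \ref{sec:C}) and $\norm{\xi^{(0)}(\tau)}_s\le\delta\br{\tau}^{-2}$, the integrands are dominated by $\delta^2\br{s}^{-4}$, so both integrals converge and yield $\abs{\beta_i},\abs{\g_{ij}}\ls\delta^2$, giving \eqref{4.1}-\eqref{4.1'}. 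Differencing the two instances $(\si^{(m)},\xi^{(m)}),\,m=1,2$, the quadratic structure of $\tilde N$ produces at most a linear-in-$\delta$ prefactor, yielding the Lipschitz bounds \eqref{4.2}-\eqref{4.2'}.

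With $\beta_i,\g_{ij}$ so fixed, membership in $\Ac_\delta$ follows from the same Duhamel analysis applied to the remaining projections: the stable-subspace part decays exponentially thanks to the spectral gap of $L(a_0)$, while the $\Si^{(1,0)}$- and $\Si^{(2,0)}$-parts decay like $\br{\tau}^{-2}$ by construction, together implying \eqref{3.2}; the ODE estimate \eqref{3.1} drops out of the modulation system \eqref{3.5} once \eqref{3.2} is in hand; and the pivot condition \eqref{3.3'} follows from the weighted-space interpolation in Appendix \ref{sec:B}. The main obstacle is the zero mode $\Si^{(2,0)}$, whose linearized semigroup supplies no decay whatsoever: the required $\br{\tau}^{-2}$-rate must be extracted entirely from the quadratic structure of $\tilde N$ combined with the decay hypothesis on $\xi^{(0)}$, and it is precisely here that Schlag's renormalization idea is indispensable.
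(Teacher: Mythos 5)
Your overall strategy is the right one and matches the paper's in spirit (modulation kills the $(0,0),(0,1),(1,1)$ modes; shooting conditions on $\beta_i,\g_{ij}$ remove the non-decaying $\Si^{(1,0)}$- and $\Si^{(2,0)}$-components; the stable part decays by coercivity). But there is a genuine gap in how you set up the mode decomposition. You project the Duhamel formula onto the \emph{frozen} eigenmodes $\Si^{(1,0)}(a_0),\,\Si^{(2,0)}(a_0)$ and treat the mismatch between $L(a^{(0)}(\tau))+\W(\si^{(0)}(\tau))$ and the frozen operator as an $O(\delta)$ correction "controllable by a second Duhamel iteration". The difficulty is that $a^{(0)}(\tau)-a_0$ is $O(\delta)$ \emph{uniformly in $\tau$} (it does not decay: \eqref{3.1} only makes $a^{(0)}$ converge to a limit within $O(\delta)$ of $a_0$), and $L(a^{(0)}(\tau))$ is self-adjoint in $X^0(a^{(0)}(\tau))$, not in $X^0(a_0)$. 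Hence the frozen $(2,0)$-amplitude equation acquires a non-decaying $O(\delta)$ coupling to the unknown $\xi$ through $(L(a^{(0)})-L(a_0))\xi$ and the inner-product mismatch. Since the zero mode has no exponential kernel, the shooting choice only yields $\abs{\tilde\g(\tau)}\ls\int_\tau^\infty\delta\norm{\xi(\tau')}_s\,d\tau'\ls\delta^2\br{\tau}^{-1}$ under the bootstrap bound $\norm{\xi}_s\le\delta\br{\tau}^{-2}$, which is short of the $\br{\tau}^{-2}$ decay required for membership in $\Ac_\delta$, so the scheme does not close as sketched (the $(1,0)$ mode is saved by the factor $e^{-a^{(0)}(\tau'-\tau)}$, but the $(2,0)$ mode — the one you correctly identify as the main obstacle — is not). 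The paper avoids this entirely by decomposing along the \emph{time-dependent} eigenmodes of $L(a^{(0)}(\tau))$: since $\di_\si W(\si^{(0)})$ maps exactly into $\ran P^{(0,0)}\oplus\ran P^{(0,1)}\oplus\ran P^{(1,1)}$ of $L(a^{(0)}(\tau))$, the projected system decouples exactly into \eqref{4.3}--\eqref{4.5} with forcings $f_i,h_{ij}$ depending only on the frozen data and obeying the quadratic bound \eqref{A.3}, so \lemref{lemB.1} gives $\br{\tau}^{-3}$ decay directly.

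A second, related gap: your shooting identities are implicit, because the correction terms involve the unknown solution $\xi$ itself. The theorem asserts that $\beta_i,\g_{ij}$ are \emph{unique} functions of $(\si^{(0)},\xi^{(0)})$ and satisfy the Lipschitz bounds \eqref{4.2}--\eqref{4.2'}; with your setup this would require an additional fixed-point argument in $(\beta_i,\g_{ij})$ (or in the solution), which you do not supply. In the paper, \eqref{4.9} and \eqref{4.13} are explicit functionals of $(\si^{(0)},\xi^{(0)})$ alone, so uniqueness is immediate and \eqref{4.2}--\eqref{4.2'} follow by differencing these formulas with the help of the Lipschitz estimate \eqref{5.9} for $N$; your one-line appeal to "the quadratic structure of $\tilde N$" glosses precisely this step. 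Also note that no global quadratic bound $\norm{N(a,\xi)}_{s-2}\ls\norm{\xi}_s^2$ is available in the Gaussian-weighted spaces — only the mode-projected estimate \eqref{A.3} — so any intermediate step of your propagator comparison that needs the full nonlinearity (rather than its projections) must be justified separately.
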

	\begin{proof}
		1. 
		Let $(\si,\xi):=\Psi(\si^{(0)},\xi^{(0)})$. We want to check this pair
		$(\si,\xi)$ satisfies \eqref{3.1}-\eqref{3.3'}.
		In view of 
		the regularity result \lemref{lemA.2}, it remains to check the decay conditions
		\eqref{3.2} only. 
		
		Indeed,  condition \eqref{3.2}
		does not hold in
		general, 
		 because in principle the solution construct in
		\lemref{lemA.1} may not decay due to the 
		zero-unstable modes that are not eliminated by the modulation equations. 
		Once the decay condition \eqref{3.2} for $\xi$ is proved, we also get the 
		velocity bound \eqref{3.1} for $\si$, from the equation \eqref{3.5}.
		
		2. Let $P^{(m,n)} (\tau)$ be the projection onto the span of the zero-unstable modes $\spn\Set{\Si^{(m,n)(i,j,l)}(a^{(0)}(\tau))}$ of $L(a^{(0)}(\tau))$,
		and write $\xi^{(m,n)}:=P^{(m,n)}\xi$.
		Let $Q:=1-\sum P^{(m,n)}$ and write $\xi_S=Q\xi=\xi-\sum \xi^{(m,n)}$.
		Here the superscript $S$ refers to  stable projection.
		
		By the orthogonality condition \eqref{A.0},
		we know $\xi^{(m,n)}=0$ for $(m,n)=(0,0),(0,1),(1,1)$.
		Now we expand 
		$$\xi=\xi_S+\sum_{m=1,2}\xi^{(m,0)},$$
		and plug this expansion into \eqref{3.8}.
		Using the orthogonality among various eigenfunctions of 
		the self-adjoint operator $L(a^{(0)}(\tau))$
		in the space $X^s(a^{(0)}(\tau))$ (notice that there
		is no essential spectrum because of the compact $\Sb^k$ factor
		in the domain), 
		we find that equation \eqref{3.8} is equivalent to the following system:
		\begin{align}
			\label{4.3}
			&\dot \xi_S-QL(a^{(0)})\xi_S=-Q N(a^{(0)},\xi^{(0)}),\\
			\label{4.4}
			&\dot \beta_i-a^{(0)}\beta_i=f_i,\\
			\label{4.5}
			&\dot \g_{ij}=h_{ij},
		\end{align}
		where
		\begin{align}
			\label{4.6}
			&f_i=-
			\inn{ N(a^{(0)},\xi^{(0)})}{\Si^{(1,0)(i,0,0)}(a^{(0)})}_{a^{(0)}},\\
		&\label{4.7}
		h_{ij}=-
\inn{ N(a^{(0)},\xi^{(0)})}{\Si^{(2,0)(i,j,0)}(a^{(0)})}_{a^{(0)}}.
	\end{align}
	(Notice that $\Si^{(1,0)}$ and $\Si^{(2,0)}$ are already normalized.)
	To get \eqref{4.6}-\eqref{4.7}, which are the projections of the
	nonlinearity into the eigenspaces of $P^{(1,0)}$ and $P^{(2,0)}$ respectively,
	we use the fact that since $\di_\si W(\si^{(0)})$ maps into $\ran P^{(0,0)}\oplus\ran P^{(0,1)}\oplus \ran P^{(1,1)}$, the terms in \eqref{3.8} involving $\W=\di_\si W\vec F$ and $\di_\si W\vec M$ all  drop out
	after taking projection into the other eigenspaces.
	
	The initial configurations associated to \eqref{4.3}-\eqref{4.5}
	are the three terms in \eqref{3.7}, respectively. 
	Since \eqref{4.3}-\eqref{4.5} are already decoupled,
		in what follows  we consider the Cauchy problem for $\xi_S,\,\beta_i,\,\g_{ij}$ separately.

		First, for \eqref{4.3}, consider the following representation formula
	of the solution $\xi_S(\tau)$ by Duhamel's principle:
	\begin{equation}\label{4.3'}
		\xi_S(\tau)=e^{-\tau \b L }\xi_S(0)-\int_0^\tau e^{-(\tau-\tau')\b L}Q N(a^{(0)}(\tau'),\xi^{(0)}(\tau'))\,d\tau'.
	\end{equation}
	Here $\b L:= QL(a^{(0)})Q$.
	Since at $\tau=0$, the function $\xi_S(0)=\eta_0$ is perpendicular to all
	the zero-unstable modes of $L(a^{(0)}(0))$ in the space $X^s(a_0)$,
	and for all $\tau>0$ the operator $\b L$ is uniformly coercive on $X^s(a^{(0)}(\tau))$ (which follows from Item 2 in \defnref{defn2.1}), 
	it follows from standard parabolic theory (see \secref{sec:A} for details)
	that there exists  $c>0$ 
	depending on $n,k$ only s.th.
	\begin{equation}
		\label{4.8'}
		\norm{\xi_S(\tau)}_{s,a(\tau)}\le e^{-c\tau}\norm{\xi_S(0)}_{s-2,a(\tau)},\quad  \tau\ge0.
	\end{equation}
	By the interpolation inequality in \lemref{lemB.0},
	it follows 
		\begin{equation}
		\label{4.8}
		\norm{\xi_S(\tau)}_{s}\le \delta e^{-c'\tau},\quad  \tau\ge0,
	\end{equation}
	where $c'$ depends on the constant $c$ in \eqref{4.8'} only,
	assuming $\beta_i,\,\g_{ij}=O(\delta^2)$ in \eqref{3.7}.
	
	Next, for \eqref{4.4},  by \lemref{lemB.1}, we find that
	$\beta_i(\tau)$ remains bounded for all $\tau$ if and only if the initial configuration is given by
	\begin{equation}
		\label{4.9}
		\beta_i(0)=-\int_{0}^\infty f_i(\tau')\l(\tau')^{-1}\,d\tau'\quad \del{\l(\tau)=e^{\int_0^\tau a(\tau')}\,d\tau'}.
	\end{equation}
	This integral indeed converges, since $\l(\tau)\ge1$, and the nonlinear estimate \eqref{A.3}
	shows $\abs{f_i(\tau)}\le\norm{P^{(1,0)}N}_{0,a^{(0)}}\ls \norm{\xi^{(0)}(\tau)}_s^2\ls \br{\tau}^{-4}.$
	For this estimate, we use the fact that $a^{(0)}$ is admissible, c.f. \defnref{defn2.1}, together with the decay condition \eqref{3.2}.
	
	\eqref{4.9} determines the choice of $\beta_i$ in \eqref{3.7}.	
	Since $f_i$ is a functional of $\si^{(0)},\xi^{(0)}$, this choice of $\beta_i$ 
	is a function of the fixed path $\si^{(0)},\xi^{(0)}$.

	Suppose \eqref{4.9} is satisfied. Then $\beta_i(\tau)$ is given by
	\begin{equation}\label{4.10}
		\beta_i(\tau)=-\int_\tau^\infty f(\tau')\frac{\l(\tau)}{\l(\tau')}\,d\tau'.
	\end{equation}
	See \lemref{lemB.1} for details. For all $\tau\ge0$, this function satisfies 
	\begin{equation}
		\label{4.11'}
		\begin{aligned}
			\abs{\beta_i(\tau)}&\le \int_\tau^\infty \abs{f_i(\tau')}e^{-\int_\tau^{\tau'}a}\,d\tau'\\&\le \int_\tau^\infty \abs{f_i(\tau')}d\tau'\\
			&\le C \delta^2\br{\tau}^{-3}.
		\end{aligned}
	\end{equation}
	Here the last inequality follows from \eqref{3.2} and the estimate
	$\abs{f_i}\le \norm{P^{(1,0)}N}_{0,a^{(0)}}\ls \norm{\xi^{(0)}}_s^2\le \delta^2\br{\tau}^{-4},$ due to the nonlinear estimate \eqref{A.3}. For the latter, 
	we need the lower bound from Item 3 in the admissibility conditions
	listed in  \defnref{defn2.1}, which holds for $\si^{(0)}\in \Ac_\delta^\si$. 
	In \eqref{4.11'}, the
	 constant $C$ depends only on the  constant in the estimate \eqref{A.3},
	which in turn depends only on the dimension $n$. 
	
		We conclude from \eqref{4.11'} that  
	\begin{equation}
		\label{4.12}
		\abs{\beta_i(0)}\ls \delta^2.
	\end{equation}
	This gives \eqref{4.1}.
	
	By \eqref{4.11'}, the elliptic estimate \eqref{A.5}, and the interpolation
	inequality \eqref{B.8},  we conclude
	\begin{equation}
		\label{4.11}
		\norm{P^{(1,0)}\xi(\tau)}_s \le \delta\br{\tau}^{-2},
	\end{equation}
	provided $\delta$ is sufficiently small,
	in the sense that $\delta^{1/3}Cc^{1/12}c_1\le1$ for the constants
	 $C$ from \eqref{4.11'},
		$c$ the pivot condition \eqref{B.7}, and $c_1$ from some elliptic
		estimate, respectively.
	Notice that the path $\xi$ satisfies the pivot condition \eqref{B.7},
	as we show in \lemref{lemA.2}. This justifies the use of the interpolation
	\eqref{A.5}.

	Lastly, for \eqref{4.5}, we note that $\g_{ij}(\tau)\to 0$ as $\tau\to 0$ if and only if
	\begin{equation}
		\label{4.13}\g_{ij}(0)=-\int_0^\infty h_{ij}.
	\end{equation}
	This defines the choice of $\g_{ij}$ in \eqref{3.7}. 
	Moreover, if \eqref{4.13} holds, then 
	\begin{equation}
		\label{4.14}
		\g_{ij}(\tau)=-\int_\tau^\infty h_{ij},
	\end{equation}
	and there holds
	\begin{equation}
		\label{4.15'}
		\abs{\g_{ij}(\tau)}\le C' \delta^2\br{\tau}^{-3}.
	\end{equation}
	Here the constant $C'$ is a constant depending on the estimate
	\eqref{A.3}, similar to the one from \eqref{4.11'} (but possibly larger).
	From here   we conclude \eqref{4.1'} by setting $\tau=0$.
	Shrinking $\delta$ if necessary, we can also deduce from \eqref{4.15'} the following estimate, whose derivation is identical to \eqref{4.11}:
	\begin{equation}
		\label{4.15}
		\norm{P^{(2,0)}\xi(\tau)}_s \le \delta \br{\tau}^{-2}.
	\end{equation}

	Combining \eqref{4.8}, \eqref{4.11}, \eqref{4.15},
	we conclude that with the unique choices \eqref{4.11}, \eqref{4.14}
	of $\beta_i,\,\g_{ij}$,
	as functions of $\xi^{(0)},\si^{(0)}$,
	the solution $\xi$ to \eqref{3.8} with initial condition
	\eqref{3.7}
	satisfies the decay condition \eqref{3.2}. For reason described at the beginning
	of the proof, this implies \eqref{3.1} holds for $\si$.
	This proves the 	 the image $(\si,\xi)=\Psi(\si^{(0)},\xi^{(0)})$ lies
	in $\Ac_\delta$.

	3. Now we prove the estimates \eqref{4.2}-\eqref{4.2'}.
	For simplicity we only prove \eqref{4.2} because 
	the other is completely analogous. 
	
	Write  $U^m:=(\si^{(m)},\xi^{(m)}),\,m=0,1$ for two paths in $\Ac_\delta$,
	and $\beta^{(m)}_i=\beta_i(U^m)$ for the constants
	in the initial configurations associated to these paths, as constructed above.
	
	By \eqref{4.9}, we find 
	\begin{equation}\label{4.16}
		\abs{\beta_i^{(1)}-\beta_i^{(0)}}\le \int_0^\infty \abs{f_i^{(1)}(\tau')-f_i^{(0)}(\tau')}\,d\tau',
	\end{equation}
	where $f_i^{(m)}$ is given by \eqref{4.6}.
	\begin{equation}
		\label{4.17}
		f_i^{(m)}=l_i^{(m)} N(U^m),
	\end{equation}
	where $ N$ is as in \eqref{3.4}, and $l_i^{(m)}:X^{s-2}\to \Rb$ is the linear
	functional  given by
	$\phi\mapsto \inn{\phi}{\Si^{(1,0)(i,0,0)}(a^{(m)})}$.

	Consider the difference of the r.h.s. of \eqref{4.17}, which we write as
	\begin{equation}
		\label{4.18}
		l_i^{(1)} N(U^1)-l_i^{(0)} N(U^0)=(l_i^{(1)}-l_i^{(0)}) N(U^1)+l_i^{(0)}( N(U^1)- N(U^0)).
	\end{equation}
		Since the paths $\si^{(m)}$ are admissible (see \defnref{defn2.1}), 
		we have the uniform estiamte $\norm{l_i^{(m)}}_{X^{s-2}\to \Rb}\ls \abs{a^{(m)}}$
	independent of $m,\,i$ and $\tau$.  Using this, we can bound the first term in the r.h.s. of \eqref{4.18} by $\abs{\si^{(1)}-\si^{(0)}}\norm{ N(U^1)}_{s-2}$.
	By \eqref{A.3} and \eqref{3.2}, this implies 
		\begin{equation}\label{4.19}
			\abs{(l_i^{(1)}-l_i^{(0)}) N(U^1)}\ls
			 \delta^2\br{\tau}^{-4}\abs{\si^{(1)}-\si^{(0)}}.
		\end{equation}
	By the Lipschitz estimate  \eqref{5.9} (see \secref{sec:C} for the proof) and \eqref{3.1}, the second term in the r.h.s.
	of \eqref{4.18} 
	can be bounded as
			\begin{equation}\label{4.20}
\begin{aligned}
	\abs{l_i^{(0)}( N(U^1)- N(U^0))}&\ls \norm{l_i^{(m)}}_{X^{s-2}\to \Rb} \norm{ N(U^1)- N(U^0)}_{s-2}\\&\ls \delta\br{\tau}^{-2} \del{\norm{\xi^{(1)}-\xi^{(0)}}_s+\abs{\si^{(1)}-\si^{(0)}}}.
\end{aligned}
	\end{equation}
Combining \eqref{4.19}-\eqref{4.20} in \eqref{4.18}, and plugging the result back to \eqref{4.16},
we find 
$$\abs{\beta_i^{(1)}-\beta_i^{(0)}}\ls  \delta\del{\int \br{\tau'}^{-2}\,d\tau'}\norm{U^1-U^0}.$$
This gives  \eqref{4.2}. The same argument gives \eqref{4.2'}.

	\end{proof}

	\section{The contraction property of $\Psi$}\label{sec:5}
	In this section we prove the map $\Psi:\Ac_\delta\to \Ac_\delta$
	defines a contraction w.r.t. the following norm:
	\begin{equation}
		\label{5.0}
		\norm{(\si,\xi)}=\sup_{\tau\ge0}(c_0^{-1}\br{\tau}\abs{\si(\tau)-\tau(0)}+\br{\tau}^{2}\norm{\xi(\tau)}_s).
	\end{equation}
	Here $c_0>0$ is the constant in \eqref{3.1}.
	This defines a  norm equivalent to the uniform one in the space 
	$C([0,\infty),\Si\times X^s)$,
	in which the 
	the space $\Ac_\delta$ from \defnref{defn3} 
	is the closed ball of  size $O(\delta)$ around zero
	w.r.t. the norm \eqref{5.0}.
	This shows $(\Ac_\delta,\norm{\cdot})$ is complete.
	
	Now we show $\Psi$ is a contraction in  $(\Ac_\delta,\norm{\cdot})$.
	This essentially implies all of the statements in \thmref{thm1}.

\begin{theorem}
	\label{thm5.1}
	The map $\Psi:\Ac_\delta\to \Ac_\delta$ defined in \eqref{3.9} is a contraction with respect to the norm \eqref{5.0}, satisfying
	\begin{equation}\label{5.00}
		\norm{\Psi(U^1)-\Psi(U^2)}\le \delta^{1/2} \norm{U^1-U^0}\quad (U^1,\,U^2\in \Ac_\delta).
	\end{equation}
	
	Consequently, there exist unique constants 
	\begin{equation}\label{5.01}
		\beta_i(\eta_0),\,\g_{ij}(\eta_0)=O(\delta^2),
	\end{equation}
s.th.
	there exists a unique solution to the nonlinear system \eqref{2.1'}, \eqref{2.17}-\eqref{2.15} with initial condition
	\eqref{3.6} and this choice of $\beta_i$, $\g_{ij}$ in \eqref{3.7}.

\end{theorem}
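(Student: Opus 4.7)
The plan is a Banach fixed point in the complete metric space $(\Ac_\delta, \norm{\cdot})$ from \eqref{5.0}. Given $U^1, U^2 \in \Ac_\delta$, write $V^m := \Psi(U^m) = (\hat\si^{(m)}, \hat\xi^{(m)})$; each is the global solution to \eqref{3.4}--\eqref{3.5} with coefficients frozen at $U^m$ and initial datum \eqref{3.7} whose parameters $\beta_i(U^m), \g_{ij}(U^m)$ were constructed in \thmref{thm4.1}. All the work goes into the contraction estimate \eqref{5.00}; once it is in hand, the Banach fixed point theorem gives the unique $U^* \in \Ac_\delta$ with $\Psi(U^*, \eta_0) = U^*$, and the values in \eqref{5.01} are simply $\beta_i(\eta_0) := \beta_i(U^*), \g_{ij}(\eta_0) := \g_{ij}(U^*)$, with the $O(\delta^2)$ bound inherited from \eqref{4.1}--\eqref{4.1'}.

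\textbf{The difference equation and its decomposition.} Subtracting the two linearised systems, $\Delta V := V^1 - V^2$ satisfies a linear equation of the form
\[
\di_\tau \Delta\hat\xi + L(a^{(0)}_1)\Delta\hat\xi = \mathcal{R}_1 + \mathcal{R}_2 + \mathcal{R}_3,
\]
where $\mathcal{R}_1 = (L(a^{(0)}_2) - L(a^{(0)}_1))\hat\xi^{(2)}$, $\mathcal{R}_2 = N(a^{(0)}_2, \xi^{(2)}) - N(a^{(0)}_1, \xi^{(1)})$, and $\mathcal{R}_3$ collects the differences of the modulation terms $\di_\si W \dot\si$. Each source is bounded pointwise in time by $\delta \br{\tau}^{-2} \norm{U^1 - U^2}$: for $\mathcal{R}_1$ this uses $\norm{\hat\xi^{(2)}}_s \lesssim \delta \br{\tau}^{-2}$ and smoothness of $L$ in $a$, for $\mathcal{R}_2$ this is exactly the nonlinear Lipschitz estimate \eqref{5.9}, and for $\mathcal{R}_3$ one combines smoothness of $W$ on the admissible region (\remref{remW}) with the velocity bound \eqref{3.1}. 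As in the proof of \thmref{thm4.1}, I project $\Delta\hat\xi$ onto $QX^s$ and onto the two remaining unstable eigenspaces $\ran P^{(1,0)}, \ran P^{(2,0)}$; the projections onto $\ran P^{(0,0)}, \ran P^{(0,1)}, \ran P^{(1,1)}$ vanish identically because the modulation equations enforce \eqref{A.0} at each $\tau$ for both $V^1$ and $V^2$.

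\textbf{Controlling the three components.} The stable projection $Q\Delta\hat\xi$ is handled by Duhamel as in \eqref{4.3'}: the initial data is common (both equal $\eta_0$), so the homogeneous term drops out, and the inhomogeneous integral is controlled by uniform coercivity of $\bar L$ on $X^s(a^{(0)}_1(\tau))$ together with the source bound $\delta\br{\tau'}^{-2}\norm{U^1-U^2}$. Translating from $X^s(a^{(0)}_1)$ back to $X^s = X^s(1/2)$ via the pivot interpolation \lemref{lemB.0} (whose hypothesis holds because $V^m \in \Ac_\delta$ satisfies \eqref{3.3'}) yields $\br{\tau}^2\norm{Q\Delta\hat\xi(\tau)}_s \lesssim \delta^{1/2}\norm{U^1-U^2}$. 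For the unstable components, writing out \eqref{4.4}--\eqref{4.5} for the differences and invoking \eqref{4.2}--\eqref{4.2'} for the initial data together with the explicit representations \eqref{4.10}, \eqref{4.14} applied termwise as in \eqref{4.18}--\eqref{4.20}, gives the same pointwise bound $\delta\br{\tau}^{-2}\norm{U^1-U^2}$ for $P^{(1,0)}\Delta\hat\xi$ and $P^{(2,0)}\Delta\hat\xi$. Finally, integrating \eqref{3.5} for $\Delta\hat\si$ and using boundedness of $\vec F, \vec M$ together with the $\hat\xi$-estimate just obtained delivers $c_0^{-1}\br{\tau}|\Delta\hat\si(\tau)| \lesssim \delta^{1/2}\norm{U^1-U^2}$.

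\textbf{Anticipated main obstacle and conclusion.} The delicate point is that the natural setting for the spectral analysis of $L(a)$ and the projections $P^{(m,n)}$ is the moving space $X^s(a^{(0)}(\tau))$, while the contraction norm \eqref{5.0} lives in the fixed pivot $X^s = X^s(1/2)$. Reconciling these via the interpolation of \lemref{lemB.0} is precisely where the weaker factor $\delta^{1/2}$ (rather than $\delta$) in \eqref{5.00} is generated, and one must be careful that every intermediate bound used is compatible with this passage. Once \eqref{5.00} is established, the Banach fixed point theorem in the closed ball $\Ac_\delta$ (complete w.r.t.\ \eqref{5.0}) produces a unique fixed point $U^* = (\si^*, \xi^*)$, and substituting into \eqref{3.4}--\eqref{3.5} recovers a solution of the nonlinear \eqref{2.1'} coupled to the modulation equations \eqref{2.17}--\eqref{2.15}, with initial datum $\xi^*(0) = \eta_0 + \beta_i(U^*)\Si^{(1,0)(i,0,0)}(a_0) + \g_{ij}(U^*)\Si^{(2,0)(i,j,0)}(a_0)$, proving \eqref{5.01} and the uniqueness statement.
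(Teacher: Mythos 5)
Your overall scheme is the same as the paper's (difference equation for $\Psi(U^1)-\Psi(U^0)$, decomposition along $Q$, $P^{(1,0)}$, $P^{(2,0)}$ and the symmetry modes, Duhamel for the stable part, ODE representations for the unstable part, then Banach fixed point with the constants in \eqref{5.01} obtained as limits of the $\beta_i,\g_{ij}$ from \thmref{thm4.1}), but there is a genuine gap in the way you treat the source terms. The piece of your $\mathcal{R}_3$ of the form $(\dot{\hat\si}^{(1)}-\dot{\hat\si}^{(2)})\,\di_\si W(\si^{(0)}_1)$ involves exactly the output difference you are trying to estimate, so asserting that it is "bounded pointwise in time by $\delta\br{\tau}^{-2}\norm{U^1-U^2}$" is circular: the velocity bound \eqref{3.1}, valid because $\Psi(U^m)\in\Ac_\delta$, only gives $\abs{\dot{\hat\si}^{(m)}}\ls\delta\br{\tau}^{-2}$ for each path separately, which does not produce a factor $\norm{U^1-U^2}$. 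The paper closes this loop structurally: $\di_\si W(\si^{(0)})$ takes values in $\ran P^{(0,0)}\oplus\ran P^{(0,1)}\oplus\ran P^{(1,1)}$, so this term simply drops out of the equations for $Q(\hat\xi^{(1)}-\hat\xi^{(2)})$ and $P^{(m,0)}(\hat\xi^{(1)}-\hat\xi^{(2)})$, $m=1,2$, and never needs to be bounded (only the genuinely Lipschitz piece $\dot{\hat\si}^{(1)}(\di_\si W(\si^{(0)}_1)-\di_\si W(\si^{(0)}_2))$ is estimated, as in \eqref{5.8}). Since your bounds for the $Q$-, $P^{(1,0)}$- and $P^{(2,0)}$-projections rest on the claimed uniform source bound, the contraction estimate does not close as written; you must either invoke this range property of $\di_\si W$, or set up a Gr\"onwall-type absorption for the term containing $\dot{\hat\si}^{(1)}-\dot{\hat\si}^{(2)}$, which you do not do.

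A second, smaller inaccuracy: the symmetry-mode projections of the difference do not "vanish identically." The orthogonality \eqref{A.0} for $\Psi(U^1)$ holds w.r.t. $a^{(0)}_1(\tau)$, while for $\Psi(U^0)$ it holds w.r.t. $a^{(0)}_2(\tau)$; relative to one fixed weight you get $P^{(m,n)}_0(\hat\xi^{(1)}-\hat\xi^{(2)})=(P^{(m,n)}_0-P^{(m,n)}_1)\hat\xi^{(1)}$, which must be (and in the paper is, cf. \eqref{5.6}) estimated via $\norm{P^{(m,n)}_0-P^{(m,n)}_1}_{s\to s}\ls\abs{\si^{(0)}_1-\si^{(0)}_2}$ together with the decay \eqref{3.2} of $\hat\xi^{(1)}$; the resulting term is of the right size, but it does not disappear. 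Finally, attributing the loss from $\delta$ to $\delta^{1/2}$ to the pivot interpolation of \lemref{lemB.0} is a misreading: in the paper $\delta^{1/2}$ merely absorbs the implicit constants in \eqref{5.5} and \eqref{5.20'} for $\delta$ small; this is cosmetic, but the two issues above need to be repaired for the proof to stand.
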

\begin{remark}
	As a by-product, this proves a global well-posedness result for the rescaled MCF \eqref{1.6}. 
		The admissible set of initial configurations
	forms a finite-codimensional manifold in $X^s$.
	The global solutions constructed here does not 
	necessarily arise from rescaling a given solution to \eqref{MCF}.
\end{remark}
\begin{proof}
	1. For simplicity, write $U^m:=(\si^{(m)},\xi^{(m)}),\,m=0,1$ for two elements in $\Ac_\delta$,
	and $U^{m+2}:=\Psi(\si^{(m)},\xi^{(m)})$.
	
	Using \eqref{3.4}-\eqref{3.7}, we find that the difference $U^3-U^2$ is the unique 
	solution to the following linear
	Cauchy problem:
	\begin{align}
\label{5.3}\begin{split}
			\di_\tau(\xi^{(3)}-\xi^{(2)})+L(a^{(0)})(\xi^{(3)}-\xi^{(2)})=& V(\si^{(0)},\si^{(1)})\xi^{(3)}\\&-(\dot \si^{(3)}-\dot\si^{(2)})\di_\si W (\si^{(0)})\\&-\dot\si^{(3)}(\di_\si W(\si^{(1)})-\di_\si W(\si^{(0)}))\\&-N(a^{(1)},\xi^{(1)})+N(a^{(0)},\xi^{(0)}),
\end{split}\\
\label{5.4}
\begin{split}
	(\xi^{(3)}-\xi^{(2)})\vert_{\tau=0}=&(\beta^{(1)}_i-\beta^{(0)}_i)\Si^{(1,0)(i,0,0)}(a_0)
\\&+(\g_{ij}^{(1)}-\g_{ij}^{(0)})\Si^{(2,0)(i,j,0)}(a_0).
\end{split}
	\end{align}
In \eqref{5.3}, the operator $V$ is defined as the difference between
the two linearized operators as $V:=L(a^{(1)})-L(a^{(0)})$. 
In \eqref{5.4}, the numbers $\beta_i^{(m)},\,\g_{ij}^{(m)},\,m=0,1$
are the unique functions of $U^m$ constructed in \thmref{thm1}.
To get \eqref{5.3}-\eqref{5.4}, as before, we use the equivalence between the system \eqref{3.4}-\eqref{3.7} in $U^m$
and the single equation in $\xi^{(m)}$, i.e. \eqref{3.8} with \eqref{3.7}.

2. The
claim now is that we have the following point-wise  estimate for the solution to \eqref{5.3}-\eqref{5.4}:
\begin{equation}	\label{5.5}
	\norm{\xi^{(3)}(\tau)-\xi^{(2)}(\tau)}_s\ls\delta\br{\tau}^{-2}\norm{U^1-U^2},\quad \tau\ge0.
\end{equation}

To derive \eqref{5.5}, we consider the projections of $\xi^{(3)}-\xi^{(2)}$ under $P^{(m,n)}_\mu(\tau):=P^{(m,n)}(a^{(\mu)}(\tau)),\,\mu=1,2$
 and $Q_\mu=1- \sum P^{(m,n)}_i$.
These operators are defined in the proof of \thmref{thm4.1}.

For $(m,n)=(0,0),(0,1),(1,1)$, it suffices to use the orthogonality condition \eqref{A.0}.
Indeed, since $P_i^{(m,n)}\xi^{\mu+2}=0$ for $\mu=1,2$, we find
we have
\begin{equation}\label{5.6}
	\begin{aligned}
		\norm{P_0^{(m,n)}(\xi^{(3)}-\xi^{(2)})}_s&=\norm{P_0^{(m,n)}\xi^{(3)}}_s\\
		&=\norm{(P^{(m,n)}_1-P^{(m,n)}_0)\xi^{(3)}}_s\\
		&\ls \delta\abs{\si^{(1)}-\si^{(0)}}\br{\tau}^{-2}.
	\end{aligned}
\end{equation}
In the last step we use the fact that $\norm{P_1-P_2}_{s\to s}\ls \abs{\si^{(1)}-\si^{(0)}}$,
together with the condition \eqref{3.2} for $\xi^{(3)}$.

To bound the other projections of $\xi^{(3)}-\xi^{(2)}$, we need to use the equation \eqref{5.3}.
First, we bound each of the terms in the r.h.s. of \eqref{5.3}.
On a bounded domain in $X^s,\,s\ge2$, by the $C^2$-regularity of the $F$-functional,
the perturbation operator $V(\si^{(0)},\si^{(1)})$ satisfies 
the Lipschitz estimate $\norm{V}_{s\to s-2}\ls \abs{\si^{(1)}-\si^{(0)}}$. 
(Alternative, one can get this using \eqref{A.8}.)
This, together with the estimate  \eqref{3.2} for $\xi^{(3)}$, implies that the first term in the r.h.s. of \eqref{5.3} satisfies
\begin{equation}
	\label{5.7}
	\norm{V(\si^{(0)},\si^{(1)})\xi^{(3)}}_{s-2}\ls \delta \abs{\si^{(1)}-\si^{(0)}}\br{\tau}^{-2}.
\end{equation}

The second term in the r.h.s. of \eqref{5.3} vanishes in the equation for 
$P_0^{(m,n)}(\xi^{(3)}-\xi^{(2)})$ with $(m,n)\ne(0,0),(0,1),(1,1)$ and $Q_0(\xi^{(3)}-\xi^{(2)})$,
because the operator $\di_\si W(\si^{(0)})$ maps into the space $\ran P_0^{(0,0)}\oplus\ran P^{(0,1)}_0\oplus \ran P^{(1,1)}_0$. Thus we do not need to consider this term.

As discussed in \remref{remW} earlier, the operator 
$\di_\si W$ is locally Lipschitz from the tangent bundle
$T\Si\to X^s$. This, together with the estimate \eqref{3.1} for 
$\abs{\dot\si^{(3)}}$ , implies  that the third term in the r.h.s. of \eqref{5.3} 
satisfies 
\begin{equation}
	\label{5.8}
	\norm{\dot\si^{(3)}(\di_\si W(\si^{(1)})-\di_\si W(\si^{(0)}))}_s\ls\delta \abs{\si^{(1)}-\si^{(0)}}\br{\tau}^{-2}.
\end{equation}

Lastly, in Appendix \ref{sec:C}, we show the estimate
\begin{equation}
	\label{5.9}
	\norm{N(a^{(1)},\xi^{(1)})-N(a^{(0)},\xi^{(0)})}_{s-2}\ls\delta\br{\tau}^{-2} \del{\norm{\xi^{(1)}-\xi^{(0)}}_s+\abs{\si^{(1)}-\si^{(0)}}},
\end{equation}
c.f. \eqref{C.2}. So far, this has the weakest decay property.

3. We conclude from \eqref{5.8}-\eqref{5.9} that
\begin{equation}
	\label{5.10}
	\norm{\text{r.h.s. of \eqref{5.3}}}_{s-2}\ls \delta\br{\tau}^{-2} \left(\norm{\xi^{(1)}-\xi^{(0)}}_s+ \abs{\si^{(1)}-\si^{(0)}}\right).
\end{equation}
A similar estimate holds if we project both sides of \eqref{5.3} with
$Q_0$ and $P_0^{m,0},\,m=1,2$. 

Below we will use \eqref{5.10} as follows: Suppose $P$ is one of the projections
$Q_0,\,P_0^{m,0},\,m=1,2$. 
Consider the integral over the r.h.s. of \eqref{5.10},
\begin{equation}\label{5.10.1}
	\int_0^\tau \br{\tau'}^{-2}\del{\norm{P(\xi^{(1)}(\tau')-\xi^{(0)}(\tau'))}_s+ \abs{\si^{(1)}(\tau')-\si^{(0)}(\tau')} }\,d\tau'.
\end{equation}
The integrand is bounded by 
$$\br{\tau}^{-3}\sup_{0\le \tau' \le \tau}\del{\br{\tau'}\norm{P(\xi^{(1)}(\tau')-\xi^{(0)}(\tau'))}_s+\br{\tau'} \abs{\si^{(1)}(\tau')-\si^{(0)}(\tau')}}.$$
Plugging this to \eqref{5.10.1}, and factoring the constant out, 
we find the estimate 
\begin{multline}\label{5.10.2}
		\int_0^\tau \br{\tau'}^{-2}\del{\norm{P(\xi^{(1)}(\tau')-\xi^{(0)}(\tau'))}_s+ \abs{\si^{(1)}(\tau')-\si^{(0)}(\tau')} }\,d\tau'\\
		\ls
	\br{\tau}^{-2}\norm{(P(\xi^{(1)}-\xi^{(0)}),\si^{(1)}-\si^{(0)})},
\end{multline}
where the norm on the r.h.s. is defined in \eqref{5.0}. The same argument gives 
the estimate 
\begin{multline}\label{5.10.3}
	\int_\tau^\infty \br{\tau'}^{-2}\del{\norm{P(\xi^{(1)}(\tau')-\xi^{(0)}(\tau'))}_s+ \abs{\si^{(1)}(\tau')-\si^{(0)}(\tau')} }\,d\tau'\\
	\ls
	\br{\tau}^{-2}\norm{(P(\xi^{(1)}-\xi^{(0)}),\si^{(1)}-\si^{(0)})}.
\end{multline}

4. Now consider the equation for $Q_0(\xi^{(3)}-\xi^{(2)})$, which can be derived from \eqref{5.3}
as \eqref{4.3} from \eqref{3.8}. Applying Duhamel's principle to this equation,
we find
\begin{equation}
	\label{5.11}
\begin{aligned}
		\norm{Q_0(\xi^{(3)}-\xi^{(2)})}_s \le& \norm{e^{-\tau \b L}(Q_0(\xi^{(3)}-\xi^{(2)})(0))}_s\\&+\delta\int_0^\tau \norm{e^{-(\tau -\tau')\b L}}_{s-2,s}\br{\tau'}^{-2}\Bigl(\norm{Q_0(\xi^{(1)}(\tau')-\xi^{(0)}(\tau'))}_{s-2}\\&+\abs{\si^{(1)}(\tau')-\si^{(0)}(\tau')}\Bigr)\,d\tau',
\end{aligned}
\end{equation}
where $\b L:= Q_0L(a^{(0)})Q_0: X^s\to X^{s-2}$, and $e^{-\tau \b L}: X^{s-2}\to X^s$ is the propagator of $\b L$.

We consider the two terms in the r.h.s. of \eqref{5.11}.
Since the stable projection of  the initial configuration \eqref{5.4}
vanishes,  the first term in the r.h.s. of \eqref{5.11}
drops out.

Since $Q_0$ projects onto the stable modes of $L$, 
the restriction $\b L$ is uniformly coercive, with $\inn{\b L\phi }{\phi}_{a^{(0)}}\ge c \norm{\phi}_{0, a^{(0)}}^2$ for some $c>0$ depending on the decay property \eqref{3.1} only and any $\phi\in X^s(a^{(0)}),\,s\ge2$. 
This and the interpolation from  \lemref{lemB.0} gives the propagator estimate $\norm{e^{-\tau \b L}}_{s-2\to s} \ls 1$.
Hence, the second term in \eqref{5.11} reduces to an intergral of the 
form \eqref{5.10.1}. By \eqref{5.10.2}, we conclude
\begin{equation}\label{5.12}
	\norm{Q_0(\xi^{(3)}-\xi^{(2)})(\tau)}_s \ls \br{\tau}^{-2} \norm{(Q_0(\xi^{(1)}-\xi^{(0)}),\si^{(1)}-\si^{(0)})},\quad \tau\ge0.
\end{equation}

Next, consider the term $P_0^{(1,0)}(\xi^{(3)}-\xi^{(2)})$. 
As in the proof of \thmref{thm4.1}, 
the evolution
of this part amounts to the following system of ODEs:
\begin{equation}
	\label{5.13}
	\dot {\tilde\beta}_i-a^{(0)}\tilde{\beta_i}=\tilde{f_i},\quad i=1,\ldots,n-k
\end{equation}
where
\begin{equation}
	\label{5.14}
\begin{aligned}
		\tilde{f_i}=&-
	\inn{V(\si^{(0)},\si^{(1)})\xi^{(3)}}{\Si^{(1,0)(i,0,0)}(a^{(0)})}_{a^{(0)}}
	\\&-\inn{\dot\si^{(3)}(\di_\si W(\si^{(1)})-\di_\si W(\si^{(0)}))}{\Si^{(1,0)(i,0,0)}(a^{(0)})}_{a^{(0)}}
	\\&-\inn{N(a^{(1)},\xi^{(1)})-N(a^{(0)},\xi^{(0)})}{\Si^{(1,0)(i,0,0)}(a^{(0)})}_{a^{(0)}},
\end{aligned}
\end{equation}
and the initial configuration is $\beta_i^{(1)}-\beta_i^{(0)}$, c.f. \eqref{5.4}.

Since $\xi^{(3)}-\xi^{(2)}$ remains bounded for all time, so do the coefficients
$\tilde{\beta_i}$ in \eqref{5.13}. Thus \lemref{lemB.1} implies 
\begin{equation}
	\label{5.15}
	\begin{aligned}
		&\norm{P_0^{(1,0)}(\xi^{(3)}-\xi^{(2)})}_s\\
		\ls& \sum_{i=1}^{n-k} \abs{\tilde\beta_i}
		\le\sum_{i=1}^{n-k} \int_\tau^\infty \abs{\tilde f_i(\tau')}\frac{\lambda(\tau)}{\lambda(\tau')}\,d\tau'\\
		\ls&\delta\sum_{i=1}^{n-k}\int_\tau^\infty \br{\tau'}^{-2}\Bigl(\norm{P^{(1,0)(i,0,0)}(\xi^{(1)}(\tau')-\xi^{(0)}(\tau'))}_s\\&+ \abs{\si^{(1)}(\tau')-\si^{(0)}(\tau')}\Bigr)e^{-(\tau'-\tau)/2}\,d\tau'.
	\end{aligned}
\end{equation}
In the first step we use an elliptic estimate of the form \eqref{A.5}.
In the last step we use \eqref{5.10} and the formula \eqref{5.14}. Since
\eqref{5.15} can be bounded by an integral as in the l.h.s. of \eqref{5.10.3},
we conclude
\begin{equation}
	\label{5.16}
		\norm{P_0^{(1,0)}(\xi^{(3)}-\xi^{(2)})}_s\\ \ls \delta\br{\tau}^{-2}
		\norm{(P_0^{(1,0)}(\xi^{(1)}-\xi^{(0)}),\si^{(1)}-\si^{(0)})},
\end{equation}
where the norm on the r.h.s. is \eqref{5.0}.

The argument for estimating $P^{(2,0)}_0(\xi^{(3)}-\xi^{(2)})$ is completely analogous.
 In this case the implicit constant may be larger 
because we do not
have a factor as $e^{-\int_\tau^{\tau'}a}$ inside the integral that defines $\g_{ij}$
(see \eqref{4.14} and \eqref{B.4}). We record this estimate:
\begin{equation}
	\label{5.17}
		\norm{P_0^{(2,0)}(\xi^{(3)}-\xi^{(2)})}_s \ls \delta\br{\tau}^{-2}
\norm{(P_0^{(2,0)}(\xi^{(1)}-\xi^{(0)}),\si^{(1)}-\si^{(0)})}.
\end{equation}

Collecting \eqref{5.6}, \eqref{5.12}, \eqref{5.16}-\eqref{5.17} gives \eqref{5.5}.

5. It remains now to prove a similar estimate for $\si^{(3)}-\si^{(2)}$ as \eqref{5.5}. To
this end, we use the fact that this difference satisfies the Cauchy problem
\begin{align}\label{5.18}
	\begin{split}
	\di_\tau (\si^{(3)}-\si^{(2)})=&\vec F(\si^{(1)})(\xi^{(3)}-\xi^{(2)})\\&+ (\vec F(\si^{(1)})-\vec F(\si^{(0)}))\xi^{(2)}+\vec M(\si^{(1)},\xi^{(1)})-\vec M(\si^{(0)},\xi^{(0)}),
\end{split}\\
	\label{5.19}
	\begin{split}
		\si^{(3)}-\si^{(2)}\vert_{\tau=0}=&(0,0,0)\in \Si.
	\end{split}
\end{align}

By the definition of $\vec F$ in \secref{sec:3.1}, using Cauchy-Schwartz
we find  that the first term in the r.h.s. of \eqref{5.18} satisfies 
	\begin{equation}\label{5.19.1}
\begin{aligned}
			&\abs{\vec F(\si^{(1)})(\xi^{(3)}-\xi^{(2)})}\\\ls&\del{\abs{\si^{(1)}}+\abs{\dot \si^{(1)}} }\norm{\xi^{(3)}-\xi^{(2)}}_0\\\ls& \delta^2 \br{\tau}^{-3}\norm{U^1-U^2},
\end{aligned}
	\end{equation}
For the last inequality we use \eqref{5.5}.

By the definition of $\vec F$ and $\vec N$ (see \eqref{3.5}),
the two terms in the second line of \eqref{5.18} satisfy similar bound as
\eqref{5.8}-\eqref{5.9}, respectively.
Indeed, we find 
\begin{align}
	\label{5.19.2}
	\abs{\vec F(\si^{(1)})-\vec F(\si^{(0)}))\xi^{(2)}}&\ls \delta\br{\tau}^{-2}\abs{\si^{(1)}-\si^{(0)}}\br{\tau}^{-2},\\	
	\label{5.19.3}
	\abs{\vec M(\si^{(1)},\xi^{(1)})-\vec M(\si^{(0)},\xi^{(0)})}&\ls \delta\br{\tau}^{-2} \del{\norm{\xi^{(1)}-\xi^{(0)}}_s+\abs{\si^{(1)}-\si^{(0)}}}.
\end{align}

 We conclude from \eqref{5.19.1}-\eqref{5.19.3} that
\begin{equation}
	\label{5.20}
	\abs{\di_\tau (\si^{(3)}-\si^{(2)})} \ls \delta\br{\tau}^{-2} \left(\norm{\xi^{(1)}(\tau)-\xi^{(0)}(\tau)}_s+\abs{\si^{(1)}(\tau)-\si^{(0)}(\tau)}\right).
\end{equation}
Integrating \eqref{5.20} in time
and using the zero initial condition \eqref{5.19}, we find
the uniform bound
\begin{equation}
	\label{5.20'}
	\abs{\si^{(3)}(\tau)-\si^{(2)}(\tau)}\ls \br{\tau}^{-1} \norm{U^1-U^0}.
\end{equation}
This, together with \eqref{5.5}, gives the desired contraction property after taking supremum over $\tau$, namely
\begin{equation}
	\label{5.21}
	\norm{U^3-U^2}\le \delta^{1/2}\norm{U^1-U^0},
\end{equation}
provided $\delta^{1/2}$ is sufficiently small,
so as to cancel the implicit constants in \eqref{5.5} and \eqref{5.20'}.

6. By \thmref{thm4.1}, \eqref{5.21} and \lemref{lemB.2},
for every $\eta_0$ satisfying the assumption of \lemref{lemA.1},
we get a unique fixed point 
$U\in \Ac_\delta$ by iterating $\Psi$ on a point $U^{(0)}\in \Ac_\delta$.
Indeed, define $U^{(m)}:=\Psi(U^{m-1}),\,m=1,2,\ldots$.
Each $U^{(m)}$ is the unique solution 
to the linear Cauchy problem  \eqref{3.4}-\eqref{3.7}
with a initial configuration depending on 
$\eta_0$ and some constants $\beta_i^{(m-1)},\,\g_{ij}^{(m-1)}$.
By \thmref{thm4.1}, the latter are defined uniquely as functions of 
$U^{(m-1)}$.
It follows that the limit 
$U$ is the unique solution to the nonlinear problem 
\eqref{2.1'}, \eqref{2.17}-\eqref{2.15} with initial condition
\eqref{3.6}-\eqref{3.7}, where the constants $\beta_i,\,\g_{ij}$ 
are defined to be the limits of $\beta_i^{(m)},\,\g_{ij}^{(m)}$ as $m\to\infty$, respectively. Indeed, if $U^{(m)}$ is a Cauchy sequence in $\Ac_\delta$, then
these limits exist by \eqref{4.2}-\eqref{4.2'}.
Moreover, the constant  $\beta_i,\,\g_{ij}$ now depend on $\eta_0$ only.
By the uniform estimates \eqref{4.1}-\eqref{4.1'}, we conclude that
$\beta_i,\,\g_{ij}=O(\delta^2)$. This completes the proof.

\end{proof}

	\section{A Sable Manifold Theorem for the Rescaled MCF}\label{sec:6}
In this section we  consider \thmref{thm1} as 
a stable manifold theorem for the rescaled MCF, \eqref{2.1'}.

Define a set
\begin{equation}\label{6.0}
	M:=\Set{\eta+\Phi(\eta):\eta\in\Bc_\delta\cap \Sc}\quad (0<\delta\ll1).
\end{equation}
Recall that  the map $\Phi$, given in \defnref{Phi},
maps each  $\eta\in\Bc_\delta\cap \Sc$ in the parameter space
to the fixed point of the map
$\Psi(\cdot,\eta)$ in the space $\Ac_\delta$.

In what follows, we show 
the effect of $\Phi$ amounts to a second order perturbation of
$\Bc_\delta\cap \Sc$, and consequently the manifold
$M$ is non-degenerate.
We conjecture that in fact $\Phi$ is a $C^1$ immersion.  

We also show a Lipschitz estimate for $\Phi$,
This shows  $M$ is a Lipschitz graph
over $\Bc_\delta\cap\Sc$. 
By Rademacher's theorem, this justifies the
treatment of  $M$ as a manifold.

\begin{theorem}
	\label{thm6.1}
	The map $\Phi=\Phi$ in \defnref{Phi} satisfies
	\begin{align}
		\label{6.1}
		\norm{\Phi(\eta_0)}_s&\ls \norm{\eta_0}_s^2\quad (\eta_0\in \Bc_\delta\cap\Sc),\\
		\label{6.2}
		\norm{\Phi(\eta_0)-\Phi(\eta_1)}&\ls 
		\delta
		\norm{\eta_0-\eta_1}\quad (\eta_0,\,\eta_1\in \Bc_\delta\cap\Sc).
	\end{align}
\end{theorem}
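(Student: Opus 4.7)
The approach is to exploit the fixed point characterization of $\Phi$. By \defnref{Phi},
$\Phi(\eta_0) = \beta_i(U^\star)\,\Si^{(1,0)(i,0,0)}(a_0) + \g_{ij}(U^\star)\,\Si^{(2,0)(i,j,0)}(a_0)$,
where $U^\star = U^\star(\eta_0)\in\Ac_\delta$ is the unique fixed point of $\Psi(\cdot,\eta_0)$ produced by \thmref{thm5.1}. \thmref{thm4.1} already supplies the estimates $\abs{\beta_i(U)},\abs{\g_{ij}(U)}\ls\delta^2$ and $\abs{\beta_i(U)-\beta_i(V)}+\abs{\g_{ij}(U)-\g_{ij}(V)}\ls\delta\norm{U-V}$, but stated in terms of the fixed path argument of $\Psi$. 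The plan is to convert these into bounds in the parameter $\eta_0$ by establishing, in the $\Ac_\delta$-norm \eqref{5.0}, (a) $\norm{U^\star(\eta_0)}\ls\norm{\eta_0}_s$, and (b) $\norm{U^\star(\eta_0)-U^\star(\eta_1)}\ls\norm{\eta_0-\eta_1}_s$.

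For (a), I bootstrap off the contraction estimate of \thmref{thm5.1}. Writing $U^\star = \Psi(U^\star,\eta_0)$ and comparing with $\Psi(0,\eta_0)$,
\begin{equation*}
\norm{U^\star}\le \norm{\Psi(U^\star,\eta_0)-\Psi(0,\eta_0)}+\norm{\Psi(0,\eta_0)}\le \delta^{1/2}\norm{U^\star}+\norm{\Psi(0,\eta_0)},
\end{equation*}
so $\norm{U^\star}\ls \norm{\Psi(0,\eta_0)}$. At the zero path $\xi^{(0)}\equiv 0$, $\si^{(0)}\equiv\si(0)$, the nonlinearity $N(a^{(0)},0)$ vanishes, hence by \eqref{4.9}, \eqref{4.13} both $\beta_i$ and $\g_{ij}$ are zero at this iterate; the initial condition of $\Psi(0,\eta_0)$ reduces to $\eta_0\in\Sc$, and the $\xi$-equation collapses to the homogeneous linear flow $\dot\xi=-L(a_0)\xi$. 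Since $L(a_0)$ has a positive spectral gap on $\Sc$, one obtains $\norm{\xi^{(1)}(\tau)}_s\ls e^{-c\tau}\norm{\eta_0}_s$. Moreover $\xi^{(1)}(\tau)\in\Sc$ throughout, and $\vec F(\si(0))\xi^{(1)}$ is built solely out of pairings against zero-unstable modes, so $\dot\si^{(1)}\equiv 0$ and $\si^{(1)}\equiv\si(0)$. Thus $\norm{\Psi(0,\eta_0)}\ls\norm{\eta_0}_s$, and the bootstrap closes to give $\norm{U^\star}\ls\norm{\eta_0}_s$, i.e.\ $\norm{\xi^\star(\tau)}_s\ls\norm{\eta_0}_s\br{\tau}^{-2}$. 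Substituting this improved decay into \eqref{4.9}, \eqref{4.13} through the nonlinear estimate \eqref{A.3}, as in \eqref{4.11'}, \eqref{4.15'}, upgrades the quadratic bounds to $\abs{\beta_i},\abs{\g_{ij}}\ls\norm{\eta_0}_s^2$, which is \eqref{6.1}.

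For (b), I decompose
\begin{equation*}
U_0-U_1=[\Psi(U_0,\eta_0)-\Psi(U_1,\eta_0)]+[\Psi(U_1,\eta_0)-\Psi(U_1,\eta_1)].
\end{equation*}
The first bracket is bounded by $\delta^{1/2}\norm{U_0-U_1}$ via \thmref{thm5.1}. For the second, since $\beta_i(U_1),\g_{ij}(U_1)$ depend only on the fixed path, $\Psi(U_1,\eta_0)$ and $\Psi(U_1,\eta_1)$ solve the \emph{same} linear system with initial data differing by $\eta_0-\eta_1\in\Sc$; repeating the spectral argument of (a) now along the path $U_1$ bounds this bracket by $C\norm{\eta_0-\eta_1}_s$. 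Absorbing the $\delta^{1/2}$ factor yields $\norm{U_0-U_1}\ls\norm{\eta_0-\eta_1}_s$, and combining with the Lipschitz estimates \eqref{4.2}--\eqref{4.2'} delivers \eqref{6.2}. I expect the hard part to be step (a): the $\si$-component of \eqref{5.0} weights $\abs{\si(\tau)-\si(0)}$ by $\br\tau$, so the identity $\dot\si^{(1)}\equiv 0$ at the zero path is essential to prevent linear-in-$\tau$ blow-up and crucially relies on $\eta_0\in\Sc$. A parallel delicacy arises in (b) along a nonconstant $U_1$: small temporal variation of the spectral projections $P_1^{(m,n)}$ threatens to reintroduce growth in the time-weighted norm, and controlling this requires careful use of the admissibility bounds in \defnref{defn2.1}.
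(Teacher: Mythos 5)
Your part (b) is essentially the paper's own argument: for a frozen path the coefficients $\beta_i,\g_{ij}$ do not depend on $\eta$, so $\Psi(U_1,\eta_0)-\Psi(U_1,\eta_1)$ solves the homogeneous linear problem \eqref{6.7}--\eqref{6.8}, and the paper bounds it by $\norm{\eta_0-\eta_1}_s$ exactly as you propose (its \eqref{6.6}), then produces the factor $\delta$ in \eqref{6.2} from the Lipschitz dependence of the coefficients on the frozen path. Your shortcut of invoking \eqref{4.2}--\eqref{4.2'} directly at the two fixed points, instead of the paper's iteration argument leading to \eqref{6.6'}--\eqref{6.6.2}, is a harmless streamlining of the same route.

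The gap is in part (a). Your bootstrap hinges on $\norm{\Psi((\si(0),0),\eta_0)}\ls\norm{\eta_0}_s$ in the norm \eqref{5.0}, i.e.\ on exponential decay of the frozen flow on $\Sc$ measured in $X^s=X^s(1/2)$ with a constant \emph{linear} in $\norm{\eta_0}_s$. But the spectral gap of $L(a_0)$ lives in $X^s(a_0)$ with $a_0>1/2$, which is a strictly weaker norm than $X^s$; decay of $\norm{\cdot}_{s,a_0}$ says nothing directly about $\norm{\cdot}_s$, and the paper's only bridge between the two is the pivot-plus-interpolation \lemref{lemB.0}, which costs a power $2/3$ (this is exactly how \eqref{4.8'} is converted into \eqref{4.8}, via \eqref{B.8}). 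Within the paper's toolkit you therefore get at best $\norm{\xi^{(1)}(\tau)}_s\ls e^{-c'\tau}\norm{\eta_0}_s^{2/3}$ times a fixed pivot constant, so the bootstrap yields only $\norm{U^\star}\ls\norm{\eta_0}_s^{2/3}$ and hence $\norm{\Phi(\eta_0)}_s\ls\norm{\eta_0}_s^{4/3}$, short of \eqref{6.1}. (Your structural observations are fine: $N(a_0,0)=0$, hence $\beta_i=\g_{ij}=0$ at the trivial path, and $\dot\si^{(1)}\equiv0$ via the orthogonality \eqref{A.0}; the problem is purely the norm in which you claim the decay.) The paper avoids needing any data-proportional decay: it proves \eqref{6.1} by iterating $\Psi(\cdot,\eta_0)$, using the fixed decay bound \eqref{3.2} of $\Ac_\delta$ (whose constant is $\delta$, not $\norm{\eta_0}_s$) together with the normalization $\norm{\eta_0}_s\sim\delta$ made at the start of its proof, so that $\delta\br{\tau}^{-2}$ is comparable to $\norm{\eta_0}_s\br{\tau}^{-2}$, and then lets the iteration error $\eps\to0$. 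To rescue your route you would need a genuine $X^s\to X^s$ propagator estimate with exponential decay on $\Sc$ (not supplied by the paper), or else restrict to $\norm{\eta_0}_s\sim\delta$ as the paper does, at which point the bootstrap is superfluous.
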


\begin{proof}
	1. We first prove \eqref{6.1}.
	Take $\eta_0\in \Bc_\delta\cap\Sc$ with $\norm{\eta_0}\sim \delta$.
	Fix some $0<\eps\ll1$ to be determined. As in the last part of the 
	proof of \thmref{thm5.1},
	for sufficiently large $M$ and all $m\ge M$, 
	 one can iterate the map $\Psi=\Psi(\cdot,\eta_0)$ for 
	$m$ times to get some $U^{(m)}\in \Ac_\delta$ (this space does not depend on the choice of $\eta_0$) and $\beta_i^{(m)},\,\g_{ij}^{(m)}$
	s.th. the following holds:
	\begin{enumerate}
		\item $\abs{\beta_i^{(m)}-\beta_i}+\abs{\g_{ij}^{(m)}-\g_{ij}}<\eps;$
		\item $\norm{\xi^{(m+1)}(\tau)-\xi^{(m)}(\tau)}<\eps\br{\tau}^{-2}$ for all $\tau\ge0$; 
		\item $\beta_i^{(m)}=-\int_0^\infty f_i(U^{(m)})\lambda^{-1}$ and $\g_{ij}^{(m)}=-\int_0^\infty h_{ij}(U^{(m)})$, where the functionals
		$f_i,\,h_{ij}:\Ac_\delta\to \Rb$ are as in \eqref{4.6}-\eqref{4.7}.
	\end{enumerate} 
	
	Consider the path $U^{(M+1)}$ with $M\gg1$. This solves the Cauchy problem
	\begin{align}
		\label{6.3}
		\dot \xi^{(M+1)}+L(a^{(M)})\xi^{(M+1)}&= -\W (\si^{(M)})\xi^{(M+1)}- \tilde N(a^{(M)},\xi^{(M)}),\\
		\label{6.4}
		\xi^{(M+1)}(0)&= \eta_0+\beta_i^{(M)} \Si^{(1,0)(i,0,0)}(a_0)+\g_{ij}^{(M)}\Si^{(2,0),(i,j,0) }(a_0).
	\end{align}
	For $\norm{\eta_0}_s\sim \delta$ and $\eps,\,\delta\ll1$, by \eqref{4.1}-\eqref{4.1'},
	we have $\norm{\xi^{(M+1)}(0)}_s\ls \norm{\eta_0}_s+\eps$.
	On the other hand, by the decay condition \eqref{3.2}, 
	we find $\norm{\xi^{(M+1)}(\tau)}\ls \norm{\xi^{(M+1)}(0)}_s\br{\tau}^{-2}$.
	Since $\abs{f_i(U^{(M)})},\,\abs{h_{ij}(U^{(M)})}\ls \norm{\xi^{(M)}}_s^2$ by the nonlinear estimate \eqref{A.3} (recall the definition from \eqref{4.6}-\eqref{4.7}), with Items 1-3 above, we find
	\begin{equation}\label{6.5}
		\begin{aligned}
		\abs{\beta_i}&< \eps+\abs{\beta_i^{(M)}}  \\
		&\ls \eps +\int_0^\infty \norm{\xi^{(M)}}_s^2\\
		&< \eps +\int_0^\infty (\norm{\xi^{(M+1)}(\tau')}_s+\eps\br{\tau'}^{-2})^2\,d\tau'\\
		&\ls \eps +\norm{\eta_0}_s^2\int_0^\infty \Bigl(1+\eps\delta^{-1}\Bigr)^2\br{\tau'}^{-4}\,d\tau',
	\end{aligned}
	\end{equation}
	and a similar estimate for $\g_{ij}$. For fixed $\delta$, taking $\eps\to0$ in estimate \eqref{6.5} and the one for $\g_{ij}$ gives \eqref{6.1}.
	
	2. Next, we prove \eqref{6.2}. As discussed in \remref{remP}, 
	the map $\Psi$ depends on the choice of $\eta_0$. Now, consider the map
	$$\Psi:\Ac_\delta\times (\Bc_\delta\cap \Sc)\to \Ac_\delta.$$
	With the parameter space $\Bc_\delta\cap \Sc$ in place
	of the set $B\subset Y$ in \lemref{lemB.2},
	we see that \eqref{6.2} follows from \eqref{5.21} and \eqref{B.6} if we can show
	the following uniform Lipschitz estimates:
	\begin{align}
		\label{6.6'}
		\abs{\Phi(\eta_0)-\Phi(\eta_1)}&\ls \delta \lim_{M\to\infty}\norm{\Psi^{(M)}(U,\eta_0)-\Psi^{(M)}(U,\eta_1)}\\
		\label{6.6}\norm{\Psi(U,\eta_0)-\Psi(U,\eta_1)}&\ls \norm{\eta_0-\eta_1}_s,
	\end{align}
	for  all 
	$\eta_0,\,\eta_1\in\Bc_\delta\cap \Sc,\,U\in\Ac_\delta.$
	Here the notation $\Psi^{(M)}$ means iterating the map $M$ times.
	Notice that the limit in \eqref{6.6'} exists, since by 	\thmref{thm5.1}, the sequences  $\Psi^{(M)}(U,\eta_m),\,m=0,1$
	both converge as $M\to\infty$ in the space $\Ac_\delta$.

	To get \eqref{6.6'}, we use a similar argument as in Step 1.
		Here we only show $\abs{\beta_i(\eta_1)-\beta_i(\eta_0)}\ls \delta
		\norm{\eta_1-\eta_0}_s,$ as the argument to bound the $\g_{ij}$-difference
		is identical. 
		
	Indeed, define $\beta_i(\eta_m),\,\beta_i^{(M)}(\eta_m)$ 
	by adapting the construction from Step 1 for $m=0,1$.
	Then we get the estimate 
	\begin{equation}\label{6.6.1}
		\begin{aligned}
			\abs{\beta_i(\eta_1)-\beta_i(\eta_0)}&<2\eps+\abs{\beta_i^{(M)}(\eta_1)-\beta_i^{(M)}(\eta_0)}\\
			&\ls \eps+\int_0^\infty \abs{f_i(U^{(M)}(\eta_1))-f_i(U^{(M)}(\eta_0))},
		\end{aligned}
	\end{equation}
	for any fix $\eps>0$ and all sufficiently large $M$.
	Here the paths $U^{(M)}(\eta_m),\,m=0,1$ are obtained by iterating
	the map $\Psi(\cdot,\eta_m)$ on some fixed $U\in\Ac_\delta$.
	
	The claim now is that there holds the difference estimate
	\begin{equation}\label{6.6.2}
		\abs{f_i(U^{(M)}(\eta_1))-f_i(U^{(M)}(\eta_0))}\ls \delta\br{\tau}^{-2}\norm{U^{(M)}(\eta_1)-U^{(M)}(\eta_0)},
	\end{equation}
	for all sufficiently large $M$.
	Indeed, if \eqref{6.6.2} holds, then plugging this into \eqref{6.6.1}
	and taking $\eps\to 0,\,M\to\infty$ gives \eqref{6.6'}.

	In view of the definition of $f_i$ from \eqref{4.6} and the Lipschitz
	estimate \eqref{C.2}, to get \eqref{6.6.2},
	it suffices to show that the paths 
	$U^{(M)}(\eta_1),\,U^{(M)}(\eta_0)$ remains uniformly 
	close in $\Ac_\delta$ for all large $M$.
	 This follows from the definition $U^{(M)}(\eta_m)=\Psi^{(M)}(U,\eta_m)$ and 
	 the uniform bound \eqref{6.6}, provided $\eta_0$ 
	 and $\eta_1$ are close.

	3. It remains to prove \eqref{6.6}. To this end, write $(\si^m,\xi^m):=\Psi(U,\eta_m),\,m=0,1$ for some fixed path $U\in\Ac_\delta$. 
	Notice the difference from \thmref{thm5.1}: Here we fix some $U$ but take
	different $\eta_m$, whereas in \thmref{thm5.1} it is the other way around.
	
	By the definition of $\Psi$
	from \eqref{3.9}, we find that the difference $\xi^1-\xi^0$ satisfies
	the Cauchy problem
	\begin{align}
		\label{6.7}
	&\di_\tau(\xi^1-\xi^0)+L(a)(\xi^1-\xi^0)=-\W(\si)(\xi^1-\xi^0),\\ &\di_\tau(\si^1-\si^0)=\vec F(\si)(\xi^1-\xi^0),\label{6.7'}\\
	\label{6.8}
	&(\si^1-\si^0, \xi^1-\xi^0)\vert_{\tau=0}=(0,\eta_1-\eta_0).
	\end{align}
	Here $\si$ and $a$ are components of the fixed path $U$,
	and the r.h.s. of \eqref{6.7} is bounded by \eqref{A.2}.
	
	Now we study \eqref{6.7}-\eqref{6.8}
	as in the proof of \thmref{thm5.1}. Indeed, if we decompose $\xi^1(\tau)-\xi^0(\tau)$ in terms of the eigenfunctions of $L(a(\tau))$,
	then the projections of this difference along the stable modes decay
	exponentially by the evolution \eqref{6.7}, and those along the zero-unstable
	modes decay at the rate of $\br{\tau}^{-2}$ by the construction
	from \thmref{thm4.1}. The conclusion is the estimate 
	$\norm{\xi^1(\tau)-\xi^0(\tau)}_s\ls \br{\tau}^{-2}\norm{\xi^1(0)-\xi^0(0)}=\br{\tau}^{-2}\norm{\eta_1-\eta_0}$. 
	Plugging this into \eqref{6.7'} and integrating, we get the estimate $\abs{\si^{1}-\si^{0}}\ls \br{\tau}^{-1}\norm{\eta_1-\eta_0}$
	(here we use the zero initial condition from \eqref{6.8}).
	Thus \eqref{6.6} follows.

\end{proof}

\section*{Acknowledgment}
The Author is supported by Danish National Research Foundation grant CPH-GEOTOP-DNRF151. The Author thanks NM M\o ller and JMS Ma for  helpful discussions. 

\section*{Declarations}
\begin{itemize}
	\item Conflict of interest: The Author has no conflicts of interest to declare that are relevant to the content of this article.
\end{itemize}

\appendix
\section{Elementary Lemmas}
\label{sec:B}
In this section we prove some abstract and  elementary lemmas.
Despite their simple appearcance, they play crucial roles in 
Sects. \ref{sec:4}-\ref{sec:5}.

Recall that the weighted Sobolev space $X^s(a)$ is defined in \secref{sec:2}.
As discussed in that section, for two numbers
$b\le 1/2\le a$ we have the trivial embedding
$$X^s(b)\le X^s\equiv X^s(1/2)\subset X^s(a)\quad (s\ge0).$$
In Sect. 4, we are concerned with the $X^s$-estimate
for a family of functions $\phi(t)\in X^s(b)$,
knowing some estimate of the varying norm $\norm{\phi(t)}_{s,a(t)}$ for
a function $a(t)\ge1/2$. 
\lemref{lemB.0} below allows us to deal with this issue.

\begin{definition}[pivot condition]\label{pivot}
	Fix some $0<\delta<1/8$.
	A family of functions $\phi(\cdot, t):\Rb^{n-k}\times \Sb^k\to \Rb,\,t\ge0$
	satisfied \textit{the pivot condition} if there is constant $c>0$ independent of $t$ s.th. 
	\begin{equation}
		\label{B.7}
		\int \sum_{\abs{\al}\le s}\abs{\di^\al \phi(t)}^2 \rho_b\le c, \quad b:=\frac{1}{2}-4\delta>0.
	\end{equation}
	Equivalently, this means $\phi(t)\in X^s(b)$ and is uniformly bounded for all $t$ in the $X^s(b)$-norm. 
\end{definition}

\begin{lemma}\label{lemB.0}
	Let $a(t)$ be a function satisfying $0\le a(t)-1/2\le2 \delta$. 
	Suppose $\phi(t)$ satisfies the pivot condition \eqref{B.7}, and 
	 $\norm{\phi(t)}_{s,a(t)}\le g(t)$ for some positive function $g$.
	Then there holds 
	\begin{equation}
		\label{B.8}
		\norm{\phi(t)}_s\le c^{1/12} g(t)^{2/3}.
	\end{equation}
	
\end{lemma}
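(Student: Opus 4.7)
The plan is to use H\"older's inequality to interpolate the pivot Gaussian weight $\rho_{1/2}=e^{-|y|^2/4}$ between $\rho_b$ (heavier, $b<1/2$, for which the pivot condition \eqref{B.7} supplies a bound) and a weight $\rho_{a'}$ with $a'\ge a(t)$ (for which the hypothesis on $g$ together with the embedding \eqref{2.0} supplies a bound). The key point is that the numerical relations $b=1/2-4\delta$ and $0\le a(t)-1/2\le 2\delta$ are calibrated precisely so that the interpolation exponent can be taken as a fixed constant independent of $t$.

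Concretely, I would first choose $a':=1/2+2\delta$, which by the hypothesis on $a(t)$ satisfies $a'\ge a(t)$. Then I would solve for $\theta\in(0,1)$ in the equation $1/2=\theta b+(1-\theta)a'$; the choices $b=1/2-4\delta$ and $a'=1/2+2\delta$ yield $\theta=1/3$. For each multi-index $|\alpha|\le s$, I would apply H\"older's inequality with exponents $(3,3/2)$ to the pointwise factorization $\rho_{1/2}=\rho_b^{1/3}\rho_{a'}^{2/3}$, obtaining
\begin{equation*}
\int|\di^\al\phi(t)|^2\rho_{1/2}\,d\mu\le\Bigl(\int|\di^\al\phi(t)|^2\rho_b\,d\mu\Bigr)^{1/3}\Bigl(\int|\di^\al\phi(t)|^2\rho_{a'}\,d\mu\Bigr)^{2/3}.
\end{equation*}
Summing over $|\alpha|\le s$ and applying the discrete H\"older inequality with the same exponents gives $\norm{\phi(t)}_s^2\le\norm{\phi(t)}_{s,b}^{2/3}\norm{\phi(t)}_{s,a'}^{4/3}$.

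To close the argument I would substitute in the pivot bound $\norm{\phi(t)}_{s,b}\le c^{1/2}$ coming from \eqref{B.7} together with the embedding bound $\norm{\phi(t)}_{s,a'}\le\norm{\phi(t)}_{s,a(t)}\le g(t)$ supplied by \eqref{2.0} and the hypothesis, then take a square root to arrive at \eqref{B.8}. The main obstacle, which is essentially a bookkeeping task once the correct calibration is in place, is to ensure that the H\"older exponent $\theta=(a(t)-1/2)/(a(t)-b)$ is uniformly bounded away from $1$ as $a(t)$ ranges over its admissible interval; this is exactly what forces the factor of $4\delta$ in the definition of $b$ against the factor of $2\delta$ in the range for $a(t)$, and is the reason the admissibility conditions in \defnref{defn2.1} and the pivot condition in \defnref{pivot} are phrased with these precise constants.
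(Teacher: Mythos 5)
Your argument is correct and is essentially the paper's own proof: both interpolate the Gaussian weight $\rho_{1/2}$ between $\rho_b$ and a weight with parameter at least $a(t)$ via H\"older's inequality with exponents $3$ and $3/2$, the only cosmetic difference being that you freeze the endpoints at $b$ and $a'=1/2+2\delta$ and then use $\rho_{a'}\le\rho_{a(t)}$, while the paper factors $\rho_{1/2}=\rho_{2a/3}\,\rho_{1/2-2a/3}$ with the time-dependent $a(t)$ and then observes $\tfrac{3}{2}-2a(t)\ge b$ so the pivot condition applies. Note that, exactly as in the paper's own computation, your derivation yields the constant $c^{1/6}$ rather than the $c^{1/12}$ written in \eqref{B.8}; this discrepancy lies in the lemma's statement, not in your argument.
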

\begin{proof}
	Let $u:=\sum_{\abs{\al}\le s}\abs{\di^\al \phi}^2$.
	Then $u\rho_{1/2}=(u^{2/3}\rho_{2a/3})(u^{1/3}\rho_{\frac{1}{2}-\frac{2a}{3}}).$
	By H\"older's inequality with $p=3/2$ and $q=3$, we find
	\begin{equation}\label{B.8'}
		\norm{\phi}_s^2=\int u\rho_{1/2}\le \del{\int u \rho_{a}}^{2/3}\del{\int u\rho_{\frac{3}{2}-2a}}^{1/3}=\norm{\phi}_{s,a}^{4/3}\del{\int u\rho_{\frac{3}{2}-2a}}^{1/3}.
	\end{equation}
	The assumption $0\le a-1/2\le 2\delta<1/4$ implies $\tfrac{3}{2}-2a\ge b>0$, and therefore
	the second factor in the r.h.s. of \eqref{B.8'} is bounded
	by the l.h.s. of \eqref{B.7}. Hence \eqref{B.8} follows from the assumption
	$\norm{\phi}_{s,a}\le g$ and taking square root on both sides of \eqref{B.8'}.
\end{proof}
\begin{remark}
	More generally, for all $1< p<\infty$, one can vary the pivot
	condition to obtain
	similar estimate as \eqref{B.8} with higher power in the r.h.s..
	For our need, any power $1<p<2$ will do.
\end{remark}

By \lemref{lemB.0}, every dissipative estimate in terms of the
$X^s(a)$-norm with possibly changing $a\ge 1/2$ 
gives a (weaker) dissipative estimate in the fixed $X^s$-norm.
This is important for many estimates in Sect.\ref{sec:4}.

The following two lemmas are the mechanism behind \thmref{thm4.1} and \thmref{thm5.1}, respectively. These results are obtained in \cite{MR2480603}.

The first lemma concerns with Cauchy problem for an inhomogeneous ODE.
\begin{lemma}[c.f. \cite{MR2480603}*{Lem. 23}]
	\label{lemB.1}
	Fix two functions $a(t)\ge c>0$ and $f\in L^1([0,\infty),\Rb)$.
	Consider the Cauchy problem for  $x:[0,\infty)\to \Rb$:
	\begin{align}
		&\dot x-a(t)x=f,\label{B.1}\\
		&x(0)=x_0\in \Rb\label{B.2}.
	\end{align}
	There exists a unique bounded solution if and only if 
	\begin{equation}
		\label{B.3}
		x_0=-\int_0^\infty f_i(t')e^{-\int_0^{t'} a}\,dt'\text{ in \eqref{B.2}}.
	\end{equation}
	
	Moreover, if \eqref{B.3} holds, then the solution to \eqref{B.1}-\eqref{B.2} is given by
	\begin{equation}
		\label{B.4}
		x(t)=-\int_t^\infty f(t')e^{-\int_t^{t'} a},\,d\tau'.
	\end{equation}
\end{lemma}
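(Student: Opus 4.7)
The plan is to use the standard integrating factor method and exploit the growth rate $a(t) \ge c > 0$, which forces the homogeneous solutions to blow up exponentially and thereby pins down a unique bounded particular solution.

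First I would set $A(t) := \int_0^t a(s)\,ds$, so that $A(t) \ge ct \to \infty$. Multiplying \eqref{B.1} by the integrating factor $e^{-A(t)}$ gives $\bigl(e^{-A(t)} x(t)\bigr)^\cdot = e^{-A(t)} f(t)$, which integrates to
$$
x(t) = e^{A(t)}\left(x_0 + \int_0^t e^{-A(s)} f(s)\,ds\right).
$$
Because $f \in L^1$ and $e^{-A(s)} \le 1$, the improper integral $I_\infty := \int_0^\infty e^{-A(s)} f(s)\,ds$ converges absolutely, and $\int_0^t e^{-A(s)} f(s)\,ds \to I_\infty$ as $t \to \infty$.

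Next, I would extract the compatibility condition. Writing $x(t) = e^{A(t)}\bigl(x_0 + I_\infty + o(1)\bigr)$ as $t \to \infty$, and using $e^{A(t)} \to \infty$, boundedness of $x$ forces $x_0 + I_\infty = 0$, giving \eqref{B.3}. Conversely, if \eqref{B.3} holds, the formula becomes
$$
x(t) = -e^{A(t)} \int_t^\infty e^{-A(s)} f(s)\,ds = -\int_t^\infty f(s)\, e^{-\int_t^s a}\,ds,
$$
which is \eqref{B.4}; boundedness (in fact, decay) follows from
$$
|x(t)| \le \int_t^\infty |f(s)|\, e^{-c(s-t)}\,ds \le \int_t^\infty |f(s)|\,ds \xrightarrow{t\to\infty} 0.
$$

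For uniqueness, I would observe that the difference $y$ of any two bounded solutions solves the homogeneous equation $\dot y = a(t) y$, whose general solution is $y(t) = y(0)\, e^{A(t)}$; since $e^{A(t)} \to \infty$, boundedness forces $y(0) = 0$, hence $y \equiv 0$. The only mild subtlety — and the closest thing to an obstacle — is justifying the ``only if'' cleanly, i.e., that the parenthesis must actually tend to $0$ rather than merely stay bounded; this is handled by the observation that $e^{A(t)} \to \infty$ so any nonzero limit of the parenthesis would force unbounded $x(t)$. Everything else is routine integrating-factor manipulation.
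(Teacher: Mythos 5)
Your proof is correct and follows essentially the same route as the paper: variation of parameters (integrating factor), letting $t\to\infty$ to force $x_0+\int_0^\infty f e^{-\int_0^{t'}a}\,dt'=0$, and then simplifying the general formula to \eqref{B.4} with the bound $\abs{x(t)}\le\norm{f}_{L^1}$. Your explicit treatment of uniqueness and of the ``only if'' limit is slightly more careful than the paper's one-line version, but it is the same argument.
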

\begin{proof}
	The variation of parameter formula gives the general solution to \eqref{B.1}-\eqref{B.2} as
	\begin{equation}
		\label{B.5}
		x(t)=e^{\int_0^t a} (x_0+\int_0^t f e^{-\int_0^{t'} a}\,dt').
	\end{equation}
	Multiplying both sides of \eqref{B.5} by 
	$e^{-\int_0^t a}$, and then taking $t\to \infty $, 
	we find that if $x(t)\le C$, then  $x(t)e^{-\int_0^t a}\to 0$ as $t\to \infty$ because of the uniform bound $a(t)\ge c>0$,
	and therefore \eqref{B.3} holds.
	
	Conversely, if \eqref{B.3} holds, then the general formula \eqref{B.5} simplifies to \eqref{B.4}.
	For $a(t)>0$ and $f\in L^1$  we can conclude from here that $x(t)\le \norm{f}_{L^1}$.

\end{proof}

For the next lemma, we adopt similar  notations as in Sects. \ref{sec:4}-\ref{sec:6}.
\begin{lemma}[c.f. \cite{MR2480603}*{Lem. 28}]
	\label{lemB.2}
	Let $A\subset X$ be a closed subset of a Banach space $(X,\norm{\cdot})$. Let $B\subset Y$ 
	be a subset of a normed vector space $(Y,\norm{\cdot}_Y)$.
	Let
	 $\Psi:X\times Y\to X$
	be a map s.th. 
	\begin{enumerate}
		\item $\Psi(A\times B)\subset A$;
		\item There exists $0<\rho<1$ s.th.
		$$\sup_{\eta\in B}\norm{\Psi(u^{(1)},\eta)-\Psi(u^{(0)},\eta)}\le \rho\norm{u^{(1)}-u^{(0)}}$$ for two vectors $u^{(0)},u^{(1)}\in A$;
		\item There exists $\al,\delta>0$ s.th.
		$$\sup_{u\in A}\norm{\Psi(u,\eta_1)-\Psi(u,\eta_0)}\le \delta \norm{\eta_1-\eta_0}_Y^\al$$
		for two vectors $\eta_1,\eta_0\in B$.
	\end{enumerate}
	
	Then for every $\eta\in B$, there exists a unique fixed point $u(\eta)$ s.th. $\Psi(u(\eta),\eta)=u(\eta)$.
	Moreover, there holds
	\begin{equation}\label{B.6}
		\norm{u(\eta_1)-u(\eta_0)}\le \frac{\delta}{1-\rho}\norm{\eta_1-\eta_0}_Y^\al.
	\end{equation}
\end{lemma}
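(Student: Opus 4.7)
The plan is to treat this as a parametric Banach fixed point theorem. The two conclusions follow from two independent ideas: existence/uniqueness of $u(\eta)$ comes from the classical contraction mapping principle applied slicewise, and the Lipschitz estimate \eqref{B.6} comes from a short algebraic manipulation that isolates the $\eta$-dependence and absorbs an $\norm{u(\eta_1)-u(\eta_0)}$-term on the right into the left.

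First, I would fix an arbitrary $\eta \in B$ and consider the restricted map $\Psi_\eta := \Psi(\cdot,\eta)\colon A \to A$. Condition 1 ensures this restriction is well-defined as a self-map of $A$, and condition 2 (uniformly in $\eta$) gives that $\Psi_\eta$ is a contraction on $A$ with Lipschitz constant $\rho<1$. Since $A$ is a closed subset of the Banach space $X$, it is itself a complete metric space under the induced metric, so the Banach fixed point theorem applies and yields a unique $u(\eta) \in A$ with $\Psi(u(\eta),\eta) = u(\eta)$. This establishes existence and uniqueness for every $\eta \in B$.

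Second, to derive \eqref{B.6}, I would pick $\eta_0, \eta_1 \in B$, write $u_m := u(\eta_m)$, and insert an intermediate point via the triangle inequality:
\begin{align*}
\norm{u_1 - u_0}
&= \norm{\Psi(u_1,\eta_1) - \Psi(u_0,\eta_0)} \\
&\le \norm{\Psi(u_1,\eta_1) - \Psi(u_0,\eta_1)} + \norm{\Psi(u_0,\eta_1) - \Psi(u_0,\eta_0)}.
\end{align*}
Condition 2 applied with fixed second argument $\eta_1$ bounds the first term by $\rho\norm{u_1 - u_0}$, and condition 3 applied with fixed first argument $u_0$ bounds the second term by $\delta\norm{\eta_1 - \eta_0}_Y^\al$. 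Rearranging and dividing by $1-\rho > 0$ yields \eqref{B.6}.

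There is no real obstacle here: the result is essentially a textbook parametric contraction argument, and the only subtle point is making sure the suprema in conditions 2 and 3 are uniform in the ``fixed'' variable so that the bounds can be applied to $\eta_1$ and $u_0$ respectively in the two-step triangle inequality. Both uniformities are built into the hypotheses, so the proof reduces to the two paragraphs above.
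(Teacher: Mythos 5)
Your proof is correct, but it takes a slightly different route from the paper for the estimate \eqref{B.6}. You prove existence and uniqueness exactly as the paper implicitly does (Banach fixed point on the closed set $A$, slicewise in $\eta$), but for the Lipschitz bound you work directly at the fixed points: insert $\Psi(u_0,\eta_1)$, use condition 2 to get the term $\rho\norm{u_1-u_0}$, and absorb it into the left-hand side, which is legitimate since $\norm{u_1-u_0}$ is finite. The paper instead runs the estimate along the Picard iterates $u^{(m)}(\eta)=\Psi(u^{(m-1)}(\eta),\eta)$, proving by induction that $\norm{u^{(m)}(\eta_1)-u^{(m)}(\eta_0)}\le \delta\sum_{n=0}^{m}\rho^{n}\norm{\eta_1-\eta_0}_Y^{\al}$ and then letting $m\to\infty$ to obtain the same constant $\delta/(1-\rho)$. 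Your absorption argument is shorter and avoids the induction; the paper's iterate-level version has the side benefit of giving the bound uniformly along the approximating sequence $u^{(m)}(\eta)$, which is the form actually invoked later (e.g.\ in the proof of \thmref{thm6.1}, where the map is iterated finitely many times before passing to the limit). As a proof of the lemma as stated, your argument is complete.
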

\begin{proof}
	Fix $u_*\in A$. For every $\eta\in B$, let $u^{(0)}(\eta):=u_*$, $u^{(m)}(\eta)=\Psi(u^{(m-1)}(\eta),\eta)$ for $m=1,2,\ldots,$ and $u(\eta):=\lim_{m\to\infty} \Psi(u^{(m)}(\eta),\eta)$. This limit exists
	and is the unique fixed point of $\Psi$ in $A$, by Item 1-2 in the assumptions and 
	the standard contraction mapping theorem. 
	
	For the estimate \eqref{B.6},
	compute for $m=1,2,\ldots$
	\begin{align*}
		\norm{u^{(m)}(\eta_1)-u^{(m)}(\eta_0)}\le& \norm{\Psi(u^{(m-1)}(\eta_1),\eta_1)-\Psi(u^{(m-1)}(\eta_0),\eta_1)}\\&+
		\norm{\Psi(u^{(m-1)}(\eta_0),\eta_1)-\Psi(u^{(m-1)}(\eta_0),\eta_0)}\\
		\le& \rho \norm{u^{(m-1)}(\eta_1)-u^{(m-1)}(\eta_0)}+\delta\norm{\eta_1-\eta_0}_Y^\al\\
		\le& \delta\sum_{n=0}^m\rho^n\norm{\eta_1-\eta_0}_Y^\al.
	\end{align*}
	In the last step we use induction on $m$ and Item 3 in the assumptions. Taking $m\to \infty$ gives \eqref{B.6}. 
\end{proof}

\section{Global well-posedness of a linear Cauchy problem}
\label{sec:A}
In this section we study the global well-posedness of the problem \eqref{3.4}-\eqref{3.7}.
In particular, we show  the key map $\Psi$ introduced in \secref{sec:3.1}
is well-defined.

Indeed, for fixed $(\si^{(0)},\xi^{(0)})$, \eqref{3.8}, which is equivalent 
to the system \eqref{3.4}-\eqref{3.5} as we show below,
is a linear parabolic equation w.r.t. the drift Laplacian on the space $X^s(a)$,
together with a source term as a function of $(\si^{(0)},\xi^{(0)})$.
In view of this, the well-posedness results in this section is rather standard.
The small twist in our situation is that the drift Laplacian in \eqref{3.8} is slowly changing, as the drift parameter $a\sim 1/2$ is a function of time.

	For the main existence and regularity result in this section,
we need the following propagator estimate for the linearized operator $L(a)$.

\begin{lemma}
	\label{lemA.0}
	Fix a function $a(t),\,t\ge0$ satisfying $0\le a(t)-1/2\le2 \delta$. 
	For each $t$, define the linear propagator
	\begin{equation}
		\label{U}
		U(t):=e^{-L(a(t)}:X^{s-2}(a(t))\to X^s(a(t)),
	\end{equation}
	where the operator $L(a)$ is defined in \eqref{2.3}.
	Then for every constant $b\in[1/2-4\delta,1/2]$,
	 there holds 
	\begin{align}
		\label{U1}
		U(t)(X^{s-2}(b))\subset X^s(b),\\
		\label{U2}
		\norm{U(t)}_{s-2,b\to s,b}\le C(t).
	\end{align}
\end{lemma}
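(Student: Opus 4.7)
The plan is to exploit the tensor-product structure of $L(a)$ together with the explicit Mehler kernel for the drift Laplacian. Since $-\Lc_a$ acts only on the $y$ variable while $-(a/k)\Lap_\omega-2a$ acts only on $\omega$, the two pieces commute and the semigroup splits as $U(t) = e^{-tL_y(a)}\otimes e^{-tL_\omega(a)}$, where $L_y(a):=-\Lc_a$ and $L_\omega(a):=-(a/k)\Lap_\omega-2a$. The spherical factor is easy: $e^{-tL_\omega(a)} = e^{2at}\,e^{(at/k)\Lap_\omega}$, and the heat semigroup on the compact sphere is bounded with the standard elliptic regularisation on $\Sb^k$. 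The growth $e^{2at}$, coming from the unstable eigenvalue of $L$, will be absorbed into the constant $C(t)$.

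The core of the proof is controlling the Euclidean semigroup $e^{t\Lc_a}$ between the mismatched Gaussian weights $\rho_b$ with $b\le a$. First I would use the Mehler kernel representation
\[
e^{t\Lc_a}\phi(y)=\int K_a(t,y,z)\phi(z)\,dz,\qquad K_a(t,y,z):=(2\pi\sigma_t^2)^{-(n-k)/2}\exp\!\left(-\frac{|z-e^{-at}y|^2}{2\sigma_t^2}\right),
\]
with $\sigma_t^2:=(1-e^{-2at})/a$. Combining the probability identity $\int K_a(t,y,z)\,dz=1$ with Cauchy--Schwarz reduces the $X^0(b)$-bound for $e^{t\Lc_a}\phi$ to computing $\int K_a(t,y,z)\,e^{-b|y|^2/2}\,dy$. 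Completing the square in $y$ gives an expression of the form $C_1(t,a,b)\exp(-\Gamma(t,a,b)|z|^2/2)$ with
\[
\Gamma(t,a,b)=\frac{ab}{ae^{-2at}+b(1-e^{-2at})}.
\]
A short algebraic check shows $\Gamma\ge b$ precisely when $a\ge b$, which is our standing hypothesis. This yields $\norm{e^{t\Lc_a}\phi}_{0,b}\ls C(t)\norm{\phi}_{0,b}$ and thus the $s=0$ case of \eqref{U1}--\eqref{U2}.

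For higher regularity, I would use the commutator identity $[\di_{y^i},\Lc_a]=-a\,\di_{y^i}$, which upgrades to $\di_{y^i}\,e^{t\Lc_a}=e^{-at}\,e^{t\Lc_a}\,\di_{y^i}$; every $y$-derivative commutes through the Euclidean semigroup with an exponentially decaying prefactor, so higher-order $X^s(b)$-norms of $e^{t\Lc_a}\phi$ reduce to $X^s(b)$-norms of derivatives of $\phi$ already controlled by the $s=0$ bound. The gain of two derivatives in the $s-2\to s$ statement comes from the standard parabolic regularisation carried by the Mehler kernel, which contributes $\sigma_t^{-1}$ per extra $y$-derivative when differentiating the kernel directly in $y$. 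Angular derivatives are controlled by the Sobolev smoothing on $\Sb^k$. Combining the Euclidean and spherical factors produces the required estimate \eqref{U2} with a constant $C(t)$ bounded on any compact interval and depending only on $a$ and $b$.

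The main obstacle is the weight mismatch in the Gaussian integration: the computation of the inner integral converges only when the resulting exponent in $|z|^2$ remains negative, which is the borderline condition $b\le a$; equality $b=a$ corresponds to the natural Gaussian measure of the drift Laplacian, in which setting the argument is standard. Our hypothesis $b\in[1/2-4\delta,1/2]$ and $a(t)\in[1/2,1/2+2\delta]$ places us strictly in the allowed regime with a uniform gap, ensuring that $\Gamma$ stays uniformly above $b$ and the constants above remain bounded uniformly in $t$. Once this analytic input is secured, the tensor-product bookkeeping and the commutator arguments for derivatives are routine.
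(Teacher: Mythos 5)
Your argument is correct, but it proceeds along a genuinely different route from the paper. The paper's proof never touches the kernel: it writes $L(a(t))=L(b)+(b-a(t))V$ with $V:\phi\mapsto -\inn{y}{\grad_y\phi}+\tfrac1k\Lap_\om\phi+2\phi$, notes that $V$ is bounded from $X^s(b)$ to $X^{s-2}(b)$ (via $\norm{\inn{y}{\grad_y\phi}}_{s-2,b}\ls\norm{\phi}_{s,b}$) with relative size $O(\delta)$ since $\abs{b-a(t)}\le 6\delta$, and then exponentiates the resulting operator bound to produce $C(t)=e^{(1+\delta^{1/2})t\norm{L(b)}_{s,b\to s-2,b}}$; the mismatched weight enters only through this perturbative comparison to the pivot $b$. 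You instead exploit the tensor-product structure $e^{-tL(a)}=e^{t\Lc_a}\otimes e^{2at}e^{(at/k)\Lap_\om}$ and the explicit Mehler kernel, reducing the weight mismatch to a Gaussian completing-the-square computation; your exponent $\Gamma(t,a,b)=ab/(ae^{-2at}+b(1-e^{-2at}))$ is correct, and $\Gamma\ge b$ exactly when $a\ge b$, which is the standing hypothesis, while the commutation $\di_{y^i}e^{t\Lc_a}=e^{-at}e^{t\Lc_a}\di_{y^i}$ and spherical heat smoothing handle derivatives. What each approach buys: the paper's argument is two lines and uniform in all parameters, but it is schematic about how a generator bound that loses two derivatives yields a bound on the semigroup between the mismatched spaces; yours is self-contained and quantitative, identifies $b\le a$ as the sharp threshold, and actually exhibits the $X^{s-2}(b)\to X^s(b)$ smoothing rather than asserting it. One small correction to your closing remark: the two-derivative gain costs a factor of order $\sigma_t^{-2}\sim t^{-1}$ as $t\to0^+$, so your $C(t)$ is not bounded on compact intervals containing $t=0$ (nor can it be, since $U(0)=\id$ does not smooth); this is harmless for the lemma as stated, where $C(t)$ is an unspecified function of $t$, but you should say "locally bounded on $(0,\infty)$" rather than "on any compact interval."
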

\begin{proof}

By the spectral theorem, it suffices to prove the restriction of $L(a(t))$ maps
$X^s(b)\subset X^s(a(t))$ continuously into $X^{s-2}(b)$.

Using the formula \eqref{2.3}, we have the relation $$L(a(t))=L(b)+(b-a(t))V,\quad V:\phi\mapsto (-\inn{y}{\grad_y(\cdot)}+\tfrac{1}{k}\Lap_\om+2).$$
The operator $L(b)$ is continuous from $X^s(b)\to X^{s-2}(b)$.
The operator $V$, which is independent of the parameters $a(t)$ and $b$,
satisfies $\norm{V\phi}_{s-2,b}\le c_{n,k}\norm{\phi}_{s,b}$.
(For this, one uses the estimate $\norm{\inn{y}{\grad_y\phi}}_{s-2,b}\le c_n \norm{\phi}_{s,b}$,
c.f. \cite{MR2289828}*{Sec. 4}.)
Since $\abs{b-a(t)}\le 6\delta$, it follows that for sufficiently small $\delta$ there holds
\begin{equation}\label{A.8}
	\norm{L(a(t))\phi}_{s-2,b}\le (1+\delta^{1/2}) \norm{L(b)}_{s,b\to s-2,b}\norm{\phi}_{s,b},\quad t\ge0.
\end{equation}
This shows the mapping property \eqref{U1} and the estimate \eqref{U2} with $C(t)=e^{(1+\delta^{1/2})t\norm{L(b)}_{s,b\to s-2,b} }.$

\end{proof}


\begin{lemma}
	\label{lemA.1}
	Fix $a_0>1/2$ in \eqref{3.6}. Let $b=\tfrac{1}{2}  -4\delta$ for some
	$0<\delta<1/8$.
	For every $\eta_0\in X^s(b)\cap \Sc$ (see \defnref{defn3.2}),
	every fixed path $(\si^{(0)},\xi^{(0)})\in \Ac_\delta$,
	and any constants $\beta_i,\g_{ij}$ in \eqref{3.7},
	the problem \eqref{3.4}-\eqref{3.7} has 
	a unique global solution in the space
		$$Lip([0,\infty),\Si)\times (C([0,\infty),X^s(b))\cap C^1([0,\infty),X^{s-2}(b))),\quad b=\tfrac{1}{2}-4\delta.$$

	Moreover, for all $\tau\ge0$, the solution satisfies the orthogonality condition (c.f. \eqref{2.14})
	\begin{equation}
		\label{A.0}
					\inn{\xi(\tau)}{\Si^{(m,n)}(a^{(0)}(\tau))}_{a^{(0)}(\tau)},\quad \;(m,n)=(0,0),(0,1),(1,1).
	\end{equation}
\end{lemma}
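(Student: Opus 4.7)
The plan is to first reduce the coupled system \eqref{3.4}-\eqref{3.5} to the single linear equation \eqref{3.8} for $\xi$ alone, by substituting the affine expression for $\dot\si$ given by \eqref{3.5} into the term $\di_\si W(\si^{(0)})\dot\si$ of \eqref{3.4}. This yields \eqref{3.8} with the identifications $\W(\si^{(0)}):=\di_\si W(\si^{(0)})\vec F(\si^{(0)})$ and $\tilde N:=N(a^{(0)},\xi^{(0)})+\di_\si W(\si^{(0)})\vec M(\si^{(0)},\xi^{(0)})$ matching the formulae \eqref{A.1}. Conversely, once $\xi$ is constructed, $\si$ is recovered by integrating \eqref{3.5} against the initial datum \eqref{3.6} (which lies in a compact region of $\Si$), so the coupled and single-equation formulations are equivalent.

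For the isolated equation \eqref{3.8}, the plan is to use the Duhamel representation
$$\xi(\tau)=U(\tau)\xi(0)-\int_0^\tau U(\tau-\tau')\bigl[\W(\si^{(0)}(\tau'))\xi(\tau')+\tilde N(a^{(0)}(\tau'),\xi^{(0)}(\tau'))\bigr]\,d\tau',$$
where $U$ is the propagator analysed in \lemref{lemA.0}. That lemma delivers $U(\tau)\colon X^{s-2}(b)\to X^s(b)$ with bounds uniform over the admissible window $a^{(0)}(\tau)-1/2\in[0,2\delta]$. The zero-order operator $\W$ is bounded on $X^s(b)$ because $\di_\si W$ is smooth on the admissible region (\remref{remW}) and $\vec F$ involves bounded linear functionals against the eigenfunctions of \lemref{lem2.2}; the source $\tilde N(a^{(0)}(\tau'),\xi^{(0)}(\tau'))$ is bounded in $X^{s-2}(b)$ uniformly in $\tau'$ by the nonlinear estimate \eqref{A.3} together with \eqref{3.2} along the fixed path $\xi^{(0)}\in\Ac_\delta^\xi$. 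A Banach fixed-point argument in $C([0,T_0],X^s(b))$ gives local existence and uniqueness, and the uniform bounds above combined with Gronwall promote this to a global solution. Time regularity $\xi\in C([0,\infty),X^s(b))\cap C^1([0,\infty),X^{s-2}(b))$ is read off the Duhamel formula and \eqref{3.8} itself; integrating \eqref{3.5} then recovers $\si\in Lip([0,\infty),\Si)$.

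For the orthogonality \eqref{A.0} at $\tau=0$: by definition of $\Sc$, $\eta_0$ is orthogonal to every zero-unstable eigenmode of $L(a_0)$, and the correction terms in \eqref{3.7} lie in $\spn\{\Si^{(1,0)}(a_0),\Si^{(2,0)}(a_0)\}$, which is orthogonal to $\Si^{(0,0)},\Si^{(0,1)},\Si^{(1,1)}$ by the mutual orthogonality of eigenfunctions with distinct $(n,l)$ or $(m,i,j)$ multi-indices recorded in \lemref{lem2.2}. Preservation for $\tau>0$ follows from the design of the modulation equations \eqref{3.5}: projecting \eqref{3.4} onto $\Si^{(m,n)}(a^{(0)}(\tau))$ for $(m,n)=(0,0),(0,1),(1,1)$ and using the very identities that define $\vec F,\vec M$ makes the $\tau$-derivative of $\inn{\xi}{\Si^{(m,n)}(a^{(0)})}_{a^{(0)}}$ vanish, exactly as in \lemref{lem2.3}.

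The main obstacle I expect is keeping the propagator and coefficient bounds uniform on $[0,\infty)$ despite the non-autonomous generator $L(a^{(0)}(\tau))$. This is resolved by \lemref{lemA.0}, which rewrites $L(a^{(0)}(\tau))$ as $L(b)$ plus an $O(\delta)$ perturbation independent of $\tau$ via \eqref{A.8}; admissibility of $\si^{(0)}$ guarantees the perturbation stays small for all time, so the parabolic estimates apply globally rather than only locally in $\tau$.
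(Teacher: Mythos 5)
Your proposal follows essentially the same route as the paper: reduce \eqref{3.4}--\eqref{3.5} to the single equation \eqref{3.8} with $\W=\di_\si W\vec F$ and $\tilde N=N+\di_\si W\vec M$, run a Duhamel/fixed-point argument in $C([0,T],X^s(b))$ using the propagator bounds of \lemref{lemA.0} (i.e.\ the perturbative identity \eqref{A.8}), extend globally, recover $\si$ by integrating \eqref{3.5}, and obtain \eqref{A.0} from the choice of $\Sc$, the mutual orthogonality of the eigenfunctions in \lemref{lem2.2}, and the design of the modulation equations as in \lemref{lem2.3}. The only substantive difference is cosmetic: you invoke Gronwall to globalize, whereas the paper notes that the local existence time depends only on the constants in \eqref{A.2} and \eqref{U2}, not on the data, and iterates on $[T,2T]$, etc.

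One justification, however, is misattributed and would not survive as written: you bound the source term $\tilde N(a^{(0)},\xi^{(0)})$ in $X^{s-2}(b)$ ``by \eqref{A.3} together with \eqref{3.2}.'' Estimate \eqref{A.3} controls only the finitely many projections $P^{(m,n)}N$, and in the norm $\norm{\cdot}_{s-2,a}$ with $a\ge 1/2+\delta$; the paper stresses (Appendix \ref{sec:C}) that no bound of the form $\norm{N}_{s-2}\ls\norm{\xi}_s^2$ is available, and \eqref{3.2} gives decay only in the pivot norm $\norm{\cdot}_s$, not in $\norm{\cdot}_{s,b}$ with $b<1/2$. The correct ingredient is the pivot condition \eqref{3.3'} on $\xi^{(0)}$ (Item 3 of \defnref{defn3}) combined with the continuity of $N:X^s(b)\to X^{s-2}(b)$ read off from \eqref{N}, together with \remref{remW} for the $\di_\si W\vec M$ part; this is exactly the bound \eqref{A.3'} used in \lemref{lemA.2}. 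With that substitution your argument closes, since for the linear fixed-point scheme one needs only that the frozen source is bounded and continuous in $\tau$ with values in $X^{s-2}(b)$, not any quadratic smallness.
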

\begin{proof}
	Once the coefficients are frozen, \eqref{3.4}-\eqref{3.5} are decoupled. Thus, we first solve for $\xi$, and then $\si$ can be solved by integrating \eqref{3.5}.

	Plugging \eqref{3.5} to \eqref{3.6}, we find
	\begin{equation}
		\label{A.1}
		\dot \xi+L(a^{(0)})\xi= -\W (\si^{(0)})\xi- \tilde N(a^{(0)},\xi^{(0)}).
	\end{equation}
	Here, for $\vec F,\,\vec M$ as in \eqref{3.5}, we define
	$\W(\si^{(0)}):=\di_\si W(\si^{(0)})\vec F(\si^{(0)})$,  
	and \begin{equation}\label{tildeN}
		\tilde N(a^{(0)},\xi^{(0)}):= N(a^{(0)},\xi^{(0)})+		\di_\si W(\si^{(0)})\vec M(\si^{(0)},\xi^{(0)}),
	\end{equation} which
	is the sum of the (frozen) nonlinearity in both equations. 
	
	Consider the two terms in the the r.h.s. of \eqref{A.1}.
	The second, nonlinear term is studied in \secref{sec:C} below. 
	In the first term, 
	the linear operator $\W$ satisfies the uniform bound 
	\begin{equation}\label{A.2}
		\norm{\W (\si^{(0)}(\tau))}_{s,b\to s,b}\le c \quad (s\ge 2, b>0,\tau\ge0),
	\end{equation} 
	for some absolute constant $c>0$ (say, $500$).
	To get this, one uses the explicit form of $W(\si)$ (see also \remref{remW}), the decay condition \eqref{3.1} of $\si$,
	and the formulae \eqref{2.17}-\eqref{2.15},
	in which the linear terms in $\xi$ defines $\vec F$.

	Fix some  $T>0$ to be determined.
	Define the space
	$$A:=C([0,T],X^s(b)),\quad b:=\tfrac{1}{2}-4\delta,
	\norm{\phi}_A:=\sup_{0\le \tau \le T} \norm{\phi(\tau )}_{s,b}.$$
	Clearly, this $\norm{\cdot}_A$ turns $A$ into a Banach space.
	
	For any initial configuration $\xi(0)$ of the form \eqref{3.6},  together with
	a path $\phi \in A$,
	consider the map
	\begin{equation}
		\label{A.4}
		F:\phi \mapsto \del{\tau\mapsto U(\tau)\xi(0)-\int _0^\tau U(\tau-\tau') \del{\W(\si^{(0)})\phi(\tau')+ \tilde N(a^{(0)},\xi^{(0)})} \,d\tau'}.
	\end{equation}
	Here $U(\tau):=e^{-L(a^{(0)})\tau }$ is the propagator 
	associated to $L(a^{(0)}(\tau))$.
	By Duhamel's principle, any fixed point of the map $F$ is a (mild) local solution
	to \eqref{A.1}. 
	
	With the choice $a=a^{(0)}(\tau)$ satisfying \eqref{3.1} and $b=1/2-4\delta$,
	the assumption of \lemref{lemA.0} is satisfied. By \eqref{U1}, 
	we see that $F(A)\subset A$. 
	
	Take $\phi_1,\,\phi_2\in A$ and consider the difference 
	$$(F(\phi_1)-F(\phi_2))\vert_\tau=-\int_0^\tau U(\tau-\tau')\W (\si^{(0)}(\tau'))(\phi_1(\tau')-\phi_2(\tau')),\,d\tau'.$$
	By the propagator estimate $\norm{U(\tau)}_{s-2,s}\le C(\tau)$
	in \eqref{U2}, together with \eqref{A.2}, from here we can choose 
	the length $T$ of the existence interval, s.th.
	the map $F$ in \eqref{A.4} is a contraction in $A$.
	This gives a local solution $\xi(\tau),\tau\le T$ to \eqref{A.1}.
	This choice $T$ depends only on the constant $c>0$ in \eqref{A.2}
	and the constant $C(t)$ in the propagator estimate \eqref{U2}.
	(The latter is large as $t$ increases due to the unstable modes,
	and therefore is a constraint for choosing $T$.)
	Since this choice of  $T$ does not depend on the initial condition $\xi(0)$,
	we can iterate this process starting from $\xi(T)$ to get a local solution
	on $T\le \tau\le 2T$, etc. 
	It follows that 
	we have now a global solution $\xi \in C([0,\infty),X^s(b))$ so long as the initial condition $\xi(0)\in X^s(b)$.
	
	Moreover, by \lemref{lemB.2}, in particular the estimate \eqref{B.6}, 
	we find the fixed point $\xi=\xi(\tau)$ to be locally Lipschitz in $\tau$.
	By Rademacher's theorem, this implies the path $\di_\tau \xi$ exists
	everywhere. Thus we can differentiate the fixed point equation $\xi(\tau)=F(\xi(\tau))$
	w.r.t. $\tau$ to get \eqref{A.1} for a.e. $\tau$. 
	On the other hand, by \eqref{A.8} and the expression \eqref{N} below, 
	for $(\si^{(0)},\xi^{(0)})\in\Ac_\delta$, so long as $\xi(\tau)$ lies in $X^s(b)$,
	the path $$\tau\mapsto L(a^{(0)}(\tau))\xi(\tau)+\W (\si^{(0)}(\tau))\xi(\tau)+\tilde N(a^{(0)}(\tau),\xi^{(0)}(\tau)) \in X^s(b)$$
	 lies  in $X^{s-2}(b)$ and is continuous.
	Thus we see in fact $\xi\in C^1([0,\infty),X^{s-2}(b)).$

	As in \secref{sec:2.3}, once we have solved $\si$ from \eqref{3.5},
	we can conclude the orthogonality condition \eqref{A.0}
	as long as it holds at $\tau=0$. 
	The latter is indeed the case, because of the choice for $\eta_0$ as $X^0(a_0)$-perpendicular
	to all the zero-unstable modes of $L(a_0)$,
	and the fact that the remaining terms in \eqref{3.6} already
	satisfy \eqref{A.0} at $\tau=0$, owning to the orthogonality 
	among various eigenfunctions of $L(a_0)$.

\end{proof}

%
Next, we consider the regularity of the solution constructed above.
\begin{lemma}
\label{lemA.2}
	Let $(\si,\xi)$ be the global solution constructed in \lemref{lemA.1}.
	Let $P^{(m,n)}$ be the projection onto the eigenspaces spanned
	by the vectors $\Si^{(m,n)(i,j,l)}(a)$ defined in \eqref{2.4}-\eqref{2.8} above.
	Then, for $s\ge2$, there holds the a priori estimate
\begin{equation}
	\label{A.5}
	\norm{P^{(m,n)}\xi}_{s}\ls \norm{\xi}_{0}.
\end{equation}

Moreover, this solution satisfies the pivot condition
\begin{equation}
	\label{A.7}
	\norm{\xi(\tau )}_{s,b}^2\le c\quad b:=\tfrac{1}{2}-4\delta.
\end{equation}
c.f. \eqref{B.7} and \eqref{3.3'}.

Moreover, the path $\si(\tau)$ satisfies
\begin{equation}
	\label{A.9}
	\abs{\si(\tau)-\si(0)}\ls \tau\sup_{0\le\tau'\le\tau}\norm{\xi(\tau')}_s.
\end{equation}
where $a_0>1/2$ is as in \eqref{3.6}.
\end{lemma}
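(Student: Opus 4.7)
The plan is to handle the three claims separately, using different techniques: pure functional analysis of finite-dimensional projections for \eqref{A.5}, a stable/unstable decomposition plus coercive Duhamel estimates for \eqref{A.7}, and direct integration of the modulation ODE \eqref{3.5} for \eqref{A.9}.

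For \eqref{A.5}, I would exploit that $P^{(m,n)}$ has image in the finite-dimensional span of the explicit eigenfunctions $\Si^{(m,n)(i,j,l)}(a^{(0)}(\tau))$ from \eqref{2.4}-\eqref{2.8}. Writing
\begin{equation*}
P^{(m,n)}\xi=\sum_{i,j,l}\frac{\inn{\xi}{\Si^{(m,n)(i,j,l)}(a^{(0)})}_{a^{(0)}}}{\norm{\Si^{(m,n)(i,j,l)}(a^{(0)})}_{0,a^{(0)}}^2}\Si^{(m,n)(i,j,l)}(a^{(0)}),
\end{equation*}
Cauchy--Schwarz in $\inn{\cdot}{\cdot}_{a^{(0)}}$ bounds each coefficient by $\norm{\xi}_{0,a^{(0)}}\norm{\Si^{(m,n)(i,j,l)}}_{0,a^{(0)}}$. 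Admissibility (Definition \ref{defn2.1}) gives $a^{(0)}\ge 1/2$, so $\norm{\xi}_{0,a^{(0)}}\le\norm{\xi}_0$, while each eigenfunction is an explicit polynomial in $y$ times a spherical harmonic whose $X^s(a^{(0)})$-norm is uniformly bounded on the compact range $a^{(0)}\in[a_0-\delta,a_0+\delta]$. Summing finitely many terms yields $\norm{P^{(m,n)}\xi}_s\ls\norm{\xi}_0$.

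For \eqref{A.7}, I would decompose $\xi=Q\xi+\sum_{(m,n)}P^{(m,n)}\xi$ with $Q=1-\sum P^{(m,n)}$. The orthogonality condition \eqref{A.0} kills $P^{(0,0)}\xi$, $P^{(0,1)}\xi$, $P^{(1,1)}\xi$. The remaining projections $P^{(1,0)}\xi$ and $P^{(2,0)}\xi$ have coefficients evolving by the decoupled scalar ODEs \eqref{4.4}-\eqref{4.5}, whose source terms $f_i,h_{ij}$ are bounded by the nonlinear estimate \eqref{A.3} applied to $\xi^{(0)}\in\Ac_\delta^\xi$; with the special initial data produced by Theorem \ref{thm4.1}, \lemref{lemB.1} gives uniform-in-$\tau$ bounds on $\beta_i,\g_{ij}$, and \eqref{A.5} upgrades these to $X^s(b)$-bounds on the finite-dimensional part. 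For the stable remainder $Q\xi$, the restriction $\b L=QL(a^{(0)})Q$ is uniformly coercive on $X^s(a^{(0)})$ with a spectral gap independent of $\tau$ (thanks to admissibility of $a^{(0)}$), yielding the propagator decay $\norm{e^{-\tau\b L}}_{s-2,a^{(0)}\to s,a^{(0)}}\ls e^{-c\tau}$; applying Duhamel together with \eqref{A.2} on $\W\xi$ and \eqref{A.3} on $\tilde N$ produces a uniform $X^s(a^{(0)})$-bound on $Q\xi$, which by \lemref{lemB.0} converts into the $X^s(b)$-bound \eqref{A.7}. I expect this step to be the main obstacle, since we must carefully coordinate spectral information on the $\tau$-varying space $X^s(a^{(0)}(\tau))$ with estimates measured against the fixed pivot weight $b=1/2-4\delta$; the interpolation in \lemref{lemB.0} and the uniform operator bound \eqref{A.8} are precisely what make the translation possible.

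For \eqref{A.9}, I simply integrate $\dot\si=\vec F(\si^{(0)})\xi+\vec M(\si^{(0)},\xi^{(0)})$ from $0$ to $\tau$. By the definition of $\vec F$ (see \secref{sec:3.1}), its entries are bounded linear functionals on $X^0\subset X^s$, with operator norms controlled uniformly in $\tau$ by admissibility of $\si^{(0)}$, hence $\abs{\vec F(\si^{(0)})\xi(\tau')}\ls\norm{\xi(\tau')}_s$; integration yields a contribution of order $\tau\sup_{0\le\tau'\le\tau}\norm{\xi(\tau')}_s$. The residual term $\vec M(\si^{(0)},\xi^{(0)})$ is quadratic in $\xi^{(0)}$ by \eqref{A.3}, so $\abs{\vec M}\ls\delta^2\br{\tau'}^{-4}$, an integrable tail whose total contribution is bounded by a constant and absorbed into the right-hand side of \eqref{A.9}. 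Combining both contributions gives the desired bound.
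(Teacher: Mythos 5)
Your arguments for \eqref{A.5} and \eqref{A.9} are fine: for \eqref{A.5} you use the finite rank of $P^{(m,n)}$ and Cauchy--Schwarz against the explicit eigenfunctions (more elementary than the paper, which invokes Bochner-formula elliptic estimates following Colding--Minicozzi), and for \eqref{A.9} you integrate \eqref{3.5} exactly as the paper does (your ``absorb the $\vec M$ tail'' remark is loose, but no looser than the paper's one-line argument).

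The genuine gap is in your proof of the pivot condition \eqref{A.7}, and it is a direction-of-norms error. Your Duhamel/coercivity scheme lives in the time-varying spaces $X^s(a^{(0)}(\tau))$ with $a^{(0)}\ge 1/2$; these norms are \emph{weaker} than the fixed norm $\norm{\cdot}_{s,b}$ with $b=\tfrac12-4\delta$, since for $b\le a$ one has $X^s(b)\subset X^s(a)$ and $\norm{\cdot}_{s,a}\le\norm{\cdot}_{s,b}$. A uniform $X^s(a^{(0)})$-bound on $Q\xi$ therefore cannot yield the $X^s(b)$-bound \eqref{A.7}, and \lemref{lemB.0} cannot be used to ``convert'' in that direction: the interpolation takes the pivot bound \eqref{B.7} (i.e.\ precisely an a priori $X^s(b)$-bound on $\xi$) as an \emph{input}, together with an $X^s(a)$-bound, and outputs an $X^s(1/2)$-bound. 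So the last step of your argument assumes what it is trying to prove. Two further problems compound this. First, you invoke \thmref{thm4.1} for the special choice of $\beta_i,\g_{ij}$; but \lemref{lemA.2} is stated for the solution of \lemref{lemA.1} with \emph{arbitrary} constants, and, worse, the proof of \thmref{thm4.1} itself cites \lemref{lemA.2} to justify the interpolation \eqref{B.8} (see the derivation of \eqref{4.11}), so your route is circular within the paper's architecture. Second, \eqref{A.3} is a quadratic bound only for the finite-rank projections $P^{(m,n)}N$; the paper stresses that no estimate $\norm{N}_{s-2}\ls\norm{\xi}_s^2$ holds, so you cannot apply \eqref{A.3} to $\tilde N$ or $Q\tilde N$ in the Duhamel integral. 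The paper's proof avoids all of this by working entirely in the fixed space $X^s(b)$: it uses \eqref{A.8} (uniform boundedness of $L(a^{(0)}(\tau)):X^s(b)\to X^{s-2}(b)$), the bound \eqref{A.2} on $\W$, and the \emph{linear} source estimate \eqref{A.3'} $\norm{\tilde N(a^{(0)},\xi^{(0)})}_{s-2,b}\ls\norm{\xi^{(0)}}_{s,b}$, where $\norm{\xi^{(0)}}_{s,b}$ is bounded because the frozen path $\xi^{(0)}\in\Ac_\delta$ satisfies \eqref{3.3'} by definition; continuous dependence for the linear flow in $X^s(b)$ then gives \eqref{A.7} with no spectral decomposition and no appeal to \thmref{thm4.1}.
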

\begin{proof}

	
	For the unstable parts $\xi^{(m,n)}:=P^{(m,n)}\xi$,
	consider the eigenvalue equation $L(a)\xi^{(m,n)}=\lambda_{m,n} \xi^{(m,n)}$
	in the space $X^s\subset X^s(a)$.
	(This problem is well-posed by \eqref{A.8}.)
	Following \cite{MR3374960}*{Lem. 3.11}, we use Bochner's formula for the drift
	Laplacian
	to get elliptic estimates of the form $\norm{\xi^{(m,n)}}_s\le C_{m,n}\norm{\xi^{(m,n)}}_{s-2}$.
	These imply $ \norm{\sum \xi^{(m,n)}}_s\le (\sum C_{m,n} )\norm{\sum\xi^{(m,n)}}_{s-2}$.
	Thus \eqref{A.5} follows.

	For \eqref{A.7}, consider the equation \eqref{A.1}.
	In \lemref{lemA.1} we have shown the existence of $X^s(b)$-solution $\xi$ 
	to this linear parabolic equation. 
	Standard semi-group theory
	for such flows provides  continuous dependence of the $X^s(b)$-norm of the 
	solution $\xi$ 
	in terms of the $X^{s-2}(b)$-norm of the data in \eqref{A.1}. 
	In view of \eqref{A.8} and \eqref{A.2}, it
	suffices to show the source term  
	$\tilde N(a^{(0)},\xi^{(0)})$ lies in $X^{s-2}(b)$.
	(Here we are not claiming any decay estimates in the $X^s(b)$-norm.)
	
	Consider the two terms in the r.h.s. of 
	$$\tilde N(a^{(0)},\xi^{(0)})= N(a^{(0)},\xi^{(0)})+		\di_\si W(\si^{(0)})\vec M(\si^{(0)},\xi^{(0)}).$$
	By the expression \eqref{N} below, 
	the nonlinear map $N$ is continuous from $X^s(b)\to X^{s-2}(b)$.
	The second term in $\tilde N$ 
	is of the order
	$\norm{N}_{s-2,b}\norm{\xi^{(0)}}_{0,b}$. 
	The conclusion is the continuous dependence
	\begin{equation}
		\label{A.3'}
		\norm{\tilde N(a^{(0)},\xi^{(0)})}_{s-2,b}\le c  \norm{\xi^{(0)}}_{s,b} \quad (\tau\ge0),
	\end{equation}
	where the  constant $c$ in the r.h.s. depends on that in \eqref{A.3} only. Estimate
	\eqref{A.3'}, the assumption that $\xi^{(0)}$ satisfies the pivot condition \eqref{B.7},
	 together with  the continuous dependence mentioned earlier, 
	gives \eqref{A.7}.
	
	Lastly, \eqref{A.9} follows from integrating \eqref{3.5}.
	
\end{proof}

\section{Properties of the Nonlinearity}
\label{sec:C}
In this section we  prove two key estimates of the nonlinear map $N$ in \eqref{2.1'}.

For $a>0$, the nonlinear map $N(a,\cdot):X^s(a)\to X^{s-2}(a)$ is defined in \eqref{2.1'},
as the remainder in the expansion $F'_a(\sqrt{k/a}+\xi)=L(a)\xi+N(a,\xi)$,
which is valid in a neighbourhood around zero in $X^s(a)$ due to the 
$C^2$ regularity of the $F$-functional.
By this definition, 
\eqref{F'}, and \eqref{2.3}, we find that $N(a,\xi)$ is given by
\begin{align}
	\label{N}
		\begin{split}
			N(a,\xi)=&\left(\frac{a}{k}-v^{-2}\right)\Lap_\om \xi
			-av+kv^{-1}+2a\xi\\
			&+N_1(v),
		\end{split}\\
		\label{N1}
	\begin{split}
		N_1(v)=&\frac{v^{-4} \inn{\grad^2_\om v\grad_\om v}{\grad_\om v}-v^{-3}\abs{\grad_\om v}^2}{1+\abs{\grad_yv}^2+v^{-2}\abs{\grad_\om v}^2}\\
		&-\frac{2v^{-2} \inn{\grad_yv}{\inn{\grad_\om v}{\grad_\om}\grad_yv}
			-\inn{\grad^2_yv\grad_yv}{\grad_yv} }{1+\abs{\grad_yv}^2+v^{-2}\abs{\grad_\om v}^2},
	\end{split}
\end{align}
where $v=\sqrt{k/a}+\xi$. This expansion
is obtained in e.g. \cite{MR3397388}*{Chap.1}, \cite{MR4303943}*{Sect.3}. 

As pointed out in \cite{MR3374960}, though 
$N(a,\xi)$ arises as the quadratic remainder 
from the expansion of the $X^0(a)$-gradient $F_a'$,
due to the decay property of the Gaussian measure,
one cannot establish a quadratic estimate
of the form $\norm{N(a,\xi)}_{s-2}\ls \norm{\xi}_s^2$.

However, it is possible to get some quadratic estimate for
 the projection of $N(a,\xi)$ into  specific eigenspaces of $L(a)$, owning to 
 the explicit forms of the latter.
As far as the decay estimates in Sect. \ref{sec:4}-\ref{sec:6}
are concerned, 
it suffices  to bound the projections of $N$ 
onto the span of \eqref{2.6} and \eqref{2.8}.

\begin{lemma}\label{lemC1}
	Fix $0<\delta\ll1$ and suppose $1/2+\delta\le a \le 1/2+2\delta$.
	Let $P^{(m,n)}$ be the projection onto the eigenspaces spanned
	by the vectors $\Si^{(m,n)(i,j,l)}(a)$ defined in \eqref{2.4}-\eqref{2.8} above.
	Then there hold
	\begin{equation}
		\label{A.3}
		\norm{P^{(m,n)}N(a,\xi)}_{s-2,a}\le c\norm{\xi}_s^2,
	\end{equation}
	where the constant $c>0$
	on the r.h.s. depends on dimension only.
\end{lemma}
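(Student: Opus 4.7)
The plan is to exploit two structural facts: (i) each eigenfunction $\Si^{(m,n)(i,j,l)}(a)$ listed in \eqref{2.4}-\eqref{2.8} is a polynomial in $(y,\om)$ of degree at most two; (ii) $\ran P^{(m,n)}$ is a finite-dimensional subspace of dimension depending only on $n,k$. Consequently the norm $\|P^{(m,n)}N\|_{s-2,a}$ is equivalent, with constants depending only on $n,k$ and the range $a\in[1/2+\delta,1/2+2\delta]$, to $\sum_{i,j,l}|\inn{N(a,\xi)}{\Si^{(m,n)(i,j,l)}(a)}_a|$, since the denominators $\|\Si^{(m,n)(i,j,l)}\|_{0,a}^2$ are uniformly bounded above and below. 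The task therefore reduces to bounding each individual inner product by $\|\xi\|_s^2$.

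The first step is to absorb the polynomial weight $|\Si^{(m,n)(i,j,l)}(y,\om)|\lesssim 1+|y|^2$ into the surplus Gaussian decay of $\rho_a$ over $\rho_{1/2}$: since $a\ge 1/2+\delta$ one has the pointwise bound
\begin{equation*}
|\Si^{(m,n)(i,j,l)}|\,\rho_a\le C\,\rho_{1/2},
\end{equation*}
with $C$ depending only on $n,k,\delta$. Next, I would expand $N(a,\xi)$ via \eqref{N}-\eqref{N1} as a power series in $\xi$ and its first two derivatives, starting at second order, using the geometric series expansion of $v^{-1},v^{-2}$ around $\sqrt{k/a}$ and of the denominator $(1+|\grad_y v|^2+v^{-2}|\grad_\om v|^2)^{-1}$ appearing in $N_1$. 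The leading quadratic terms are of the schematic form $\xi^2$, $\xi\Lap_\om\xi$, $|\grad_y\xi|^2$, and $|\grad_\om\xi|^2$; for each, a single Cauchy--Schwartz pairing in $L^2(\rho_{1/2})$ combined with the polynomial absorption above yields a bound by $\|\xi\|_s^2$, requiring only $s\ge 2$ to control the derivatives that actually appear and avoiding any need for a global $L^\infty$ bound at the quadratic level.

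The principal obstacle is the contribution from cubic and higher-order terms in the expansion of $N$: these require $\|\xi\|_\infty$-smallness both to sum the geometric series for $v^{-1},v^{-2}$ and the denominator of $N_1$, and to absorb the higher-order contributions into $\|\xi\|_s^2$ times a small prefactor. Since the Gaussian weighted $X^s$-norm does not immediately dominate the unweighted $L^\infty$-norm, one must use the weighted Sobolev embedding available in this setting: the compact factor $\Sb^k$ provides a spherical Sobolev embedding for $s\ge 2$, and the Gaussian factor on $\Rb^{n-k}$ provides enough decay to control $\|\xi\|_{L^\infty_{\loc}}$ by $\|\xi\|_s$ up to a dimension-dependent constant. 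Once this localized $L^\infty$ bound is secured, each higher-order term $\xi^j$ with $j\ge 3$ contributes a factor $\|\xi\|_\infty^{j-2}\|\xi\|_s^2$ and is absorbed into the quadratic bound provided $\|\xi\|_s$ is sufficiently small, which is the regime in which the lemma is applied throughout Sections \ref{sec:4}-\ref{sec:6}.
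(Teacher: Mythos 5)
Your reduction to the finitely many inner products $\inn{N(a,\xi)}{\Si^{(m,n)(i,j,l)}(a)}_a$ and the absorption of the polynomial weight of the eigenfunctions into the surplus Gaussian factor $\rho_{a-1/2}$ (using $a\ge 1/2+\delta$) is exactly the mechanism of the paper's proof, which then bounds the resulting integral $\int u\,\rho_{1/2}$ by $\norm{\xi}_s^2$. Where you diverge is in how the quadratic structure of $N$ is obtained: the paper does not expand $N$ in a power series at all, but imports the pointwise estimate \eqref{C.0} from \cite{MR3374960}, which bounds $N(a,\xi)$ pointwise by $(1+\abs{y})(\abs{\xi}^2+\abs{\grad\xi}^2+\abs{\grad^2\xi}^2)$, so that no control of higher-order terms or of the denominators in \eqref{N1} is needed beyond that single inequality.

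This is where your argument has a genuine gap. To sum the geometric series for $v^{-1},v^{-2}$ and for the denominator $(1+\abs{\grad_y v}^2+v^{-2}\abs{\grad_\om v}^2)^{-1}$, and to absorb the terms $\xi^j$, $j\ge 3$, you need a \emph{global} pointwise smallness (or at least boundedness) of $\xi$ and its first derivatives on the whole cylinder. The Gaussian weighted norm $\norm{\xi}_s$ gives no such control: a function can grow like $e^{a\abs{y}^2/4}$ as $\abs{y}\to\infty$ and still have finite $X^s(a)$-norm, so the expansion of $v^{-1}$ around $\sqrt{k/a}$ need not converge where $\abs{\xi}\gtrsim \sqrt{k/a}$, and the factor $\norm{\xi}_\infty^{j-2}$ you invoke is not finite in general. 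The paper itself stresses this obstruction (``we lack embedding theorems of Gaussian weighted Sobolev spaces into H\"older spaces''), which is precisely why it routes the proof through the Colding--Minicozzi pointwise bound rather than an expansion plus an embedding. Even your weaker claim of $L^\infty_{\loc}$-control by $\norm{\xi}_s$ with a dimension-independent requirement $s\ge 2$ is not correct: locally $X^s$ is an ordinary Sobolev space on an $n$-dimensional manifold, so one needs $s>n/2$, and in any case local control does not tame the denominators at large $\abs{y}$, which is where the weight fight happens. Finally, note that your route would prove \eqref{A.3} only under a smallness hypothesis on $\norm{\xi}_s$ (and implicitly on $\norm{\xi}_{C^1}$), which is not part of the statement; to close the argument as the paper does, you should either invoke \eqref{C.0} directly or supply an independent pointwise quadratic bound for $N$ valid in the graphical regime.
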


\begin{proof}
	In  \cite{MR3374960}*{(4.19)}, the following pointwise estimate for $N$
	is obtained:
	\begin{equation}\label{C.0}
	N(a,\xi(y,\om))\ls (1+\abs{y})(\abs{\xi(y,\om)}^2+\abs{\grad\xi(y,\om)}^2+\abs{\grad^2\xi(y,\om)}^2).
	\end{equation}
	By a rescaling argument,
	in principle the implicit constant can depend on $a$.
	But for all $\abs{a-1/2}\ll1$,
	one can make the constant uniform in $a$. 
	
	The claim now is that \eqref{A.3} holds with $(m,n)=(2,0)$.
	Indeed, this gives the worst bound of all such projections,
	and only this projection and the one with $(m,n)=(1,0)$ 
	are needed in the previous sections, and all 
	other estimates follow the same argument.
	
	Let $u=\abs{\xi}^2+\abs{\grad\xi}^2+\abs{\grad^2\xi}^2$. Recall the definition of the function $\Si^{(2,0)(i,j,0)}$
	from \eqref{2.8}.
	For $s=2$, integrating \eqref{C.0} over the cylinder against 
	the Gaussian measure, we find
	\begin{equation}\label{C.0.1}
\begin{aligned}
			\norm{P^{(2,0)}N}_{0,a}&=\sum_{i,j=1}^{n-k}\abs{\int N\Si^{(2,0)(i,j,0)}(a)\rho_a}\\
			&\ls \int (\abs{y}^2+\abs{y}^3)u\rho_a\\
			&\le \sup_{\Rb^{n-k}\times \Sb^k} (\abs{y}^2+\abs{y}^3)\rho_{a-1/2}(y,\om)\int u\rho_{1/2}. 
\end{aligned}
	\end{equation}
	For $a-1/2\ge\delta,$ the prefactor in the last line can be made uniform,
	and one can conclude the claim from here.

For $s\ge3$, one can differentiate the point-wise estimate \eqref{C.0},
replace the second line of \eqref{C.0.1} with the result, and then proceed as above.
\end{proof}

	From the proof above one can see what prevents an
estimate of the form $\norm{N}_{s-2,a}\ls \norm\xi_s^2$,
which is already weaker than an estimate of the form $\norm{N}_{s-2}\ls \norm\xi_s^2$.
Indeed, take an orthonormal basis of the Hilber space 
$X^s(a)$. Then this consists of
a sequence of functions that has
increasing asymptotic growth.
For instance, in the proof of \lemref{lemC1} above,
we consider the basis of $X^s(a)$ given by the 
eigenfunctions of the self-adjoint operator $L(a)$,
whose growth are dominated by the Hermite polynomials.
The latter have growing orders of polynomial growth.
Thus, one cannot proceed from the second to the third line in \eqref{C.0.1}
if one sums over all projections.

The following lemma is needed to derive the Lipschitz 
estimate \eqref{5.9} in \secref{sec:5}.
\begin{lemma}
	For $s\ge2$, define 
	 $$V(\delta_1):=\Set{(a,\xi)\in \Rb\times X^s:\norm{\xi}_s\le \delta_1,0<a<1}$$ for every sufficiently small $0<\delta_1\ll a^{-1/2}$.
	Then for any two paths $(a^{(m)},v^{(m)})\in U(\delta),\,m=1,2$, there holds the Lipschitz estimate 
	\begin{equation}
		\label{C.2}
		\norm{N(a^{(1)},\xi^{(1)})-N(a^{(0)},\xi^{(0)})}_{s-2}\ls\delta_1 \del{\abs{a^{(1)}-a^{(0)}}+\norm{\xi^{(1)}-\xi^{(0)}}_s}.
	\end{equation}
\end{lemma}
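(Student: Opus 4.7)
The plan is to reduce the estimate to two one-parameter bounds via the standard telescoping
\begin{equation*}
N(a^{(1)},\xi^{(1)}) - N(a^{(0)},\xi^{(0)}) = \bigl[N(a^{(1)},\xi^{(1)}) - N(a^{(1)},\xi^{(0)})\bigr] + \bigl[N(a^{(1)},\xi^{(0)}) - N(a^{(0)},\xi^{(0)})\bigr],
\end{equation*}
and to estimate each bracket by the fundamental theorem of calculus along a straight-line path. The two structural facts I would lean on, both immediate from the defining identity $N(a,\xi) = F_a'(\sqrt{k/a}+\xi) - L(a)\xi$, are that $N(a,0) = 0$ for every $a>0$ (since $v \equiv \sqrt{k/a}$ is a critical point of $F_a$ and $L(a)\cdot 0 = 0$) and that $D_\xi N(a,0) = 0$ (this is precisely how $L(a)$ is defined as the linearization of $F_a'$ at the cylinder). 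Combined with the explicit formulae \eqref{N}-\eqref{N1}, these vanishings will supply the $\delta_1$ prefactor on the right-hand side of \eqref{C.2}.

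For the first bracket I would write
\begin{equation*}
N(a^{(1)},\xi^{(1)}) - N(a^{(1)},\xi^{(0)}) = \int_0^1 D_\xi N\bigl(a^{(1)},\xi_t\bigr)[\xi^{(1)}-\xi^{(0)}]\,dt, \qquad \xi_t := (1-t)\xi^{(0)} + t\xi^{(1)},
\end{equation*}
and establish the operator bound $\norm{D_\xi N(a,\zeta)}_{s\to s-2} \ls \norm{\zeta}_s$ uniformly on $V(\delta_1)$. Since $D_\xi N(a,0) = 0$, one further Taylor expansion gives $D_\xi N(a,\zeta) = \int_0^1 D^2_\xi N(a,r\zeta)[\zeta,\cdot]\,dr$, so it suffices to show that $D^2_\xi N$ is uniformly bounded on $V(\delta_1)$. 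For the second bracket I would use
\begin{equation*}
N(a^{(1)},\xi^{(0)}) - N(a^{(0)},\xi^{(0)}) = (a^{(1)} - a^{(0)})\int_0^1 \partial_a N\bigl(a_t,\xi^{(0)}\bigr)\,dt, \qquad a_t := (1-t)a^{(0)} + ta^{(1)},
\end{equation*}
and then extract the factor $\delta_1$ from $\partial_a N(a,\xi^{(0)})$ by invoking $\partial_a N(a,0) = 0$ (a consequence of $N(a,0) \equiv 0$) and one more Taylor step in $\xi$, reducing matters to a uniform bound on $D_\xi\partial_a N$ over $V(\delta_1)$.

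The main technical obstacle is then the purely calculational step of verifying that $D^2_\xi N$ and $D_\xi \partial_a N$ map $V(\delta_1)$ boundedly into the target operator spaces. The favourable feature of the region $V(\delta_1)$ is that, for $\delta_1 \ll a^{-1/2}$, one has the uniform lower bound $v = \sqrt{k/a} + \xi \ge \sqrt{k/a} - \delta_1 \gtrsim 1$, while the gradient denominator $1 + \abs{\grad_y v}^2 + v^{-2}\abs{\grad_\om v}^2 \ge 1$ automatically; hence every quotient appearing in \eqref{N}-\eqref{N1} is smoothly invertible and $N$ is a smooth composition in its arguments. Differentiating \eqref{N1} once in $\xi$ produces a polynomial of degree $\le 2$ in $(\xi,\grad\xi,\grad^2\xi)$ multiplied by smooth bounded functions of the same arguments, and one further differentiation merely raises the polynomial degree by one; differentiation in $a$ only hits the prefactors $\sqrt{k/a}$ and $v^{-j}$, yielding expressions of the same overall type. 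The remaining bounds are standard Moser-type product inequalities in the Gaussian-weighted space $X^{s-2}$, valid for $s \ge 2$ in the sense required by the paper. I expect the bookkeeping of these product estimates -- in particular keeping track of the polynomial weights inherent in the Gaussian measure on $\Rb^{n-k}$ so that the final bound involves $\norm{\cdot}_s$ rather than a larger norm -- to be the only genuinely laborious step.
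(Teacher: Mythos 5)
Your proposal follows essentially the same route as the paper's proof: the same telescoping in $(a,\xi)$, the same two vanishing facts ($N(a,0)=0$ and $\di_\xi N(a,0)=0$, the latter being precisely how $L(a)$ is defined as the linearization of $F_a'$), and the same reduction of the $\delta_1$-gain to boundedness of second-order Fr\'echet derivatives of $N$ on the small ball. The only real divergence is in the $a$-increment: you use the fundamental theorem of calculus in $a$ together with $\di_a N(a,0)=0$ and a bound on the mixed derivative $\di_\xi\di_a N$, whereas the paper isolates the leading terms \eqref{C.5} of the explicit expression \eqref{N} and checks by hand that they factor as (Lipschitz in $a$) times $O(\norm{\xi}_s)$, which is where the restriction $0<a<1$ enters; the two treatments are interchangeable. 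One caution: you justify the remaining derivative bounds by ``standard Moser-type product inequalities'' in $X^{s-2}$, but the paper explicitly warns (in the discussion surrounding \eqref{C.0.1}) that the Gaussian-weighted Sobolev spaces are not algebras and that even the quadratic estimate $\norm{N(a,\xi)}_{s-2}\ls\norm{\xi}_s^2$ fails there; the paper's own argument instead leans on the $C^2$ regularity of the $F$-functional and the explicit rational structure of $N$ on the region where $v=\sqrt{k/a}+\xi$ is bounded below. So the boundedness of $D^2_\xi N$ and $D_\xi\di_a N$ on $V(\delta_1)$, which you rightly identify as the laborious step, should not be dispatched by generic product estimates; this is also the tersest point of the paper's own proof.
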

\begin{proof}
	Consider the trivial inequality
	\begin{multline}\label{C.3}
		\norm{N(a^{(1)},v^{(1)})-N(a^{(0)},v^{(0)})}_{s-2}\\\ls 
		\norm{N(a^{(1)},v^{(1)})-N(a^{(1)},v^{(0)})}_{s-2}+\norm{N(a^{(1)},v^{(0)})-N(a^{(0)},v^{(0)})}_{s-2}.
	\end{multline}
	We bound the two terms in the r.h.s. respectively.

	To bound the first term, consider the explicit expression \eqref{N} above.
	From here we see that for fixed $a$, the map $N(a,\cdot)$ is a rational function of $v^{-k},\,k=0,\ldots, 4$ and the derivatives $\di^\al_y\di^\beta_\om v,\,
	\abs{\al}+\abs{\beta}\le 2$.
	Hence, for $s\ge 2$, 
	the map $N(a,\cdot):V(\delta_1)\to X^{s-2}$ is a smooth
	 by the uniform bound \eqref{A.3}
	proven earlier. 
	The claim now is that  
	the partial Fr\'echet derivative $\di_\xi N(a,0): X^s\to X^{s-2}$ vanishes.
	If this holds, then 
	we have
	\begin{equation}\label{C.4}
		\norm{N(a^{(1)},v^{(1)})-N(a^{(1)},v^{(0)})}_{s-2}\ls \delta_1 \norm{\xi^{(1)}-\xi^{(0)}}_s,
	\end{equation}
	which follows from the mean value theorem for Fr\'echet
	differentiable maps, c.f. \cite{MR1225101}*{Thm. 1.8}.
	
	To see $\di_\xi N(a,0)=0$, we differentiate the expansion $N=F'_a-L(a)$ w.r.t. $\xi$
	to obtain
	$\di_\xi N(a,\cdot)=dF_a'(\cdot)-L(a)$, where $dF_a'$ 
	is the Hessian
	of the $F$-functional defined in \eqref{F}. By the $C^2$ regularity of $F_a$, 
	(say, from the analysis of  the Hessian in \cite{MR2993752}*{Sect. 4}),
	one can evaluate this expression at a vanishing sequence of $\xi_n$
	in $X^s$ to get the claim. Thus \eqref{C.4} follows.

	To bound the second term in the r.h.s. of  \eqref{C.3}, 
	we need to use the expression \eqref{N}.
	The point here is that for fixed $\xi$ with $\norm{\xi}_2\ll1$,
	the leading order term of $N(\cdot, \xi)$ lies in the first five terms from \eqref{N},
	namely 
	\begin{equation}
		\label{C.5}
		\left(\frac{a}{k}-v^{-2}\right)\Lap_\om \xi
		-av+kv^{-1}+2a\xi.
	\end{equation}
	The claim is that the $X^{s-2}$-norm
	of  this expression can be bounded by some product of a Lipschitz
	function in $a$ and a function of the order $O(\norm{\xi}_s)$.
	If this the case, then it follows from the assumption $\norm{\xi}\le \delta_1$
	that 
		\begin{equation}\label{C.6}
		\norm{N(a^{(1)},v^{(0)})-N(a^{(0)},v^{(0)})}_{s-2}\ls \delta_1\abs{a^{(1)}-a^{(0)}}.
	\end{equation}

	To see \eqref{C.5} has the claimed property, we find
	\begin{align*}
		\norm{\left(\frac{a}{k}-v^{-2}\right)\Lap_\om \xi}_{s-2}&\ls \norm{\xi}_s a,\\
		\norm{-av+kv^{-1}+2a\xi}_s&\ls (\norm{\xi}_s +\norm{\xi}_s ^2)(a+a^{3/2}).
	\end{align*}
	This follows by plugging $v=\sqrt{k/a}+\xi$ into \eqref{C.5} and rearranging. 
	For the second line above to be Lipschitz, we need the assumption $0<a<1$.
	
	Combining \eqref{C.3}-\eqref{C.5} gives \eqref{C.2}.

\end{proof}

\begin{proof}[Proof of \eqref{5.9}]
			Now, for paths $U^m\in \Ac_\delta,\,m=1,2$, 
	define two paths $(a^{(m)},\xi^{(m)})$ in the obvious way. 
	By \eqref{3.2},
	at each $\tau$ we find $$(a^{(m)},\xi^{(m)})\in V(\delta\br{\tau}^{-2}).$$
	This gives \eqref{5.9}.
\end{proof}

		By the same argument, using \eqref{A.3'}, one can get a similar estimate as \eqref{5.9} for the nonlinear map $\tilde N$ in \eqref{tildeN}.

\bibliography{bibfile}
\end{document}